\newlength{\defbaselineskip}
\theoremstyle{plain}
\newtheorem{theorem}{Theorem}[section]
\newtheorem{proposition}[theorem]{Proposition}
\newtheorem{corollary}[theorem]{Corollary}
\newtheorem{lemma}[theorem]{Lemma}
\theoremstyle{definition}
\newtheorem{definition}[theorem]{Definition}
\newtheorem{remark}[theorem]{Remark}
\newtheorem{conjecture}[theorem]{Conjecture}
\newcommand{\ZZ}{\mathbb{Z}}
\newcommand{\QQ}{\mathbb{Q}}
\newcommand{\gchoose}[2]{\left[\begin{array}{c}{#1 } \\{#2} \end{array}\right]  }
\newcommand{\nuisone}{1}
\newcommand{\newr}{{ r}}
\newcommand{\news}{{ \xi}}
\newcommand{\newt}{{ \omega}}
\newcommand{\ralf}{}
\begin{document}

\title{Positivity for cluster algebras}
\author{Kyungyong Lee}
\address{Department of Mathematics, Wayne State University, Detroit, MI 48202 and
Center for Mathematical Challenges, Korea Institute for Advanced Study, 85 Hoegiro, Dongdaemun-gu, Seoul 130-722 Republic of Korea}
\email{klee@math.wayne.edu}
\author{Ralf Schiffler} 
\address{Department of Mathematics, University of Connecticut,  Storrs, CT 06269-3009}
\email{schiffler@math.uconn.edu}
\thanks{{The first author is  partially
   supported by the NSF grant DMS 0901367. The second author is partially
   supported by the NSF grants DMS-1001637 and DMS-1254567.}}

\subjclass[2010]{13F60}
\date{\today} 
\dedicatory{To the memory of Andrei Zelevinsky     }

\keywords{cluster algebra, positivity conjecture}
\maketitle
\begin{abstract}  We prove the positivity conjecture for all skew-symmetric cluster algebras.
\end{abstract}

\section{Introduction}\label{sect intro}

Cluster algebras have been introduced by Fomin and Zelevinsky in \cite{FZ} in the context of total positivity and canonical bases in Lie theory. Since then cluster algebras have been shown to be related to various fields in mathematics including representation theory of finite dimensional algebras, Teichm\"uller theory, Poisson geometry, combinatorics, Lie theory, tropical geometry and mathematical physics.

A cluster algebra is a subalgebra of a field of rational functions in $N$ variables $x_1,x_2,\ldots,x_N$, given by specifying a set of generators, the so-called \emph{cluster variables}. These generators are constructed in a recursive way, starting from the initial variables $x_1,x_2,\ldots,x_N$, by a procedure called \emph{mutation}, which is determined by the choice of a skew-symmetric $N\times N$ integer matrix $B$ or, equivalently, by a quiver $Q$.
Although each mutation is an elementary operation, it is very difficult to compute cluster variables in general because of the recursive character of the construction.

Finding explicit computable direct formulas for the cluster variables is one of the main open problems in the theory of cluster algebras and has been studied by many mathematicians. In 2002, Fomin and Zelevinsky showed that every cluster variable is a Laurent polynomial in the initial variables $x_1,x_2,\ldots, x_N$, and they conjectured that this Laurent polynomial has positive coefficients \cite{FZ}.

 This \emph{positivity conjecture} has been proved in the following special cases
 \begin{itemize}
 \item \emph{Acyclic cluster algebras.} These are cluster algebras given by a quiver that is mutation equivalent to a quiver without oriented cycles. In this case, positivity has been shown in \cite{KQ}, building on \cite{BZ,HL,N,Q}, using monoidal categorifications of quantum cluster algebras and  perverse sheaves over graded quiver varieties. The bipartite case has been shown first in \cite{N}.  In the special case where the initial seed is acyclic, a different proof has been given later in \cite{Ef}  using Donaldson-Thomas theory. Very recently, after our proof of positivity was available, this approach has also been used to prove positivity for all rank 4 cluster algebras in \cite{DMSS}.
\item \emph{Cluster algebras from surfaces.}  In this case, positivity has been shown in \cite{MSW} building on \cite{S2,ST,S3}, using the fact that each cluster variable in such a cluster algebra corresponds to a curve in an oriented Riemann surface, and the Laurent expansion of the cluster variable is determined by the crossing pattern of the curve with a fixed triangulation of the surface \cite{FG,FST}. The construction and the proof of the positivity conjecture have been generalized to non skew-symmetric cluster algebras from orbifolds in \cite{FeShTu}.
\end{itemize}

Our approach in this paper is different. We prove positivity almost exclusively by elementary algebraic computation. The advantage of this approach is that we do not need to restrict to a special type of cluster algebras. Our main result is the following.

\begin{theorem} \label{thm 1.1} The positivity conjecture holds in every skew-symmetric  cluster algebra.
\end{theorem}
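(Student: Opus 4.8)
The plan is to prove positivity by establishing an explicit combinatorial formula for the Laurent expansion of every cluster variable and then observing that this formula manifestly has nonnegative coefficients. I would first reduce to a convenient situation: by the Laurent phenomenon of Fomin–Zelevinsky, every cluster variable is a Laurent polynomial in a fixed initial seed, so it suffices to prove positivity for the Laurent expansion of an arbitrary cluster variable $x$ with respect to one initial seed, which we may take to be the seed from which the mutation sequence producing $x$ emanates. A further standard reduction allows me to assume that the quiver $Q$ has no frozen vertices and to work one cluster variable at a time, so the problem becomes: show that the numerator arising in the expansion
\begin{equation}
x \;=\; \frac{F(x_1,\ldots,x_N)}{x_1^{d_1}\cdots x_N^{d_N}}
\end{equation}
is a polynomial with nonnegative integer coefficients, where the denominator exponents $d_i$ are read off from the $g$-vector or $d$-vector data.

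The heart of the approach is to produce a closed combinatorial expression for $F$. The key step is to set up a recursion for cluster variables obtained by a sequence of mutations and to solve it by encoding the solution as a sum over certain lattice points or compatible sets of combinatorial objects — in the surface case these are perfect matchings of snake graphs, and the strategy here is to find an analogous purely combinatorial index set valid for \emph{all} skew-symmetric seeds. Concretely, I would attach to the mutation sequence a family of polynomials governed by the exchange relations $x_k' x_k = \prod_{b_{ik}>0} x_i^{b_{ik}} + \prod_{b_{ik}<0} x_i^{-b_{ik}}$, and then prove by induction on the length of the mutation sequence that the Laurent numerator equals a generating function of the form $\sum_{S} \mathbf{x}^{\,\mathbf{e}(S)}$ summed over a set $S$ of combinatorial data, with each monomial appearing with coefficient exactly $1$ (or at least a nonnegative integer). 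Positivity is then immediate from the shape of the formula, since it is a sum of monomials with coefficients in $\mathbf{N}$.

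The main obstacle — and where the real work lies — is establishing that the proposed combinatorial formula actually satisfies the cluster mutation recursion, i.e. verifying the inductive step. This amounts to showing that when one mutates, the generating function transforms correctly: the product $x_k' x_k$ must reproduce the binomial exchange relation, and crucially all the \emph{a priori} possibly-negative contributions arising from clearing denominators and from the subtraction implicit in passing between adjacent clusters must cancel or reorganize into a manifestly positive sum. I expect this to require a delicate combinatorial bijection or a sign-reversing involution on the index set that cancels the negative terms, together with careful bookkeeping of the exponent vectors $\mathbf{e}(S)$ to confirm they match the exponents dictated by the exchange relation. Controlling this cancellation uniformly across \emph{all} skew-symmetric quivers, rather than exploiting special structure (acyclicity or a surface model), is precisely what makes the elementary-but-intricate algebraic computation advertised in the introduction unavoidable, and it is the step I would budget the most effort for.
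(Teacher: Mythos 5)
There is a genuine gap here, and it sits exactly where you locate "the real work": you never produce the combinatorial index set $S$, the exponent map $\mathbf{e}(S)$, or any candidate closed formula for a general skew-symmetric seed. For surfaces such an index set exists (perfect matchings of snake graphs), and for acyclic types the known formula is indexed by points of quiver Grassmannians --- but the latter is \emph{not} manifestly positive, which is precisely why positivity was open. For an arbitrary skew-symmetric quiver no manifestly positive global combinatorial formula was (or is) available, so the plan "write $F$ as $\sum_S \mathbf{x}^{\mathbf{e}(S)}$ and verify the exchange recursion by a sign-reversing involution" is not a reduction of the problem but a restatement of it. As written, the proposal cannot be carried out without first solving the very problem it is meant to solve.

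The paper's route is structurally different and avoids this obstacle. Manifestly positive formulas are only ever used for \emph{rank two} mutation subsequences: these are the compatible-pair sums over maximal Dyck paths (Theorem~\ref{mainthm12052011}) and the binomial-product formula (Theorem~\ref{cor 26}), both imported from the rank $2$ and rank $3$ work. The general case is then handled by induction on the number of rank two subsequences of the mutation path: Theorem~\ref{thm1} writes the cluster variable as a sum $A_t+B_t+C_t+D_t$ of four Laurent polynomials with nonnegative coefficients in the variables of four clusters near $\mathbf{x}_t$, where only the variables common to all four clusters may carry negative exponents. The technical heart is not a bijection or involution but a long sequence of explicit inequalities (e.g.\ the chain culminating in equation~(\ref{eq 44.1}) and the divisibility statements behind Proposition~\ref{cor2.17}) showing that after substituting the rank two expansions, the exponents of the remaining variables are nonnegative. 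If you want to pursue your strategy you would need to either exhibit the missing index set for general quivers or replace that step with something like the paper's decomposition; the inductive "verify the recursion" step is not where the difficulty can be deferred to, because without the formula there is nothing to verify.
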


Our argument provides a method for the computation of the Laurent expansions of cluster variables, and some examples of explicit computations were given in our earlier work \cite{LS3}.
We point out that
direct formulas for the Laurent polynomials have been obtained earlier in several special cases. The most general results are the following:
\begin{itemize}
\item a formula involving the Euler-Poincar\'e characteristic of quiver Grassmannians obtained in \cite{FK,DWZ2} using categorification and generalizing results in \cite{CC,CK1}. While this formula shows a very interesting connection between cluster algebras and geometry, it is of limited computational use, since the Euler-Poincar\'e characteristics of quiver Grassmannians are hard to compute. In particular, this formula does not show positivity. On the other hand, the positivity result in this paper proves the positivity of the Euler-Poincar\'e characteristics of the quiver Grassmannians involved, see section~\ref{sect Grass}.
\item an elementary combinatorial formula for cluster algebras from surfaces given in \cite{MSW}.
\item a formula for cluster variables corresponding to string modules as a product of $2\times 2$ matrices obtained in \cite{ADSS}, generalizing a result in \cite{ARS}.
\end{itemize}

The main tools of the proof of Theorem \ref{thm 1.1} are modified versions of two formulas for the rank two case, one obtained by the first author in \cite{L} and the other obtained by both authors in \cite{LS}. These formulas allow for the computation of  the Laurent expansions of a given cluster variable with respect to any seed close enough to the variable, in the sense that there is a sequence of mutations $\mu_d,\mu_e,\mu_d,\mu_e\ldots$ using only two directions $d$ and $e$ which links seed and variable. The general result then follows by inductive reasoning.

 {\ralf We actually show the stronger result, Theorem \ref{thm1}, that for every cluster variable $u$ and for every cluster $\mathbf{x}$, there exists a connected rank 2 subtree $\mathbb{T}_2$ of the exchange tree containing $\mathbf{x}$ 
 such that $u$  can be expressed as a sum of four positive Laurent polynomials in the variables of four clusters closest  to $\mathbf{x}$ in $\mathbb{T}_2$, and such that the variables that are not contained in all four clusters appear only with positive powers. Because of rank 2 positivity, this implies that $u$ is a positive Laurent polynomial in every cluster in $\mathbb{T}_2$. 
 
 The proof of Theorem \ref{thm1} is by induction on the number of rank 2 mutation subsequences of the mutation sequence from $u$ to $\mathbf{x}$.
 It uses two different rank 2 formulas, to express cluster variables that are two rank 2 sequences away from $\mathbf{x}$ as Laurent polynomials in four clusters including $\mathbf{x}$. First we compute the Laurent expansion $\mathcal{L}_1$ of the cluster variable after one rank 2 mutation sequence as a sum of two Laurent polynomials in two adjacent clusters such that the variables that are not contained in both clusters appear only with positive powers. 
 Then we compute the Laurent expansions in four clusters including $\mathbf{x} $ of all cluster variables appearing in $\mathcal{L}_1$ using the second rank 2 sequence and then substitute these in $\mathcal{L}_1$. We then show that the variables  that are not contained in all four clusters appear only with positive powers.
 }

 If the cluster algebra is not skew-symmetric, it is shown in \cite{R,LLZ} that (an adaptation of)  the second rank two formula still holds. We therefore expect that our argument can be generalized to prove the positivity conjecture for non skew-symmetric cluster algebras.
A non-commutative version of the formula has been given in \cite{LS2,R}.

The article is organized as follows. We start by recalling some definitions and results from the theory of cluster algebras in section \ref{sect 2}. 
{\ralf In section \ref{sect rank 2}, we study mutation sequences of rank 3 quivers, and we recall the definition of compatible pairs as well as  results from  our previous work \cite{LS3} in section \ref{sect rank 2}. The positivity conjecture is proved, using Theorem~\ref{thm1}, in  section \ref{sect 3},  and Theorem~\ref{thm1} is proved in section \ref{sect proof} following the outline above. }
As an application, we  show that certain quiver Grassmannians have positive Euler-Poincar\'e characteristic in section \ref{sect Grass}. 

\noindent \emph{Acknowledgements.} We are grateful to  Giovanni Cerulli Irelli and Daniel Labardini-Fragoso for pointing out to us that it is not necessary to assume that the cluster algebra is of geometric type.
We also thank Frank Ban, Sergey Fomin, Andrea Gatica, Li Li, Hiraku Nakjima, Hugh Thomas and Lauren Williams for suggesting improvements to the presentation of the article. We are especially grateful to the anonymous referees for making many valuable suggestions.

\section{Cluster algebras}\label{sect 2}
In this section, we review some notions from the theory of cluster algebras 
  introduced by Fomin and Zelevinsky in \cite{FZ}.
Our definition follows the exposition in \cite{FZ4}.

To define  a cluster algebra~$\mathcal{A}$ we must first fix its
ground ring.
Let $(\mathbb{P},\oplus, \cdot)$ be a \emph{semifield}, i.e.,
an abelian multiplicative group endowed with a binary operation of
\emph{(auxiliary) addition}~$\oplus$ which is commutative, associative, and
distributive with respect to the multiplication in~$\mathbb{P}$.
The group ring~$\ZZ\mathbb{P}$ will be
used as a \emph{ground ring} for~$\mathcal{A}$.

One important choice for $\mathbb{P}$ is the tropical semifield; in this case we say that the
corresponding cluster algebra is of {\it geometric type}.
Let $\textup{Trop} (u_1, \dots, u_{m})$ be an abelian group (written
multiplicatively) freely generated by the $u_j$.
We define  $\oplus$ in $\textup{Trop} (u_1,\dots, u_{m})$ by
\begin{equation}
\label{eq:tropical-addition}
\prod_j u_j^{a_j} \oplus \prod_j u_j^{b_j} =
\prod_j u_j^{\min (a_j, b_j)} \,,
\end{equation}
and call $(\textup{Trop} (u_1,\dots,u_{m}),\oplus,\cdot)$ a \emph{tropical
 semifield}.
Note that the group ring of $\textup{Trop} (u_1,\dots,u_{m})$ is the ring of Laurent
polynomials in the variables~$u_j\,$.

As an \emph{ambient field} for
$\mathcal{A}$, we take a field $\mathcal{F}$
isomorphic to the field of rational functions in $N$ independent
variables (here $N$ is the \emph{rank} of~$\mathcal{A}$),
with coefficients in~$\QQ \mathbb{P}$.
Note that the definition of $\mathcal{F}$ does not involve
the auxiliary addition
in~$\mathbb{P}$.

\begin{definition}
\label{def:seed}
A \emph{labeled seed} in~$\mathcal{F}$ is
a triple $(\mathbf{x}, \mathbf{y}, B)$, where
\begin{itemize}
\item
$\mathbf{x} = (x_1, \dots, x_N)$ is an $N$-tuple
from $\mathcal{F}$
forming a \emph{free generating set} over $\QQ \mathbb{P}$,
\item
$\mathbf{y} = (y_1, \dots, y_N)$ is an $N$-tuple
from $\mathbb{P}$, and
\item
$B = (b_{ij})$ is an $N\!\times\! N$ integer matrix
which is \emph{skew-symmetrizable}.
\end{itemize}
That is, $x_1, \dots, x_N$
are algebraically independent over~$\QQ \mathbb{P}$, and
$\mathcal{F} = \QQ \mathbb{P}(x_1, \dots, x_N)$.
We refer to~$\mathbf{x}$ as the (labeled)
\emph{cluster} of a labeled seed $(\mathbf{x}, \mathbf{y}, B)$,
to the tuple~$\mathbf{y}$ as the \emph{coefficient tuple}, and to the
matrix~$B$ as the \emph{exchange matrix}.
\end{definition}

We  use the notation
$[x]_+ = \max(x,0)$,
$[1,N]=\{1, \dots, N\}$, and
\begin{align*}
\textup{sgn}(x) &=
\begin{cases}
-1 & \text{if $x<0$;}\\
0  & \text{if $x=0$;}\\
 1 & \text{if $x>0$.}
\end{cases}
\end{align*}

\begin{definition}
\label{def:seed-mutation}
Let $(\mathbf{x}, \mathbf{y}, B)$ be a labeled seed in $\mathcal{F}$,
and let $k \in [1,N]$.
The \emph{seed mutation} $\mu_k$ in direction~$k$ transforms
$(\mathbf{x}, \mathbf{y}, B)$ into the labeled seed
$\mu_k(\mathbf{x}, \mathbf{y}, B)=(\mathbf{x}', \mathbf{y}', B')$ defined as follows:
\begin{itemize}
\item
The entries of $B'=(b'_{ij})$ are given by
\begin{equation}
\label{eq:matrix-mutation}
b'_{ij} =
\begin{cases}
-b_{ij} & \text{if $i=k$ or $j=k$;} \\[.05in]
b_{ij} + \textup{sgn}(b_{ik}) \ [b_{ik}b_{kj}]_+
 & \text{otherwise.}
\end{cases}
\end{equation}
\item
The coefficient tuple $\mathbf{y}'=(y_1',\dots,y_N')$ is given by
\begin{equation}
\label{eq:y-mutation}
y'_j =
\begin{cases}
y_k^{-1} & \text{if $j = k$};\\[.05in]
y_j y_k^{[b_{kj}]_+}
(y_k \oplus 1)^{- b_{kj}} & \text{if $j \neq k$}.
\end{cases}
\end{equation}
\item
The cluster $\mathbf{x}'=(x_1',\dots,x_N')$ is given by
$x_j'=x_j$ for $j\neq k$,
whereas $x'_k \in \mathcal{F}$ is determined
by the \emph{exchange relation}
\begin{equation}
\label{exchange relation}
x'_k = \frac
{y_k \ \prod x_i^{[b_{ik}]_+}
+ \ \prod x_i^{[-b_{ik}]_+}}{(y_k \oplus 1) x_k} \, .
\end{equation}
\end{itemize}
\end{definition}

We say that two exchange matrices $B$ and $B'$ are {\it mutation-equivalent}
if one can get from $B$ to $B'$ by a sequence of mutations. A sequence of mutations $\mu_d,\mu_e,\mu_d,\mu_e\ldots$ using only mutations in two directions $d$ and $e$ is called a rank 2 mutation sequence.
\begin{definition}
\label{def:patterns}
Consider the \emph{$N$-regular tree}~$\mathbb{T}_N$
whose edges are labeled by the numbers $1, \dots, N$,
so that the $N$ edges emanating from each vertex receive
different labels.
A \emph{cluster pattern}  is an assignment
of a labeled seed $\Sigma_t=(\mathbf{x}_t, \mathbf{y}_t, B_t)$
to every vertex $t \in \mathbb{T}_N$, such that the seeds assigned to the
endpoints of any edge $t {k\over\quad} t'$ are obtained from each
other by the seed mutation in direction~$k$.
The components of $\Sigma_t$ are written as:
\begin{equation}
\label{eq:seed-labeling}
\mathbf{x}_t = (x_{1;t}\,,\dots,x_{N;t})\,,\quad
\mathbf{y}_t = (y_{1;t}\,,\dots,y_{N;t})\,,\quad
B_t = (b^t_{ij})\,.
\end{equation}
\end{definition}

Clearly, a cluster pattern  is uniquely determined
by an arbitrary  seed.

\begin{definition}
\label{def:cluster-algebra}
Given a cluster pattern, we denote
\begin{equation}
\label{eq:cluster-variables}
\mathcal{X}
= \bigcup_{t \in\mathbb{T}_N} \mathbf{x}_t
= \{ x_{i;t}\,:\, t \in \mathbb{T}_N\,,\ 1\leq i\leq N \} \ ,
\end{equation}
the union of clusters from all seeds in the pattern.
The elements $x_{i;t}\in \mathcal{X}$ are called \emph{cluster variables}.
The
\emph{cluster algebra} $\mathcal{A}$ associated with a
given pattern is the $\ZZ \mathbb{P}$-subalgebra of the ambient field $\mathcal{F}$
generated by all cluster variables: $\mathcal{A} = \ZZ \mathbb{P}[\mathcal{X}]$.
We denote $\mathcal{A} = \mathcal{A}(\mathbf{x}, \mathbf{y}, B)$, where
$(\mathbf{x},\mathbf{y},B)$
is any seed in the underlying cluster pattern.
\end{definition}

The cluster algebra is called \emph{skew-symmetric} if the matrix $B$ is skew-symmetric. In this case, it is often convenient to represent the $N\times N$ matrix $B$ by a quiver $Q_B$ with vertices $1,2,\ldots, N$ and $[b_{ij}]_+$ arrows from vertex $i$ to vertex $j$.

In \cite{FZ}, Fomin and Zelevinsky proved the remarkable {\it Laurent phenomenon} and posed the following {\it positivity conjecture}.
\begin{theorem} [Laurent Phenomenon]
\label{Laurent}
 For any cluster algebra $\mathcal{A}$ and any seed $\Sigma_t$, each cluster variable $x$  is a Laurent polynomial over $\mathbb{ZP}$ in the cluster variables from $\mathbf{x}_t = (x_{1;t} , . . . , x_{N;t} )$.\end{theorem}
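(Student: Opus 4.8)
The plan is to fix the base seed $\Sigma_{t_0}=(\mathbf{x}_{t_0},\mathbf{y}_{t_0},B_{t_0})$ and to prove, by induction on the distance $d(t_0,t)$ in $\mathbb{T}_N$, that every cluster variable $x_{i;t}$ lies in the Laurent ring $L_0:=\ZZ\mathbb{P}[x_{1;t_0}^{\pm1},\ldots,x_{N;t_0}^{\pm1}]$. The cases $d(t_0,t)\le 1$ are immediate: at distance $0$ the cluster variables are the $x_{i;t_0}$ themselves, and at distance $1$ a single use of the exchange relation \eqref{exchange relation} produces a binomial in the $x_{i;t_0}$ divided by the single variable $x_{k;t_0}$, hence an element of $L_0$. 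The difficulty in the inductive step is that each further mutation introduces one more cluster variable in a denominator, and one must show that these denominators always cancel.

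The mechanism I would use is the Caterpillar Lemma of Fomin--Zelevinsky. After a harmless extension of scalars making the ground ring a factorial domain, $L_0$ is a unique factorization domain, so that membership $z\in L_0$ can be tested by verifying $\ord_p(z)\ge 0$ at every irreducible $p$. To reach a vertex $t$ one fixes two directions $i\neq j$ and walks along the corresponding rank-two subtree (the spine), attaching to each spine vertex legs for the remaining $N-2$ directions; the inductive hypothesis then supplies Laurentness at the seeds lying one step closer to $t_0$. The legs guarantee that $z:=x_{i;t}$ has no pole along the variables outside the two spine directions, while the possible poles along the two spine variables $x_{i;t_0}$ and $x_{j;t_0}$ are controlled by rewriting these variables through their exchange relations in the adjacent spine seeds. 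The crucial input is that the exchange polynomials attached to consecutive edges of the spine are coprime in $L_0$; this coprimality, together with the binomial form of the exchange polynomials (each involving neither of the two spine variables), forces the apparent poles to cancel and yields $z\in L_0$.

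The main obstacle, and the technical heart of the argument, is establishing the coprimality statements on which these cancellations rest. Coprimality cannot be checked in isolation, because rewriting an exchange polynomial in terms of $\mathbf{x}_{t_0}$ already presupposes the Laurent property; one must therefore run a \emph{simultaneous} induction along the caterpillar-shaped subtrees, propagating Laurentness and pairwise coprimality of adjacent exchange polynomials together. I would verify the coprimality in the base rank-two situations directly from the explicit binomial shape of the exchange relations, and then show that a single mutation preserves both properties. A secondary point to dispatch is the role of the coefficient semifield: since the factors $y_k$ and $y_k\oplus 1$ live in $\mathbb{P}$ rather than in the factorial ring, one must track them as units of $\ZZ\mathbb{P}$ so that they do not interfere with the order computations at the primes $x_{i;t_0}$, and one must check that the reduction to a factorial ground ring loses no generality.
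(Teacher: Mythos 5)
This statement is not proved in the paper at all: it is the classical Laurent phenomenon of Fomin--Zelevinsky, quoted from \cite{FZ} as background (the paper's contribution is positivity, Theorem \ref{thm 1.1}, which is logically independent of and builds on top of Laurentness). So there is no in-paper proof to compare against; the relevant comparison is with the original argument in \cite{FZ}, and your outline is a recognizable and essentially faithful sketch of exactly that argument: induction on distance in $\mathbb{T}_N$, reduction to a factorial ground ring so that membership in $\ZZ\mathbb{P}[x_{1;t_0}^{\pm1},\ldots,x_{N;t_0}^{\pm1}]$ can be tested prime by prime, the Caterpillar Lemma with legs killing poles in the non-spine directions, and coprimality of adjacent exchange binomials forcing cancellation of the remaining poles. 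You also correctly flag the two delicate points: that the coprimality must be propagated \emph{simultaneously} with Laurentness along the caterpillar (it cannot be checked in isolation), and that $y_k$ and $y_k\oplus 1$ must be treated as units of $\ZZ\mathbb{P}$. Two caveats. First, what you have written is a plan, not a proof: the step you yourself identify as the technical heart --- verifying coprimality in the base rank-two cases and showing it is preserved under a single mutation --- is only announced, and that is where all the work in \cite{FZ} lies. Second, the phrase ``walks along the corresponding rank-two subtree'' is a slight misdescription: the spine of the caterpillar in the Fomin--Zelevinsky argument is the actual mutation path from $t_0$ to $t$, which may use all $N$ directions; only the local three-edge configuration in the key lemma is governed by a pair of directions. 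Neither caveat is a wrong turn, but as it stands the proposal establishes the theorem only modulo the coprimality machinery it defers.
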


\begin{conjecture}[Positivity Conjecture] For any cluster algebra $\mathcal{A}$, any seed $\Sigma_t$, and any cluster variable $x$, the Laurent polynomial  expansion of $x$ in the cluster $\mathbf{x}_t$ has coefficients which are nonnegative integer linear combinations of elements in $\mathbb{P}$.
	\end{conjecture}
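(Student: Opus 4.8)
The plan is to reduce the positivity conjecture for an arbitrary skew-symmetric cluster algebra to the combination of two facts: first, the already-known positivity in the rank 2 case, and second, the structural statement announced as Theorem~\ref{thm1}, which says that any cluster variable $u$ can be written, relative to any cluster $\mathbf{x}$, as a sum of four positive Laurent polynomials in the variables of the four clusters closest to $\mathbf{x}$ inside a suitable connected rank 2 subtree $\mathbb{T}_2$ of the exchange tree. Granting Theorem~\ref{thm1}, I would argue as follows. Fix a seed $\Sigma_t$ with cluster $\mathbf{x}=\mathbf{x}_t$ and fix a cluster variable $u$. Apply Theorem~\ref{thm1} to obtain the connected rank 2 subtree $\mathbb{T}_2$ containing $\mathbf{x}$ and the four clusters $\mathbf{x}^{(1)},\dots,\mathbf{x}^{(4)}$ closest to $\mathbf{x}$ in $\mathbb{T}_2$, together with the expression
\begin{equation}
\label{eq:decomp}
u = \sum_{i=1}^{4} P_i,
\end{equation}
where each $P_i$ is a positive Laurent polynomial in the variables of $\mathbf{x}^{(i)}$, and the variables \emph{not} lying in all four clusters occur only with nonnegative exponents.

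The key step is then to pass from the four neighboring clusters back to $\mathbf{x}$. The clusters $\mathbf{x}^{(1)},\dots,\mathbf{x}^{(4)}$ all lie in the rank 2 subtree $\mathbb{T}_2$, so each of them differs from $\mathbf{x}$ only by a rank 2 mutation sequence in two fixed directions $d$ and $e$. By the known rank 2 positivity result, each of the finitely many cluster variables appearing in the $\mathbf{x}^{(i)}$ that are \emph{not} already variables of $\mathbf{x}$ expands as a positive Laurent polynomial in the variables of $\mathbf{x}$. I would substitute these positive Laurent expansions into \eqref{eq:decomp}. The crucial point guaranteeing that positivity survives the substitution is the side condition in Theorem~\ref{thm1}: the variables that are not common to all four clusters appear in each $P_i$ only with positive powers. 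Substituting a positive Laurent polynomial raised to a nonnegative integer power again yields a positive Laurent polynomial, and products and sums of positive Laurent polynomials are positive; multiplication by the common variables (which may carry negative exponents but enter the final Laurent expression uniformly) only shifts the monomial support without affecting the signs of coefficients. Hence each substituted $P_i$ becomes a positive Laurent polynomial in $\mathbf{x}$, and so does their sum $u$.

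Thus $u$ is a positive Laurent polynomial in the cluster $\mathbf{x}=\mathbf{x}_t$, which is exactly the assertion of the positivity conjecture. Since $\Sigma_t$ and $u$ were arbitrary, the conjecture holds for the whole skew-symmetric cluster algebra $\mathcal{A}$, and by the reduction remarks we may take $\mathbb{P}$ to be a general semifield rather than only the tropical one. The genuine content, and the step I expect to be the main obstacle, is of course Theorem~\ref{thm1} itself — the present argument is essentially a bookkeeping reduction. In particular, the delicate part being imported from Theorem~\ref{thm1} is not merely that $u$ decomposes as a sum of Laurent polynomials, but that one can arrange the decomposition so that the non-common variables occur with \emph{nonnegative} exponents; it is precisely this control over the exponents that makes positivity robust under the rank 2 substitution, and securing it is where the real work of the paper lies.
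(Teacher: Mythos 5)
Your proposal is correct and follows essentially the same route as the paper: after reducing to geometric type via the separation formula, the paper applies Theorem~\ref{thm1} at the seed at the far end of the maximal rank~2 subsequence ending at $t$, and then uses the positive rank~2 expansion formula (Theorem~\ref{mainthm12052011}) to expand the finitely many non-common cluster variables as positive Laurent polynomials in $\mathbf{x}_t$ before substituting, exactly as you describe. The only cosmetic difference is that you phrase the input in the informal ``four closest clusters'' language of the introduction, whereas the paper works with the explicit decomposition $u=A_t+B_t+C_t+D_t$ in the variables $x_{d;t},x_{e;t},\widetilde{x_{d;t}},\widetilde{x_{e;t}},\widetilde{\widetilde{x_{e;t}}}$ together with $(x_{f;t}^{\pm1})_{f\ne d,e}$.
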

	
Our main result is the proof of this conjecture for skew-symmetric cluster algebras.

\section{Expansion formulas}\label{sect rank 2}
In this section, we recall from \cite{LS3} how  to compute the  Laurent expansions of those cluster variables which are obtained from the initial cluster by a mutation sequence involving only two vertices. The main tools are the rank 2 formula from \cite{LS} (in the parametrization of \cite{LLZ}) and the rank 2 formula from \cite{L}.

Let $r$ be a positive integer and let $\{c_n^{[r]}\}_{n\in\mathbb{Z}}$ be the sequence  defined by the recurrence relation $$c_n^{[r]}=rc_{n-1}^{[r]} -c_{n-2}^{[r]},$$ with the initial condition $c^{[r]}_1=0$, $c^{[r]}_2=1$. 
For example, if $r=2$ then $c^{[r]}_n=n-1$;
if $r=3$, the sequence $c_n^{[r]}$ takes the following values:
\[\ldots,-3,-1,0,1,3,8,21,55,144,\ldots\]

\begin{lemma}\cite[Lemma 3.1]{LS3}
 \label{lem cn} Let $n\geq 3$. We have
 $c_{n-1}^{[r]}c_{n+k-3}^{[r]} - c_{n+k-2}^{[r]}c_{n-2}^{[r]}=c_k^{[r]}$ for $k\in \mathbb{Z}$. In particular, we have
 $(c_{n-1}^{[r]})^2 -c_n^{[r]}c_{n-2}^{[r]}=1$.
\end{lemma}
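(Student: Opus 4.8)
The statement is a standard determinant-type identity for a linear recurrence, so the natural plan is to treat the sequence $\{c_n^{[r]}\}$ via the matrix encoding its recurrence. Writing $M=\left(\begin{smallmatrix} r & -1 \\ 1 & 0\end{smallmatrix}\right)$, the recurrence $c_n^{[r]}=r\,c_{n-1}^{[r]}-c_{n-2}^{[r]}$ means that $\left(\begin{smallmatrix} c_{n}^{[r]} \\ c_{n-1}^{[r]}\end{smallmatrix}\right) = M \left(\begin{smallmatrix} c_{n-1}^{[r]} \\ c_{n-2}^{[r]}\end{smallmatrix}\right)$, so the $2\times 2$ matrix
\[
C_n = \begin{pmatrix} c_{n}^{[r]} & c_{n-1}^{[r]} \\ c_{n-1}^{[r]} & c_{n-2}^{[r]} \end{pmatrix}
\]
(which is symmetric because the off-diagonal entries coincide after shifting) satisfies $C_{n+1} = M\, C_n$. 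Since $\det M = 1$, I would conclude $\det C_n = \det C_2$, and a direct check of the initial values $c_1^{[r]}=0$, $c_2^{[r]}=1$, $c_3^{[r]}=r$ gives $\det C_3 = c_3^{[r]}c_1^{[r]}-(c_2^{[r]})^2 = -1$, so $(c_{n-1}^{[r]})^2 - c_n^{[r]}c_{n-2}^{[r]} = 1$ for all $n$. This handles the special case cleanly.

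For the general identity $c_{n-1}^{[r]}c_{n+k-3}^{[r]} - c_{n+k-2}^{[r]}c_{n-2}^{[r]} = c_k^{[r]}$, the cleanest route is again linear-algebraic. I would fix $n$ and view the left-hand side as a function of $k$; because $c_{n+k-2}^{[r]}$ and $c_{n+k-3}^{[r]}$ each satisfy the defining recurrence in the index $k$, their linear combination does too, so the left-hand side $f(k)$ satisfies $f(k)=r f(k-1)-f(k-2)$. The right-hand side $c_k^{[r]}$ satisfies the same recurrence by definition, so it suffices to match two consecutive values of $k$. Checking $k=1$ and $k=2$ reduces, after substituting the initial conditions $c_1^{[r]}=0$ and $c_2^{[r]}=1$, precisely to the special case $(c_{n-1}^{[r]})^2 - c_n^{[r]}c_{n-2}^{[r]}=1$ already established (together with a trivial evaluation). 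Hence the general identity follows by induction on $k$ in both directions, the downward direction being symmetric since the recurrence lets one solve for $c_{k-2}^{[r]}$ in terms of later terms.

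Alternatively, and perhaps more transparently, I would prove the general statement directly from the matrix powers: since $C_n = M^{\,n-2} C_2$ and $C_2 = \left(\begin{smallmatrix} 1 & 0 \\ 0 & c_0^{[r]}\end{smallmatrix}\right)$ with $c_0^{[r]}=-1$, comparing entries of $M^{a}M^{b} = M^{a+b}$ yields all such bilinear relations at once; the quantity $c_{n-1}^{[r]}c_{n+k-3}^{[r]} - c_{n+k-2}^{[r]}c_{n-2}^{[r]}$ is exactly the $(1,2)$-type minor read off from a product of two power-of-$M$ matrices, and it collapses to $c_k^{[r]}$ after using $\det M = 1$. Either formulation reduces the claim to elementary bookkeeping.

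The main obstacle is not conceptual but a matter of index discipline: the shifts $n-1,\,n-2,\,n+k-2,\,n+k-3$ must be tracked carefully so that the reduction to two base cases of $k$ lands exactly on the initial conditions $c_1^{[r]}=0,\ c_2^{[r]}=1$ and on the normalization $c_0^{[r]}=-1$ that is forced by running the recurrence backward. I expect the only place an error could creep in is a sign or off-by-one slip in identifying $f(1)$ and $f(2)$ with $c_1^{[r]}$ and $c_2^{[r]}$, so I would verify those two evaluations explicitly rather than appealing to symmetry.
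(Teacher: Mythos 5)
Your proof is correct. The paper itself gives no argument for this lemma---it is quoted from \cite[Lemma 3.1]{LS3}---so there is nothing to compare against, but your two steps (constancy of $\det C_n$ under multiplication by the unimodular transfer matrix to get $(c_{n-1}^{[r]})^2-c_n^{[r]}c_{n-2}^{[r]}=1$, then observing that both sides of the general identity satisfy the recurrence $f(k)=rf(k-1)-f(k-2)$ in $k$ and agree at $k=1,2$) are exactly the standard argument for such Fibonacci-type determinant identities, and the base-case evaluations $f(1)=0$ and $f(2)=1$ check out.
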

%

\subsection{Non-acyclic mutation classes of rank 3} 
We start by collecting some basic results on quivers of rank 3. First let us recall how mutations act on a rank 3 quiver. Given a quiver 
\[\xymatrix{1\ar[rr]^\newr&&2\ar[ld]^\newt\\&3\ar[lu]^\news}\]
 where $\newr,\news,\newt\ge 0$ denote the number of arrows, then its mutation in 1 is the quiver
 \[\xymatrix{1\ar[rd]_\news&&2\ar[ll]_\newr \\ & 3\ar[ru]_{\newr\news-\newt}}\]
 where we agree that if $\newr\news-\newt<0$ then there are $|\newr\news-\newt|$ arrows from 2 to 3. 
 \begin{lemma} 
 \label{lem nonacyclic} 
Let  $Q$ be the quiver  
 \[\xymatrix{1\ar[rr]^\newr&&2\ar[ld]^\newt\\&3\ar[lu]^\news}\]
 where $\newr\ge 0$ and $\news,\newt\in \mathbb{Z}$ denote the number of arrows and suppose that one of $\news, \newt$ is non-negative. Consider the mutation sequence 
 $$\xymatrix{Q=Q_0\ar@{-}[r]^-1&Q_1\ar@{-}[r]^2&Q_2\ar@{-}[r]^1&Q_3\ar@{-}[r]^2&\cdots }$$ 
 Then $Q_n$ is
 \[\xymatrix{1\ar[rr]^\newr&&2\ar[ld]^{\bar \newt(n)}\\&3\ar[lu]^{\bar \news(n)}} \textup{ \quad if $n$ is even;}\qquad
 \xymatrix{1\ar[rd]_{\bar \newt(n)}&&2\ar[ll]_{\newr}\\&3\ar[ru]_{\bar \news(n)}} \textup{ \quad if $n$ is odd;  where }\]
\[\left\{\begin{array}{llll} 
\bar \news(n)= {c_{n+2}^{[\newr]}\news-c_{n+1}^{[\newr]}\newt} & \textup{and} & \bar \newt(n)=c_{n+1}^{[\newr]}\news-c_{n}^{[\newr]}\newt, & \textup{ if } c_{ \ell+1}^{[\newr]}\news-c_{ \ell}^{[\newr]}\newt>0 \textup{ for } 1\le \ell\le n; \\
 \bar \news(n)= {c_{n-1}^{[\newr]}\newt-c_{n}^{[\newr]}\news}  & \textup{and} & \bar \newt(n)=c_{n+1}^{[\newr]}\news-c_{n}^{[\newr]}\newt, 
 & \textup{ if }   c_{n+1}^{[\newr]}\news- c_{n}^{[\newr]}\newt\leq 0 \textup{ and }\\
 &&&\ c_{ \ell}^{[\newr]}\news -c_{ \ell-1}^{[\newr]}\newt> 0 \textup{ for } 1\le  \ell\le n; \\ 
 \bar \news(n)= {c_{n-1}^{[\newr]}\newt-c_{n}^{[\newr]}\news} & \textup{and} & \bar \newt(n)=c_{n-2}^{[\newr]}\newt-c_{n-1}^{[\newr]}\news, &\textup{ if }
 c_{n}^{[\newr]}\news-c_{n-1}^{[\newr]}\newt\leq 0 \textup{ and $n\ge 2$}.
 \end{array} \right.\] 

\noindent If both $\news<0$ and $\newt<0$ then 
\[ \bar \news(n)= {-c_{n+1}^{[\newr]}\newt-c_{n}^{[\newr]}\news} \quad \textup{and} \quad \bar \newt(n)=-c_{n}^{[\newr]}\newt-c_{n-1}^{[\newr]}\news. \]
\end{lemma}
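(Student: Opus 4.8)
The statement is purely about how the integer labels $\bar\news(n),\bar\newt(n)$ of a rank~3 quiver evolve under the alternating mutation sequence $\mu_1,\mu_2,\mu_1,\mu_2,\dots$, so the natural strategy is induction on $n$ using the single-mutation rule recalled just before the lemma. First I would record the two elementary mutation formulas explicitly. Writing a quiver in the ``even'' shape with labels $(\newr,\bar\news,\bar\newt)$ (arrows $1\to2$ of weight $\newr$, $3\to1$ of weight $\bar\news$, $2\to3$ of weight $\bar\newt$), mutation at~$1$ turns it into the ``odd'' shape, and mutation at~$2$ (applied to an odd shape) turns it back into an even shape. Because $\newr\ge 0$ is fixed (the arrow between $1$ and $2$ only flips orientation, keeping weight $\newr$), each step replaces the pair $(\bar\news,\bar\newt)$ by a linear combination with coefficients involving $\newr$; the key point is that these linear combinations are governed exactly by the Chebyshev-like recurrence $c^{[\newr]}_n=\newr c^{[\newr]}_{n-1}-c^{[\newr]}_{n-2}$ defining the sequence $c^{[\newr]}_n$.

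Concretely, I would verify the base cases $n=0,1,2$ by hand from the single-mutation rule and then set up the inductive step. Assuming the claimed formula holds for $Q_{n-1}$, I would apply one mutation (at $1$ or $2$ according to the parity of $n$) and check that the resulting weights are $\bar\news(n),\bar\newt(n)$ as stated. The computation reduces to a two-term recurrence identity among the $c^{[\newr]}$'s, for which Lemma~\ref{lem cn} is tailor-made: identities such as $c^{[\newr]}_{n-1}c^{[\newr]}_{n+k-3}-c^{[\newr]}_{n+k-2}c^{[\newr]}_{n-2}=c^{[\newr]}_k$ are precisely what is needed to collapse the updated linear combinations back into the claimed single-index form. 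In particular the special relation $(c^{[\newr]}_{n-1})^2-c^{[\newr]}_n c^{[\newr]}_{n-2}=1$ will control the sign/positivity bookkeeping.

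The genuine subtlety is not the algebra but the \emph{case division}. The single-mutation rule contains a sign convention (``if $\newr\news-\newt<0$ then there are $|\newr\news-\newt|$ arrows from $2$ to $3$''), which means the orientation of the $2$--$3$ arrow can reverse during the sequence; the three listed cases for $(\bar\news(n),\bar\newt(n))$ correspond exactly to how long the sign of $c^{[\newr]}_{\ell+1}\news-c^{[\newr]}_\ell\newt$ stays positive as $\ell$ increases. Thus the main obstacle will be tracking when a reversal first occurs and verifying that the inductive step respects the boundary conditions separating the three regimes — for instance, that once $c^{[\newr]}_{n}\news-c^{[\newr]}_{n-1}\newt$ turns nonpositive the formula switches to the third regime and stays there. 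I would handle this by analyzing the monotonicity of the quantity $c^{[\newr]}_{\ell+1}\news-c^{[\newr]}_{\ell}\newt$ in $\ell$ (again via the $c^{[\newr]}$-recurrence), showing the sign changes at most once, so that the three cases are exhaustive and mutually consistent.

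Finally, the ``both $\news<0$ and $\newt<0$'' clause is a degenerate regime where no sign change ever occurs: here the quantities stay of one sign for all $n$, and I would treat it as a clean standalone induction, using the same single-mutation rule and the $c^{[\newr]}$-identities but without any case split. I expect this last part to be routine once the general inductive machinery above is in place.
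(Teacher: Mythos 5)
Your plan matches the paper's proof: induction on $n$ with the base cases $n=1,2$ checked directly from the single-mutation rule, the inductive step collapsing $\newr\,\bar\news(n-1)-\bar\newt(n-1)$ into $\bar\news(n)$ via the defining recurrence $c^{[\newr]}_{n+2}=\newr c^{[\newr]}_{n+1}-c^{[\newr]}_{n}$, a case split governed by the first index at which $c^{[\newr]}_{\ell+1}\news-c^{[\newr]}_{\ell}\newt$ becomes nonpositive, and a separate short induction when both $\news<0$ and $\newt<0$. Two minor quibbles: the inductive algebra needs only the linear recurrence, not the quadratic identities of Lemma~\ref{lem cn}, and your proposed monotonicity argument showing the sign changes at most once is valid only for $\newr\ge 2$ (for $\newr\le 1$ the sequence $c^{[\newr]}_n$ is periodic) — but exhaustiveness of the three regimes is not actually asserted by the lemma, so nothing is lost.
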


\begin{remark}
One may rephrase in terms of the following notation: let
$$
\bar{s}(r,\xi,\omega,n)=\bar{s}(n):=c_{n+1}^{[r]}\xi-c_{n}^{[r]}\omega
$$  
for $n\geq 0$. Then we have
$$
\left\{
\begin{array}{ll}
\bar{\xi}(n)=\bar{s}(n+1), \ \bar{\omega}(n)=\bar{s}(n) & \text{ if }\bar{s}(1),...,\bar{s}(n)>0;\\
\bar{\xi}(n)=-\bar{s}(n-1), \ \bar{\omega}(n)=\bar{s}(n) & \text{ if }\bar{s}(n)\leq 0\text{ and }\bar{s}(1),...,\bar{s}(n-1)>0;\\
\bar{\xi}(n)=-\bar{s}(n-1), \ \bar{\omega}(n)=-\bar{s}(n-2) & \text{ if }\bar{s}(n-1)\leq 0\text{ and }n\geq 2.
\end{array}
\right.
$$
\end{remark}

\begin{remark} 
(1) The three cases  in the Lemma, when one of $\news,\newt$ is negative,  arise from the different possibilities for the orientation of the arrows at vertex 3. 
 In the first case, all quivers $Q_0,\ldots,Q_{n-1}$ are cyclically oriented; in the second case, the quivers $Q_0,\ldots,Q_{n-2}$ are cyclic and the quivers $Q_{n-1},Q_n,Q_{n+1}$ are acyclic. In the third case, if $\newr>1$, there exists $m<n$ such that $Q_{m-1},Q_m,Q_{m+1}$  are acyclic, and the quivers $Q_{p}$, with $p>m+1$, are cyclic.
 
 (2) If $\newr>1$ the three cases exhaust all the possibilities. Indeed, if $c_{n+1}^{[\newr]}\news-c_n^{[\newr]}\omega\le 0$ and $c_n^{[\newr]} \news- c_{n-1}^{[\newr]}\omega > 0$, then 
 \[\frac{c_{n+1}^{[\newr]}}{c_{n}^{[\newr]}} \le \frac{\omega}{\news} <  
 \frac{c_{n}^{[\newr]}}{c_{n-1}^{[\newr]}}
 <\frac{c_{n-1}^{[\newr]}}{c_{n-2}^{[\newr]}}
 <\cdots \quad \textup{ if $\news>0$},\] 
  \[\frac{c_{n}^{[\newr]}} {c_{n+1}^{[\newr]}}\ge \frac{\news}{\omega}>   
 \frac{c_{n-1}^{[\newr]}}{c_{n}^{[\newr]}}
 >\frac{c_{n-2}^{[\newr]}}{c_{n-1}^{[\newr]}}
 >\cdots \quad \textup{ if $\omega>0$},\]  and thus $c_\ell^{[\newr]} \news- c_{\ell-1}^{[\newr]}\omega > 0$, for $1\le\ell\le n$.

 If $\newr=0,1$ then $c_n^{[\newr]}$ is periodic. More precisely,  \[c_n^{[\newr]}=\left\{\begin{array}{ll} 
 \ldots 0,1,1,0,-1,-1,0,1,1,0,-1,-1,\ldots  & \textup{for $\newr=1 $}; \\
\ldots 0,1,0,-1,0,1,0,-1,\ldots & \textup{for $\newr=0 $.} \end{array}\right.\]

(3) The case where both $\news<0$ and $\newt<0$ is listed here for the sake of completeness, but this case is not used in the rest of the paper, because the quiver obtained after one mutation belongs to the other case. Note that if  $ \ell=1$ the condition  $c_{ \ell+1}^{[\newr]}\news-c_{ \ell}^{[\newr]}\newt>0$ becomes $\news>0$, and the condition $ c_{ \ell-1}^{[\newr]}\newt-c_{ \ell}^{[\newr]}\news<0$ becomes $\omega>0$.
\end{remark}

\begin{proof}
This is generalization of a result in \cite{BBH}, where  the first case is considered.     If $c_{ \ell+1}^{[\newr]}\news-c_{ \ell}^{[\newr]}\newt>0$, for $1\le  \ell\le n$, we proceed by induction on $n$. 
For $n=1$, the quiver $Q_1$, obtained from $Q$ by mutation in 1, is the following:
\[ \xymatrix{1\ar[rd]_{\news}&&2\ar[ll]_{\newr}\\&3\ar[ru]_{\newr\news- \newt}}\]
 and for $n=2$, the quiver $Q_2$, obtained from $Q$ by mutation in 1 and 2, is the following:
 \[\xymatrix{1\ar[rr]^\newr&&2\ar[ld]^{\newr\news-\newt}\\&3\ar[lu]^{(\newr^2-1)\news-\newr\newt}} \]
In both cases, the result follows from 
$c_1^{[\newr]}=0, c_2^{[\newr]}=1, c_3^{[\newr]}=\newr, c_4^{[\newr]}=\newr^2-1$. 

Suppose that $n>2$. If $n$ is odd then by induction we know that the quiver $Q_n$ is obtained by mutating the following quiver in  vertex 1:
\[\xymatrix{1\ar[rr]^\newr&&2\ar[ld]^{\bar \newt(n-1)}\\&3\ar[lu]^{\bar \news(n-1)}}\]
and the result follows from $\bar \newt(n)=\bar \news(n-1) $ and 
\begin{equation}\label{20130223eq1}
\newr \bar \news(n-1)-\bar \newt(n-1)= 
{\newr c_{n+1}^{[\newr]}\news-\newr c_{n}^{[\newr]}\newt -c_{n}^{[\newr]}\news+c_{n-1}^{[\newr]}\newt}
={c_{n+2}^{[\newr]}\news-c_{n+1}^{[\newr]}\newt}
=\bar \news(n).
\end{equation}
The proof is similar in the case where $n$ is even.

 If $c_{n+1}^{[\newr]}\news-c_{n}^{[\newr]}\newt\leq 0$  and  $c_{ \ell-1}^{[\newr]}\newt-c_{ \ell}^{[\newr]}\news<0$, for $1\le  \ell\le n$, and if $n$ is odd, then  the quiver $Q_n$  is obtained by mutating the following quiver in  vertex 1:
\[\xymatrix{1\ar[rr]^\newr\ar[rd]_{-\bar \news(n-1)}&&2\ar[ld]^{\bar \newt(n-1)}\\&3}\]
and the result follows from $\bar \newt(n)=\bar \news(n-1) $ and $\bar \news(n)=-\bar \newt(n-1)$. So the resulting quiver  $Q_n$ is 
\[\xymatrix{1&&2\ar[ll]_{\newr}\ar[ld]^{-\bar \news(n)}\\&3\ar[lu]^{-\bar \newt(n)}}\]
and $c_{n}^{[\newr]}\newt-c_{n+1}^{[\newr]}\news\geq 0$. This shows the statement of the Lemma in the second case. 

Now mutating in vertex 2 results in $Q_{n+1}$
\[\xymatrix{1\ar[rr]^\newr&&2\\&3\ar[lu]^{\bar \news(n+1)}\ar[ru]_{-\bar \newt(n+1)}}\]
where $\bar \newt(n+1)=\bar \news(n)=c_{n-1}^{[\newr]}\newt-c_{n}^{[\newr]}\news $ and $\bar \news(n+1)=-\bar \newt(n)=c_{n}^{[\newr]}\newt-c_{n+1}^{[\newr]}\news$.
This proves the third case of the Lemma.
 
 If $n$ is even then the proof is similar. 
 
 For the case where both $\news<0$ and $\newt<0$, it is easy to check the claimed identity for $n=1,2$, and $Q_n$ is non-acyclic for $n\ge 2$, thus the same proof as above applies.
\end{proof}
\begin{lemma}
 \label{rem 12}
 In the situation of Lemma \ref{lem nonacyclic}, if $\newr\ge 2$ and $\news\ge\omega>0$, then $Q_n$ is cyclically oriented for all $n\ge 0$.
\end{lemma}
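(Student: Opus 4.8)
The plan is to show that the hypotheses force us to stay in the first case of Lemma~\ref{lem nonacyclic} for every $n$, where $Q_n$ carries the three arrow multiplicities $r>0$, $\bar\xi(n)$ and $\bar\omega(n)$. If all three are positive, then $Q_n$ is a rank~$3$ quiver whose three arrow counts are positive, hence cyclically oriented, which is exactly what we want. Writing $\bar s(n)=c_{n+1}^{[r]}\xi-c_n^{[r]}\omega$ as in the reformulation given in the Remark following Lemma~\ref{lem nonacyclic}, the first case applies precisely as long as $\bar s(1),\dots,\bar s(n)>0$, and in that case $\bar\xi(n)=\bar s(n+1)$ and $\bar\omega(n)=\bar s(n)$. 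So the entire statement reduces to the single assertion that $\bar s(\ell)>0$ for all $\ell\ge 1$.

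First I would record the elementary fact that for $r\ge 2$ the sequence $c_n^{[r]}$ is positive and strictly increasing for $n\ge 2$, with $0=c_1^{[r]}<c_2^{[r]}<c_3^{[r]}<\cdots$. This follows by induction from the recurrence: assuming $c_n^{[r]}>c_{n-1}^{[r]}\ge 0$, we get
\[
c_{n+1}^{[r]}=r\,c_n^{[r]}-c_{n-1}^{[r]}\ge 2c_n^{[r]}-c_{n-1}^{[r]}=c_n^{[r]}+\big(c_n^{[r]}-c_{n-1}^{[r]}\big)>c_n^{[r]},
\]
the base cases $c_1^{[r]}=0<1=c_2^{[r]}$ and $c_2^{[r]}<c_3^{[r]}=r$ being immediate.

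With this monotonicity in hand, the positivity of $\bar s(\ell)$ is a one-line estimate that uses the hypothesis $\xi\ge\omega>0$. For every $\ell\ge 1$ we have $c_{\ell+1}^{[r]}>0$ and $c_{\ell+1}^{[r]}>c_\ell^{[r]}$, so
\[
\bar s(\ell)=c_{\ell+1}^{[r]}\xi-c_\ell^{[r]}\omega\ \ge\ c_{\ell+1}^{[r]}\omega-c_\ell^{[r]}\omega\ =\ \big(c_{\ell+1}^{[r]}-c_\ell^{[r]}\big)\,\omega\ >\ 0,
\]
where the first inequality uses $\xi\ge\omega$ together with $c_{\ell+1}^{[r]}>0$, and the last uses $\omega>0$. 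The case $\ell=1$ is subsumed, giving $\bar s(1)=\xi>0$. Thus all $\bar s(\ell)$ are positive, we are in the first case of Lemma~\ref{lem nonacyclic} for arbitrarily large $n$, and therefore $Q_n$ is cyclically oriented for every $n\ge 0$.

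I do not expect a genuine obstacle here: the whole content is the monotonicity of $c_n^{[r]}$ for $r\ge 2$ together with the bookkeeping that the first case of Lemma~\ref{lem nonacyclic} indeed outputs a cyclic quiver. The only point requiring a little care is that the hypothesis $\xi\ge\omega$ (rather than merely $\xi,\omega>0$) is exactly what the estimate above needs; without it the difference $c_{\ell+1}^{[r]}\xi-c_\ell^{[r]}\omega$ could turn negative once $c_\ell^{[r]}$ grows, which is precisely the moment the mutation sequence would leave the cyclic region and enter the second or third case of Lemma~\ref{lem nonacyclic}.
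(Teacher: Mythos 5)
Your proof is correct and follows essentially the same route as the paper, which simply states that "an easy induction, using $c_{\ell+1}^{[r]}=rc_\ell^{[r]}-c_{\ell-1}^{[r]}$, shows that we never quit the first case of Lemma~\ref{lem nonacyclic}." You have supplied exactly the induction the paper leaves implicit (monotonicity of $c_n^{[r]}$ for $r\ge 2$, then $\bar s(\ell)\ge (c_{\ell+1}^{[r]}-c_\ell^{[r]})\omega>0$ using $\xi\ge\omega>0$), and the argument is sound.
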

\begin{proof}
 An easy induction, using $c_{ \ell+1}^{[\newr]} =\newr c_{ \ell}^{[\newr]} -c_{ \ell-1}^{[\newr]}$, shows that we never quit the first case of Lemma~\ref{lem nonacyclic}.
\end{proof}
\begin{definition}\label{def 12} Let $Q_0$ be the quiver $\xymatrix@R=5pt{1\ar[rr]^\newr&&2\ar[ld]^{ \newt}\\&3\ar[lu]^{ \news}}$
with $\newr\ge 0$ and $\newt,\news\in \mathbb{Z}$ and let
$$\xymatrix{Q_0\ar@{-}[r]^1&Q_1\ar@{-}[r]^2&Q_2\ar@{-}[r]^1&Q_3\ar@{.}[r] &Q_m}$$
 be a sequence of quiver mutations in directions 1 and 2.
 The sequence $(Q_0,...,Q_m)$ of quivers is said to be of \emph{almost cyclic type} if one of the following holds:\begin{enumerate}
\item $\newr\geq 2$ and $c_{n}^{[\newr]}\news-c_{n-1}^{[\newr]}\newt>0$ for $1\leq n\leq m$;

\item  $\newr\geq 2$ and $c_{n}^{[\newr]}\news-c_{n-1}^{[\newr]}\newt\leq 0$
for $1\leq n\leq m$;

\item $\newr=1$, $m\leq 2$ and $c_{n}^{[\newr]}\news-c_{n-1}^{[\newr]}\newt>0$ for $1\leq n\leq m$;

\item $\newr=1$, $m\leq 2$ and  $c_{n}^{[\newr]}\news-c_{n-1}^{[\newr]}\newt\leq 0$ for $1\leq n\leq m$.
\item $\newr=0$.
\end{enumerate}

 The sequence $(Q_0,...,Q_m)$ of quivers is said to be of \emph{acyclic type} if one of the following holds:
\begin{itemize}
\item[{\rm (6)}] $\newr\geq 2$, $m\geq 2$, and $c_{n+1}^{[\newr]}\news-c_{n}^{[\newr]}\newt\leq 0 \textup{ and } c_{n-1}^{[\newr]}\newt-c_{n}^{[\newr]}\news<0$  for some $1\leq n\leq m-1$;

\item[{\rm (7)}] $\newr=1$, $m=2$, $\news\leq 0$ and $\newt>0$.
\end{itemize}
\end{definition}

\begin{remark}
 Conditions (3) and (4) are equivalent to conditions (3') and (4') below, respectively.
 \[ \textup{(3')} \quad \newr=1, m=1, \newt> 0 \textup{ or } \newr=1, m=2,\newt> 0, \news> 0\]
 \[ \textup{(4')} \quad \newr=1, m=1, \newt\le 0 \textup{ or } \newr=1, m=2,\newt \le 0, \news\le 0\]
 \end{remark}
 
\begin{remark} {\ralf The quantities  $c_{n}^{[\newr]}\news-c_{n-1}^{[\newr]}\newt$ are the number of arrows in the quivers $Q_0,\ldots,Q_m$, see Lemma~\ref{lem nonacyclic}.}
 If each of the quivers $Q_1,\ldots,Q_{m-1}$ has an oriented cycle then the sequence $(Q_0,\ldots,Q_m)$ is of almost cyclic type. Thus being of almost cyclic type does not depend on the cyclicity of first and the last quiver.
 {\ralf Condition (1) of the definition means that quivers $Q_0,\ldots,Q_{m-2}$ are cyclic, and 
 condition (2) means that quivers $Q_2,\ldots,Q_{m}$ are cyclic.
 } 
 
 Observe that it is possible that certain quivers in an almost cyclic sequence are acyclic. For example the sequence 
 \[\xymatrix@R5pt{&Q_0&&&&Q_1&&&&Q_2\\1\ar[rr]&&2 &&1&&2\ar[ll] &&1\ar[rr]&&2\ar[ld]\\
 &3\ar[ru]&&&&3\ar[ru] &&&&3\ar[lu]}\]
 satisfies condition (4), and is therefore almost cyclic.
\end{remark}
\begin{remark}
 If $\newr\ge 2$ then conditions (1), (2) and (6) exhaust all possibilities. Thus in this case the sequence $(Q_0, \ldots, Q_m)$ is either of almost cyclic type or of acyclic type.
\end{remark}
\begin{remark}
 In the case where $\newr=1$ we only consider sequences with $m\le 2$. The reason for this is that in this case the quiver $Q_5$ is the same as the quiver $Q_0$ with the vertices 1 and 2 switched. Thus the quivers obtained from $Q_0$ by a sequence of length $m$ with $3\le m\le 5$ are the same as the quivers obtained from $Q_5 $ by a sequence of length $5-m$ with $0\le 5-m\le 2$.
\end{remark}
\begin{remark}
 \label{Rem acyclic}
 If the sequence is of acyclic type satisfying condition (6) with some $n\le m-1$ then all quivers $Q_{n+1},\ldots,Q_m$ satisfy the third condition of Lemma~\ref{lem nonacyclic}.
\end{remark}

\subsection{Compatible pairs} 
Let $(a_1, a_2)$ be a pair of nonnegative integers.
A \emph{Dyck path} of type $a_1\times a_2$ is a lattice path
from $(0, 0)$  to $(a_1,a_2)$ that
never goes above the main diagonal joining $(0,0)$ and $(a_1,a_2)$.
Among the Dyck paths of a given type $a_1\times a_2$, there is a (unique) \emph{maximal} one denoted by
$\mathcal{D} = \mathcal{D}^{a_1\times a_2}$.
It is defined by the property that any lattice point strictly above $\mathcal{D}$ is also strictly above the main diagonal.

{\ralf It will be convenient to extend this definition to negative integers $a_1,a_2$. If $a_1<0$ then the notation 
$ \mathcal{D}^{a_1\times a_2}$ means  $\mathcal{D}^{0 \times a_2}$,
and, similarly, if $a_2<0$ then the notation 
$ \mathcal{D}^{a_1\times a_2}$ means  $\mathcal{D}^{a_1 \times 0}$. If both $a_1,a_2<0$ then $ \mathcal{D}^{a_1\times a_2}$ means  $\mathcal{D}^{0 \times 0}$.
}

Let $\mathcal{D}=\mathcal{D}^{a_1\times a_2}$.  Let $\mathcal{D}_1=\mathcal{D}^{a_1\times a_2}_1=\{u_1,\dots,u_{a_1}\}$ be the set of horizontal edges of $\mathcal{D}$ indexed from left to right, and $\mathcal{D}_2=\mathcal{D}^{a_1\times a_2}_2=\{v_1,\dots, v_{a_2}\}$ the set of vertical edges of $\mathcal{D}$ indexed from bottom to top.
Given any points $A$ and $B$ on $\mathcal{D}$, let $AB$ be the subpath starting from $A$, and going in the Northeast direction until it reaches $B$ (if we reach $(a_1,a_2)$ first, we continue from $(0,0)$). By convention, if $A=B$, then $AA$ is the subpath that starts from $A$, then passes $(a_1,a_2)$ and ends at $A$. If we represent a subpath of $\mathcal{D}$ by its set of edges, then for $A=(i,j)$ and $B=(i',j')$, we have
$$
AB=
\begin{cases}
\{u_k, v_\ell: i < k \leq i', j < \ell \leq j'\}, \quad\textrm{if $B$ is to the Northeast of $A$};\\
\mathcal{D} - \{u_k, v_\ell: i' < k \leq i, j' < \ell \leq j\}, \quad\textrm{otherwise}.
\end{cases}
$$
We denote by $(AB)_1$ the set of horizontal edges in $AB$, and by $(AB)_2$ the set of vertical edges in $AB$.
Also let $AB^\circ$ denote the set of lattice points on the subpath $AB$ excluding the endpoints $A$ and $B$ (here $(0,0)$ and $(a_1,a_2)$ are regarded as the same point).

Here is an example for $(a_1,a_2)=(6,4)$.

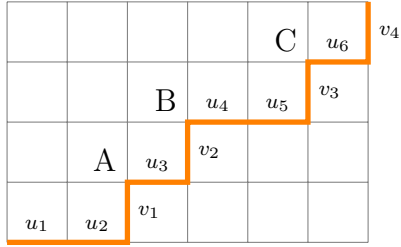
\begin{figure}[h]
\begin{tikzpicture}[scale=.8]
\draw[step=1,color=gray] (0,0) grid (6,4);
\draw[line width=2,color=orange] (0,0)--(2,0)--(2,1)--(3,1)--(3,2)--(5,2)--(5,3)--(6,3)--(6,4);
%
\draw (0.5,0) node[anchor=south]  {\tiny$u_1$};
\draw (1.5,0) node[anchor=south]  {\tiny$u_2$};
\draw (2.5,1) node[anchor=south]  {\tiny$u_3$};
\draw (3.5,2) node[anchor=south]  {\tiny$u_4$};
\draw (4.5,2) node[anchor=south]  {\tiny$u_5$};
\draw (5.5,3) node[anchor=south]  {\tiny$u_6$};
\draw (6,3.5) node[anchor=west]  {\tiny$v_4$};
\draw (5,2.5) node[anchor=west]  {\tiny$v_3$};
\draw (3,1.5) node[anchor=west]  {\tiny$v_2$};
\draw (2,.5) node[anchor=west]  {\tiny$v_1$};
\draw (2,1) node[anchor=south east] {A};
\draw (3,2) node[anchor=south east] {B};
\draw (5,3) node[anchor=south east] {C};
\end{tikzpicture}
\caption{A maximal Dyck path.}
\label{fig:Dyck-path}
\end{figure}

Let $A=(2,1)$, $B=(3,2)$ and $C=(5,3)$. Then
$$
(AB)_1=\{u_3\}, \,\, (AB)_2=\{v_2\}, \,\, 
(BA)_1=\{u_4,u_5,u_6,u_1,u_2\}, \, \, (BA)_2=\{v_3,v_4,v_1\} \ . 
$$
The point $C$ is in $BA^\circ$ but not in $AB^\circ$. The subpath $AA$ has length 10 (not 0).

\begin{definition}
\label{df:compatible} Let $r$ be a positive integer.
For $S_1\subseteq \mathcal{D}_1$, $S_2\subseteq \mathcal{D}_2$, we say that the pair $(S_1,S_2)$ is $r$-\emph{compatible} if for every $u\in S_1$ and $v\in S_2$, denoting by $E$ the left endpoint of $u$ and $F$ the upper endpoint of $v$, there exists a lattice point $A\in EF^\circ$ such that
\begin{equation}
\label{0407df:comp}
|(AF)_1|=r|(AF)_2\cap S_2|\textrm{\; or\; }|(EA)_2|=r|(EA)_1\cap S_1|.\end{equation}
\end{definition}

\begin{remark}
 We often say \emph{compatible} instead of $r$-\emph{compatible} if $r$ is clear from the context.
\end{remark}

{\ralf For $r=3$,  the pair $(\{u_1,u_2\}, \{v_3,v_4\})$ is not compatible in $\mathcal{D}^{6\times 4}$ (Figure 2),  but is compatible in $\mathcal{D}^{7\times 4}$ (Figure 3).}
$$\begin{tikzpicture}[scale=.5]
\draw[step=1,color=gray] (0,0) grid (6,4);
\draw[line width=2,color=orange] (0,0)--(2,0)--(2,1)--(3,1)--(3,2)--(5,2)--(5,3)--(6,3)--(6,4);
\draw (0,0) node[anchor=east]  {$E$};
\draw (6,4) node[anchor=south]  {$F$};
%
\draw[line width=2pt] (0,0)--(1,0);
\draw (0.5,1) node[anchor=north]  {\tiny$u_1$};
\draw[line width=2pt] (1,0)--(2,0);
\draw (1.5,1) node[anchor=north]  {\tiny$u_2$};
\draw[line width=2pt] (6,4)--(6,3);
\draw (6,3.5) node[anchor=west]  {\tiny$v_4$};
\draw[line width=2pt] (5,3)--(5,2);
\draw (5,2.5) node[anchor=west]  {\tiny$v_3$};
\draw (3,-1) node[anchor=north] {Figure 2};
\begin{scope}[shift={(12,0)}]
\draw[step=1,color=gray] (0,0) grid (7,4);
\draw[line width=2,color=orange] (0,0)--(2,0)--(2,1)--(4,1)--(4,2)--(6,2)--(6,3)--(7,3)--(7,4);
\draw (0,0) node[anchor=east]  {$E$};
\draw (7,4) node[anchor=south]  {$F$};
%
\draw[line width=2pt] (0,0)--(1,0);
\draw (0.5,1) node[anchor=north]  {\tiny$u_1$};
\draw[line width=2pt] (1,0)--(2,0);
\draw (1.5,1) node[anchor=north]  {\tiny$u_2$};
\draw[line width=2pt] (7,4)--(7,3);
\draw (7,3.5) node[anchor=west]  {\tiny$v_4$};
\draw[line width=2pt] (6,3)--(6,2);
\draw (6,2.5) node[anchor=west]  {\tiny$v_3$};
\draw (3,-1) node[anchor=north] {Figure 3};
\end{scope}
\end{tikzpicture}
$$

\subsection{Expansion formulas}\label{sect 3.3}
The formulas in this subsection are proved in \cite{LS3}.

Let $\mathcal{A}$ be a skew-symmetric cluster algebra of geometric type 
of arbitrary rank $N$.  Let $\mathbf{x}_t=\{x_{1;t},\ldots,x_{N;t}\}$ and $\mathbf{x}_{t'}=\{x_{1;t'},\ldots,x_{N;t'}\}$ be two clusters such that there exists a sequence $\mu$ of mutations in directions 1 and 2, transforming $\mathbf{x}_{t'}$ into $\mathbf{x}_t$.  Suppose that the last mutation in $\mu $ is in direction 1. Thus $\mu$ is of one of the following two forms.
 \[\xymatrix@C=10pt{\mu =&t'\ar@{-}[r]^1&\cdot\ar@{-}[r]^2&\cdot\ar@{-}[r]^1&\cdot\ar@{.}[r]&\cdot\ar@{-}[r]^2&\cdot\ar@{-}[r]^1&t   
 &&\textup{or}&&
\mu =&t'\ar@{-}[r]^2&\cdot\ar@{-}[r]^1&\cdot\ar@{-}[r]^2&\cdot\ar@{.}[r]&\cdot\ar@{-}[r]^2&\cdot\ar@{-}[r]^1&t}\]
Observe that $x_{f,t'}=x_{f,t}$ for all $f=3,4,\ldots,N$.
Denote by $n$ the number of seeds in the sequence $\mu$ including $t'$ and $t$. 
Let $Q_t$ be the quiver of the seed at $t$, let $r$ be the number of arrows $1\to 2$, where we suppose without loss of generality that $r\ge 0$, and let $\xi_f\in\mathbb{Z}$ be the number of arrows $f\to 1$, and $\omega_f\in\mathbb{Z}$  the number of arrows $2\to f$ in $Q_t$.
Given $p,q\ge 0$, define
$$A_i=A_i(p,q)=\left\{\begin{array}{ll}
p c_{i+1}^{[r]} +q c_{i}^{[r]} &\textup{if }\mu =\xymatrix@C=10pt{t'\ar@{-}[r]^1&\cdot\ar@{-}[r]^2&\cdot\ar@{-}[r]^1&\cdot\ar@{.}[r]&\cdot\ar@{-}[r]^2&\cdot\ar@{-}[r]^1&t} ;\\
q c_{i+1}^{[r]} +p c_{i}^{[r]} &\textup{if }\mu =\xymatrix@C=10pt{t'\ar@{-}[r]^2&\cdot\ar@{-}[r]^1&\cdot\ar@{-}[r]^2&\cdot\ar@{.}[r]&\cdot\ar@{-}[r]^2&\cdot\ar@{-}[r]^1&t};
\end{array}\right. $$

and
$$\alpha=\left\{\begin{array}{ll}
q  &\textup{if }\mu =\xymatrix@C=10pt{t'\ar@{-}[r]^1&\cdot\ar@{-}[r]^2&\cdot\ar@{-}[r]^1&\cdot\ar@{.}[r]&\cdot\ar@{-}[r]^2&\cdot\ar@{-}[r]^1&t} ;\\
p &\textup{if }\mu =\xymatrix@C=10pt{t'\ar@{-}[r]^2&\cdot\ar@{-}[r]^1&\cdot\ar@{-}[r]^2&\cdot\ar@{.}[r]&\cdot\ar@{-}[r]^2&\cdot\ar@{-}[r]^1&t}.
\end{array}\right. $$

  The following Lemma is a straightforward consequence of Lemma \ref{lem cn}.

\begin{lemma}\label{negone}\cite[Lemma 3.12]{LS3}
For any $i$, we have
\begin{itemize}
\item [(a)] $ A_i=rA_{i-1}-A_{i-2},$
\item [(b)] $A_{i}^2 - A_{i+1}A_{i-1}= p^2 + q^2 + rpq.$\qed
\end{itemize}
\end{lemma}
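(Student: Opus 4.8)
The plan is to treat (a) and (b) as direct computations from the definition of $A_i$, reducing each to the three-term recurrence $c_n^{[r]}=rc_{n-1}^{[r]}-c_{n-2}^{[r]}$ and to the bilinear identities recorded in Lemma~\ref{lem cn}. First I would observe that the two cases in the definition of $A_i$ are interchanged by the substitution $p\leftrightarrow q$, while the asserted right-hand sides $rA_{i-1}-A_{i-2}$ and $p^2+q^2+rpq$ are both symmetric in $p$ and $q$. Hence it suffices to prove both identities for the single form $A_i=p\,c_{i+1}^{[r]}+q\,c_i^{[r]}$, and the other form follows automatically.

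For part (a), I would substitute the definition into the right-hand side. Writing $A_{i-1}=p\,c_i^{[r]}+q\,c_{i-1}^{[r]}$ and $A_{i-2}=p\,c_{i-1}^{[r]}+q\,c_{i-2}^{[r]}$ and collecting the coefficients of $p$ and of $q$, the expression $rA_{i-1}-A_{i-2}$ becomes $p\,(r c_i^{[r]}-c_{i-1}^{[r]})+q\,(r c_{i-1}^{[r]}-c_{i-2}^{[r]})$. The recurrence for $c_n^{[r]}$ turns the two parenthesized terms into $c_{i+1}^{[r]}$ and $c_i^{[r]}$ respectively, which is exactly $A_i$. This step is purely formal linearity and presents no difficulty.

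For part (b), I would expand $A_i^2$ and $A_{i+1}A_{i-1}$ as quadratic forms in $p$ and $q$ and compare coefficients. The coefficient of $p^2$ is $(c_{i+1}^{[r]})^2-c_{i+2}^{[r]}c_i^{[r]}$, the coefficient of $q^2$ is $(c_i^{[r]})^2-c_{i+1}^{[r]}c_{i-1}^{[r]}$, and the coefficient of $pq$ is $2c_{i+1}^{[r]}c_i^{[r]}-c_{i+2}^{[r]}c_{i-1}^{[r]}-c_{i+1}^{[r]}c_i^{[r]}=c_{i+1}^{[r]}c_i^{[r]}-c_{i+2}^{[r]}c_{i-1}^{[r]}$. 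The two square coefficients are each equal to $1$ by the ``in particular'' clause $(c_{n-1}^{[r]})^2-c_n^{[r]}c_{n-2}^{[r]}=1$ of Lemma~\ref{lem cn}, applied with $n=i+2$ and with $n=i+1$. The cross coefficient is handled by the general identity of Lemma~\ref{lem cn} with $n=i+1$ and $k=3$, which yields $c_i^{[r]}c_{i+1}^{[r]}-c_{i+2}^{[r]}c_{i-1}^{[r]}=c_3^{[r]}=r$. Summing the three contributions gives $A_i^2-A_{i+1}A_{i-1}=p^2+q^2+rpq$, as claimed.

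The only place that requires genuine care — and the step I would double-check — is the index bookkeeping in the cross term: matching $c_{i+1}^{[r]}c_i^{[r]}-c_{i+2}^{[r]}c_{i-1}^{[r]}$ to the correct specialization $(n,k)=(i+1,3)$ of Lemma~\ref{lem cn}, so that the right-hand side evaluates to $c_3^{[r]}=r$ rather than to some other term of the sequence. I would also note that all three identities hold for every $i\in\mathbb{Z}$, since $c_n^{[r]}$ is defined by the recurrence for all integer indices and the Casoratian $(c_{n-1}^{[r]})^2-c_n^{[r]}c_{n-2}^{[r]}$ is independent of $n$; this justifies the phrase ``for any $i$'' in the statement.
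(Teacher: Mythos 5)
Your proof is correct and follows exactly the route the paper intends: the paper states the lemma with no written proof, declaring it ``a straightforward consequence of Lemma~\ref{lem cn},'' and your computation — reducing (a) to the recurrence and (b) to the specializations $(c_{n-1}^{[r]})^2-c_n^{[r]}c_{n-2}^{[r]}=1$ and $c_{n-1}^{[r]}c_{n}^{[r]}-c_{n+1}^{[r]}c_{n-2}^{[r]}=c_3^{[r]}=r$ — is precisely that consequence, with the index bookkeeping done correctly. The symmetry remark disposing of the second form of $A_i$ and the observation that the Casoratian identity extends to all integer indices are both sound.
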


We are now ready to state our first expansion formula.
\begin{theorem}\label{mainthm12052011}\cite[Theorem 3.13]{LS3} For all $p,q\ge 0$, we have
\[x_{1;t'}^p x_{2;t'}^q = \sum_{(S_1,S_2)}x_{1;t}^{r|S_2|-A_{n-1}}x_{2;t}^{r|S_1|-A_{n-2}} \prod_{f=3}^N x_{f;t}^{\xi_f(A_{n-1}-|S_1|)-\omega_f|S_2|-M_f} \]
where 
$M_f =\textup{min}_{(S_1,S_2)}\, \{\xi_f(A_{n-1}-|S_1|)-\omega_f|S_2| \}$, and
the sum is over all $(S_1=\cup_{i=1}^{p+q}\, S_{1}^i,S_2=\cup_{i=1}^{p+q}\, S_{2}^i)$ such that  
$$(S_{1}^i,S_{2}^i) \textup{ is a compatible pair in }\left\{\begin{array}{ll}\mathcal{D}^{c^{[r]} _{n-1}\times c^{[r]} _{n-2}}  &\textup{ if $1\le i \le \alpha$}; \\  
 \mathcal{D}^{c^{[r]} _{n}\times c^{[r]} _{n-1}} &\textup{ if $\alpha+1\le i\le p+q$.}
\end{array}\right.$$
\end{theorem}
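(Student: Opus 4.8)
The plan is to prove Theorem~\ref{mainthm12052011} by induction on $n$, the number of seeds in the mutation sequence $\mu$, using the combinatorics of compatible pairs together with the recursion in Lemma~\ref{negone}. The base case is $n$ small (say $n=2$, a single mutation), where the formula should reduce to the ordinary exchange relation and can be verified directly: there the Dyck paths $\mathcal{D}^{c_1\times c_0}$ and $\mathcal{D}^{c_2\times c_1}$ are essentially trivial, the compatible pairs $(S_1^i,S_2^i)$ are easy to enumerate, and $A_{n-1},A_{n-2}$ specialize so that the exponents $r|S_2|-A_{n-1}$ and $r|S_1|-A_{n-2}$ give precisely the numerator and denominator monomials of the exchange relation \eqref{exchange relation}. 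I would check both orientations of $\mu$ (ending in direction~1 from either a $1$- or $2$-first sequence) since $A_i$ and $\alpha$ are defined differently in the two cases.

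For the inductive step, the idea is to peel off one more mutation from the end of $\mu$ and relate the Laurent expansion at $t$ to the expansion at the adjacent seed $t''$ (one step closer to $t'$). The governing quiver data $(r,\xi_f,\omega_f)$ transforms under mutation exactly as recorded in Lemma~\ref{lem nonacyclic}, which controls how the arrow counts $\bar\xi(n),\bar\omega(n)$ evolve; the sequences $c_n^{[r]}$ and hence $A_i$ obey the same three-term recurrence $A_i=rA_{i-1}-A_{i-2}$ from Lemma~\ref{negone}(a). The main work is to match, on the one hand, the algebraic substitution coming from applying the exchange relation once more, and on the other hand, the combinatorial change in the indexing set of compatible pairs: passing from $n-1$ to $n$ replaces one factor $\mathcal{D}^{c_{n-1}\times c_{n-2}}$ by $\mathcal{D}^{c_n\times c_{n-1}}$ in the product $S_1=\cup S_1^i$, $S_2=\cup S_2^i$. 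I expect this to be governed by a bijection between compatible pairs on the larger Dyck path and certain decompositions of compatible pairs on the smaller ones, with the cardinalities $|S_1|,|S_2|$ transforming linearly so that the exponents $r|S_2|-A_{n-1}$, $r|S_1|-A_{n-2}$, and $\xi_f(A_{n-1}-|S_1|)-\omega_f|S_2|$ track correctly through the recurrence.

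The hardest step will be verifying that the exponents of the coefficient variables $x_{f;t}$ for $f\ge 3$ remain controlled, in particular that the normalizing minimum $M_f=\min_{(S_1,S_2)}\{\xi_f(A_{n-1}-|S_1|)-\omega_f|S_2|\}$ behaves compatibly under the induction and that the net exponent $\xi_f(A_{n-1}-|S_1|)-\omega_f|S_2|-M_f$ stays nonnegative, so that the expression is genuinely a Laurent polynomial with the claimed shape. This is where the compatibility condition \eqref{0407df:comp} does its real work: the constraint $|(AF)_1|=r|(AF)_2\cap S_2|$ or $|(EA)_2|=r|(EA)_1\cap S_1|$ is precisely what forces the cancellation of negative powers when one substitutes the exchange relation, and I would need to show this condition is preserved (in the appropriate bijective sense) when the Dyck path grows from type $c_{n-1}\times c_{n-2}$ to $c_n\times c_{n-1}$. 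Since the full argument is carried out in \cite{LS3}, I would structure the proof around these two pillars—the recurrence $A_i=rA_{i-1}-A_{i-2}$ and the invariance of compatibility under enlarging the Dyck path—and refer to \cite{LS3} for the detailed bijection and the Laurentness bookkeeping rather than reproducing every case.
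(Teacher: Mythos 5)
The paper does not actually prove Theorem~\ref{mainthm12052011}; it imports it verbatim from \cite[Theorem 3.13]{LS3} (the subsection opens with ``The formulas in this subsection are proved in \cite{LS3}''), so your decision to defer the detailed combinatorics to \cite{LS3} is consistent with what the paper itself does. However, the route you sketch is not the one the cited proof takes, and the divergence occurs exactly at the step you flag as hardest. In \cite{LS3} the exponents of the frozen variables $x_{f;t}$, $f\ge 3$, and in particular the normalizing term $M_f$, are not carried through an induction on $n$: the formula is first established with principal coefficients, and $M_f$ then arises as the tropical evaluation in Fomin--Zelevinsky's separation of addition \cite[Theorem~3.7]{FZ4} --- the paper says so explicitly in the remark following the theorem, and Lemma~\ref{12312011} is what makes the minimum computable by reducing it to three extremal choices of $(S_1,S_2)$. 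Since $M_f$ is a global minimum over the whole index set, it does not obey any clean recursion under ``peeling off one mutation,'' so the inductive bookkeeping you propose for the $f\ge 3$ exponents would be very hard to close as stated. For the rank-two core your outline is closer in spirit to \cite{LS} (an induction exploiting $A_i=rA_{i-1}-A_{i-2}$ and $(c_{n-1}^{[r]})^2-c_n^{[r]}c_{n-2}^{[r]}=1$, cf.\ Lemmas~\ref{negone} and~\ref{lem cn}), but the combinatorial step there is an identity of weighted sums proved via colored subpaths (or, in the parametrization of \cite{LLZ}, via the greedy recurrence), not a bijection between compatible pairs on $\mathcal{D}^{c^{[r]}_n\times c^{[r]}_{n-1}}$ and decompositions over the smaller Dyck paths; such a bijection is the hard content and should not be asserted without construction. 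In short: as a citation-plus-context your proposal matches the paper, but the sketched mechanism should not be mistaken for an outline of the actual argument.
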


\begin{remark}\label{remark 3.14}
 It can be shown that the summation on the right hand side in Theorem \ref{mainthm12052011} can be taken over all compatible pairs in $\mathcal{D}^{A_{n-1}\times A_{n-2}}$ instead, without changing the sum, see \cite[Theorem 1.11]{LLZ}.
  \end{remark}

\begin{remark}
 The  term $M_f$ in the exponent of $x_{f;t}$ in Theorem  \ref{mainthm12052011} comes from Fomin-Zelevinsky's separation of addition formula \cite[Theorem 3.7]{FZ4}. 
\end{remark}

We shall need a precise value for $M_f$. As a first step, we determine which   pair $(S_1,S_2)$ can realize the minimum $M_f$.
  Let 
\[\begin{array}{rcl} a_{1,i}&=&\left\{\begin{array}{ll}
c^{[r]} _{n-1} &\textup{if }1\leq i\leq \alpha;\\
             c^{[r]} _n       &\textup{if } \alpha+1\leq i\leq p+q.
\end{array}\right. \\
a_{2,i}&=& \left\{\begin{array}{ll}
c^{[r]} _{n-2}&\textup{if }1\leq i\leq \alpha;\\
             c^{[r]} _{n-1} &\textup{if } \alpha+1\leq i\leq p+q.
             \end{array}\right. 
\end{array}\]

\begin{lemma}\label{12312011}\cite[Lemma 3.8]{LS3}
In the setting of Theorem~\ref{mainthm12052011}, 
 consider the values 
$\xi_f(A_{n-1} - |S_1|) - \omega_f|S_2|$ obtained from the following three cases:
\begin{itemize}
\item $ S^i_1=\mathcal{D}^{a_{1,i}\times a_{2,i}}_1$ and $S^i_2=\emptyset$ for all $1\leq i\leq p+q$
\item $S^i_1=\emptyset$ and $S^i_2=\emptyset$ for all $1\leq i\leq p+q$
\item $S^i_1=\emptyset$ and $S^i_2=\mathcal{D}^{a_{1,i}\times a_{2,i}}_2$ for all $1\leq i\leq p+q$
\end{itemize}
Then one of the (possibly non-distinct) three values is equal to $M_f$. 
\end{lemma}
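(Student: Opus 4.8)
The statement asserts that the minimum $M_f = \min_{(S_1,S_2)} \{\xi_f(A_{n-1}-|S_1|) - \omega_f|S_2|\}$ over all compatible-pair configurations is attained at one of three distinguished ``extreme'' configurations: either all horizontal edges chosen and no vertical edges, nothing chosen, or no horizontal edges and all vertical edges. The key structural observation is that the expression $\xi_f(A_{n-1}-|S_1|) - \omega_f|S_2|$ is \emph{linear} in the quantities $|S_1| = \sum_i |S_1^i|$ and $|S_2| = \sum_i |S_2^i|$, with fixed coefficients $-\xi_f$ and $-\omega_f$. A linear functional on a region attains its extrema at the ``corners''; the whole difficulty is that the feasible region is not an arbitrary box but the set of achievable pairs $(|S_1|,|S_2|)$ coming from compatible pairs, which is constrained by Definition~\ref{df:compatible}.

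\textbf{First step.} I would reduce from the full product configuration to a single Dyck path. Since the configuration decomposes as $(S_1 = \cup_i S_1^i, S_2 = \cup_i S_2^i)$ with each $(S_1^i, S_2^i)$ an independent compatible pair in its own Dyck path $\mathcal{D}^{a_{1,i}\times a_{2,i}}$, and the objective is additive across the index $i$, it suffices to minimize $\xi_f(\text{const} - |S_1^i|) - \omega_f|S_2^i|$ on each factor separately. Thus the whole problem collapses to: \emph{on a single maximal Dyck path $\mathcal{D}^{a_1\times a_2}$, the linear functional $\xi_f|S_1^i| + \omega_f|S_2^i|$ (with a sign flip built into the constants) is extremized among compatible pairs at $(\mathcal{D}_1,\emptyset)$, $(\emptyset,\emptyset)$, or $(\emptyset,\mathcal{D}_2)$}. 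The second case $(\emptyset,\emptyset)$ is always compatible trivially.

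\textbf{The main step (and the main obstacle).} The crux is a monotonicity/closure property: if $(S_1^i, S_2^i)$ is compatible, then so are the configurations obtained by \emph{discarding the opposite type of edges}. Concretely, I expect that $(\emptyset, S_2^i)$ and $(S_1^i, \emptyset)$ are automatically compatible whenever $(S_1^i, S_2^i)$ is, because the compatibility condition \eqref{0407df:comp} is a constraint coupling a horizontal edge $u$ with a vertical edge $v$, and it is vacuously satisfied when one of $S_1^i$, $S_2^i$ is empty. Hence $|S_1^i|$ can be pushed to its maximum $a_{1,i}$ with $S_2^i$ emptied (giving the first distinguished case) or $|S_2^i|$ pushed to $a_{2,i}$ with $S_1^i$ emptied (the third case), and these are always compatible. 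Since the objective is linear in $(|S_1^i|,|S_2^i|)$, whichever of $-\xi_f$, $-\omega_f$ is more advantageous dictates which pure extreme wins, and the sign of $\xi_f$ (resp.\ $\omega_f$) determines whether to maximize or zero out that coordinate. The genuinely delicate point is verifying that one may \emph{simultaneously} drive $|S_1^i|$ up while keeping compatibility, rather than merely emptying a set; this is where I expect the real work to lie, and it should follow from the fact that $(\mathcal{D}_1,\emptyset)$ and $(\emptyset,\mathcal{D}_2)$ are trivially compatible (no mixed pair to check) rather than from a general ``increase one coordinate'' lemma.

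\textbf{Assembling the cases.} Finally I would organize the argument by the signs of $\xi_f$ and $\omega_f$. If $\xi_f \le 0$ the term $-\xi_f|S_1^i|$ is minimized by taking $|S_1^i|$ maximal, i.e.\ $S_1^i = \mathcal{D}_1$; if $\xi_f \ge 0$ it is minimized by $S_1^i = \emptyset$; similarly for $\omega_f$ and $S_2^i$. Because the two extreme configurations $(\mathcal{D}_1,\emptyset)$ and $(\emptyset,\mathcal{D}_2)$ are each compatible on their own, and $(\emptyset,\emptyset)$ always is, the linear minimum over the compatible region is realized at one of exactly these three configurations applied uniformly across all $i$, which is precisely the claim. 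The only subtlety to double-check is that the optimal choice of $S_1^i$ and $S_2^i$ cannot be made \emph{independently} and then combined (a mixed nonempty pair might fail compatibility), which is why the three listed configurations are ``pure'' in one coordinate — and this is exactly what forces the trichotomy rather than a single global formula.
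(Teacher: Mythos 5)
Your reduction to a single Dyck path, the observation that the objective is linear in $(|S_1^i|,|S_2^i|)$, and the case analysis on the signs of $\xi_f$ and $\omega_f$ are all sound, and they do dispose of three of the four sign patterns: whenever at least one of $\xi_f,\omega_f$ is $\le 0$, the unconstrained minimum over the box $0\le|S_1|\le A_{n-1}$, $0\le|S_2|\le A_{n-2}$ already sits at one of the three listed configurations, each of which is (as you correctly note, vacuously) compatible. But there is a genuine gap in the only case with real content, namely $\xi_f>0$ and $\omega_f>0$. There the linear functional wants \emph{both} $|S_1|$ and $|S_2|$ maximal, i.e.\ the box corner $(\mathcal{D}_1,\mathcal{D}_2)$, which is not compatible; the lemma's claim in this case amounts to the inequality
\[
\xi_f|S_1|+\omega_f|S_2|\ \le\ \max\bigl(\xi_f A_{n-1},\ \omega_f A_{n-2}\bigr)
\]
for \emph{every} compatible configuration, and since $\xi_f,\omega_f$ are arbitrary positive integers this is essentially a tradeoff inequality of the shape $|S_1|/a_1+|S_2|/a_2\le 1$ for compatible pairs in $\mathcal{D}^{a_1\times a_2}$. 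That is a nontrivial combinatorial fact about Definition~\ref{df:compatible} — of the same nature as Lemma~\ref{0413lem}, which the paper has to import from \cite{LS3v1} and \cite{LLZ} via an argument with colored subpaths — and nothing in your proposal supplies it.

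Your ``main step'' (discarding all edges of one type preserves compatibility) only proves the easy direction, $M_f\le\min$ of the three candidate values, by exhibiting the three configurations as feasible; it says nothing about why an arbitrary compatible pair with both $S_1$ and $S_2$ large cannot beat both pure corners. You flag the issue yourself (``a mixed nonempty pair might fail compatibility \dots this is exactly what forces the trichotomy''), but failure of compatibility of the single configuration $(\mathcal{D}_1,\mathcal{D}_2)$ does not by itself bound the functional on the remaining compatible pairs — the feasible region could in principle still contain points strictly outside the triangle spanned by $(A_{n-1},0)$, $(0,A_{n-2})$ and the origin. The missing ingredient is precisely the quantitative statement that it does not, which is the substance of \cite[Lemma 3.8]{LS3} (the paper itself only cites that proof, replacing $c_n^{[r]}$ by $A_n$, rather than reproving it).
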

\begin{proof} This follows from the proof of Lemma 3.8 in \cite{LS3} replacing $c^{[r]} _n$ by $A_n$. \end{proof}

\bigskip
Before stating our second formula, we need to introduce some notation.
For arbitrary (possibly negative) integers $A, B$, we define the modified binomial coefficient as follows.
$$\left[\begin{array}{c}{A } \\{B} \end{array}\right] := \left\{ \begin{array}{ll}  \prod_{i=0}^{A-B-1} \frac{A-i}{A-B-i}, & \text{ if }A > B\\ \, & \,  \\   1, & \text{ if }A=B \\ \, & \, \\  0, & \text{ if }A<B.  \end{array}  \right.$$  

If $A\geq 0$ then $\left[\begin{array}{c}{A } \\{B} \end{array}\right]=\gchoose{A}{A-B}$ is just the usual binomial coefficient, in particular  $\left[\begin{array}{c}{A } \\{B} \end{array}\right]= 0$ if $A\ge0$ and $B<0$.

For a sequence of integers $(\tau_j)$ (respectively $(\tau'_j)$), we define a sequence  of weighted partial sums $(s_i)$ (respectively $(s_i'))$ as follows:
$$ \begin{array}{rcccl}
s_0=0,&\quad &s_i=\sum_{j=0}^{i-1} c^{[r]} _{i-j+1} \tau_j = c^{[r]} _{i+1}\tau_0+c^{[r]} _i\tau_1+\cdots+c^{[r]} _2\tau_{i-1};\\ \\
s'_0=0,&\quad &s'_i=\sum_{j=0}^{i-1} c^{[r]} _{i-j+1} \tau_j' = c^{[r]} _{i+1}\tau'_0+c^{[r]} _i\tau'_1+\cdots+c^{[r]} _2\tau'_{i-1}.\end{array}
$$
For example, $s_1=c^{[r]} _2\tau_0=\tau_0$, $s_2= c^{[r]} _3\tau_0+c^{[r]} _2\tau_1=r\tau_0+\tau_1$.

\begin{lemma}
 \label{lem sn}\cite[Lemma 3.15]{LS3}
 $s_n=r s_{n-1} -s_{n-2} +\tau_{n-1}$.
\end{lemma}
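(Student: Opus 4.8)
The plan is to verify directly the recurrence $s_n = r s_{n-1} - s_{n-2} + \tau_{n-1}$ from the definition of the weighted partial sums $s_i = \sum_{j=0}^{i-1} c^{[r]}_{i-j+1}\tau_j$, using only the three-term recurrence $c^{[r]}_{i+1} = r c^{[r]}_i - c^{[r]}_{i-1}$ satisfied by the sequence $(c^{[r]}_n)$. First I would write out the three relevant sums explicitly, aligning them by the index $j$ of $\tau_j$, and then compute the combination $r s_{n-1} - s_{n-2}$ term by term.

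\begin{proof}
Using the definition,
\[
r s_{n-1} - s_{n-2} = r\sum_{j=0}^{n-2} c^{[r]}_{n-1-j+1}\tau_j - \sum_{j=0}^{n-3} c^{[r]}_{n-2-j+1}\tau_j .
\]
For each index $j$ with $0\le j\le n-3$, the coefficient of $\tau_j$ on the right-hand side is
\[
r\,c^{[r]}_{n-j} - c^{[r]}_{n-j-1} = c^{[r]}_{n-j+1},
\]
by the recurrence $c^{[r]}_{i+1}=r c^{[r]}_i - c^{[r]}_{i-1}$ applied with $i=n-j$. This is exactly the coefficient of $\tau_j$ appearing in $s_n=\sum_{j=0}^{n-1} c^{[r]}_{n-j+1}\tau_j$. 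It remains to account for the two boundary terms that do not pair up. The term $j=n-2$ occurs only in $r s_{n-1}$, contributing $r\,c^{[r]}_{2}\,\tau_{n-2} = r\tau_{n-2}$, whereas in $s_n$ the coefficient of $\tau_{n-2}$ is $c^{[r]}_{3}=r$, so these agree. Finally, the term $j=n-1$ appears in $s_n$ with coefficient $c^{[r]}_{2}=1$, giving $\tau_{n-1}$, and does not appear at all in $r s_{n-1} - s_{n-2}$. Collecting contributions,
\[
r s_{n-1} - s_{n-2} = \sum_{j=0}^{n-2} c^{[r]}_{n-j+1}\tau_j = s_n - \tau_{n-1},
\]
which rearranges to the claimed identity $s_n = r s_{n-1} - s_{n-2} + \tau_{n-1}$.
\end{proof}

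The only subtle point, and the step I would check most carefully, is the bookkeeping at the two boundary indices $j=n-2$ and $j=n-1$, where the ranges of summation in $s_n$, $s_{n-1}$, and $s_{n-2}$ differ; the interior terms $0\le j\le n-3$ collapse uniformly via the $c^{[r]}$-recurrence, so no genuine obstacle arises. The identical computation gives the companion identity for the primed sequence $(s'_i)$.
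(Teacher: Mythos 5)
Your computation is correct, and it is the straightforward direct verification one would expect: the paper itself states this lemma without proof, citing \cite[Lemma 3.15]{LS3}, where the same term-by-term comparison using the recurrence $c^{[r]}_{i+1}=rc^{[r]}_i-c^{[r]}_{i-1}$ is carried out. Your handling of the boundary indices $j=n-2$ and $j=n-1$ is the only place requiring care, and you have done it correctly.
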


\bigskip

\begin{definition}\label{20120121}Let $\mathcal{L}( \tau_0,\tau_1,\cdots,\tau_{n-3} ) $
denote the set of all $( \tau'_0,\tau'_1,\cdots,\tau'_{n-3} )\in \mathbb{Z}^{n-2} $ satisfying the conditions
$$\begin{array}{ll} \textup{(1)} & 0\leq\tau'_i\leq \tau_i \text{ for }0\leq i\leq n-4, \\ \\
\textup{(2)}&   s'_{n-3}=k c^{[r]} _{n-2}\textup{ and }
     s'_{n-2} =k c^{[r]} _{n-1} \text{ for some integer }0\leq k\leq p.\end{array}
$$
\end{definition}

We define a partial order on $\mathcal{L}( \tau_0,\tau_1,\cdots,\tau_{n-3} )$ by
$$( \tau'_0,\tau'_1,\cdots,\tau'_{n-3} ) \leq_{\mathcal{L}} ( \tau''_0,\tau''_1,\cdots,\tau''_{n-3} )\text{ if and only if }\tau'_i\leq \tau''_i \text{ for }0\leq i\leq n-4.
$$
Then let $
\mathcal{L}_{\max}( \tau_0,\tau_1,\cdots,\tau_{n-3} )$ be the set of the maximal elements of $
\mathcal{L}( \tau_0,\tau_1,\cdots,\tau_{n-3} )$ with respect to $\leq_{\mathcal{L}}$.

Our second expansion formula is the following.

\begin{theorem}\label{cor 26}\cite[Theorem 3.17]{LS3}
 \noindent Let $\widetilde{x_{1;t}} $ be the cluster variable obtained by mutating $\mathbf{x}_t$ in direction 1.    Let $\omega'_f=\xi_f$ and $\xi'_f$ be the number of arrows from 1 to $f$, or $f$ to 2 respectively, in the quiver obtained from $Q_t$ by mutating in the vertex 1. Then
 \begin{equation}\label{mainformula2b0303}
x_{1;t'}^p x_{2;t'}^q=  \sum_{ \tau_0,\tau_1,\cdots,\tau_{n-3}}\aligned &\left( \prod_{i=0}^{n-3} \left[\begin{array}{c}{{A_{i+1} - rs_i    } } \\{\tau_i} \end{array}\right]   \right)x_{2;t}^ {rs_{n-3}-A_{n-2}} \widetilde{x_{1;t}}^{A_{n-1}-rs_{n-2}}  \prod_{f=3}^N x_{f;t}^{\xi_f' s_{n-2}-\omega_f's_{n-3}-M'_f},
\endaligned\end{equation}
where  the summation runs over all integers $ \tau_0,...,\tau_{n-3}$ satisfying
\begin{equation}\label{cond502b0303}\left\{
\begin{array}{l} 0\leq \tau_i \leq A_{i+1} - rs_i \, (0\leq i\leq n-4),  \ \tau_{n-3} \leq A_{n-2} - rs_{n-3} , \text{ and } \\
\left(s_{n-2}-s'_{n-2}\right) A_{n-3} \geq \left(s_{n-3} -s_{n-3}'\right) A_{n-2} \text{ for any }(\tau'_0,...,\tau'_{n-3})\in\mathcal{L}_{\max}( \tau_0,\cdots,\tau_{n-3} ),
 \end{array} \right.\end{equation}
and
$M_f'=
0 $ if the sequence of full subquivers on vertices 1, 2, $f$  in $\mu$ from $t'$ to $t$ is of almost cyclic type, and 
$M_f'=  \xi'_f A_{n-2} - \omega'_f A_{n-3},$ if the sequence is  of acyclic type from $t'$ up to $\mu_1(t)$. 
In particular, the exponent of $x_{f;t}$ is non-negative.\end{theorem}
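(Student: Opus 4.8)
The plan is to obtain Theorem~\ref{cor 26} as the counterpart, in the cluster $\{\widetilde{x_{1;t}},x_{2;t},x_{3;t},\dots\}$ of the seed $\mu_1(t)$, of the compatible-pairs formula of Theorem~\ref{mainthm12052011}, reducing everything to a two-variable core together with Fomin--Zelevinsky's separation of additions \cite[Theorem 3.7]{FZ4}. First I would isolate the rank~$2$ content: stripping off the variables $x_{f;t}$ with $f\ge 3$, the identity to be proved is the rank~$2$ Laurent expansion of \cite{L}, in which $x_{1;t'}^p x_{2;t'}^q$ is written in $\{\widetilde{x_{1;t}},x_{2;t}\}$ with the modified binomial weights $\prod_i \gchoose{A_{i+1}-rs_i}{\tau_i}$ and the exponents $A_{n-1}-rs_{n-2}$ on $\widetilde{x_{1;t}}$ and $rs_{n-3}-A_{n-2}$ on $x_{2;t}$. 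The passage from a single cluster variable to the general monomial $x_{1;t'}^p x_{2;t'}^q$ is carried out, exactly as in the proof of Lemma~\ref{12312011}, by replacing $c^{[r]}_n$ throughout by $A_n=A_n(p,q)$: by Lemma~\ref{negone} the sequence $A_n$ obeys the same linear recurrence and an analogous quadratic relation as $c^{[r]}_n$, so every step of the rank~$2$ argument survives verbatim. The summation condition is then the inequality in \eqref{cond502b0303}, which, via the weighted partial sums of Definition~\ref{20120121} and Lemma~\ref{lem sn}, is the $\tau$-language translation of the $r$-compatibility of Dyck-path pairs; identifying these two conditions and checking that the modified binomial coefficients count the compatible pairs of each prescribed shape is the substantive combinatorial step, which I expect to be the main obstacle.

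With the rank~$2$ skeleton in hand, I would reintroduce the variables $x_{f;t}$ by separation of additions: the exponent of $x_{f;t}$ is read off from the evolution of the arrows incident to vertex $f$ along the rank~$2$ mutation sequence $\mu$. Lemma~\ref{lem nonacyclic} computes precisely how the numbers of these arrows change under successive mutations in directions $1$ and $2$, and summing their contributions over the sequence yields, term by term, the raw exponent $\xi'_f s_{n-2}-\omega'_f s_{n-3}$ before normalization.

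The remaining point is to pin down the shift $M_f'$ and to prove the non-negativity assertion, and here I would invoke the dichotomy of Definition~\ref{def 12}: the full subquiver on the vertices $1,2,f$ along $\mu$ is either of almost cyclic type or of acyclic type. In the almost cyclic case the minimum of $\xi'_f s_{n-2}-\omega'_f s_{n-3}$ over the admissible indices is attained at the extreme sequence $\tau=0$, where all $s_i$ vanish, so $M_f'=0$; establishing that $0$ is indeed the minimum is exactly the claimed non-negativity of the exponent. In the acyclic case, Remark~\ref{Rem acyclic} locates where the subquiver becomes acyclic and forces the relevant arrow counts to change sign, so that the minimum is the leading contribution $\xi'_f A_{n-2}-\omega'_f A_{n-3}$, giving $M_f'=\xi'_f A_{n-2}-\omega'_f A_{n-3}$; subtracting it leaves $\xi'_f(s_{n-2}-A_{n-2})-\omega'_f(s_{n-3}-A_{n-3})$, which I would show is non-negative using the bounds on the weighted sums $s_i$ imposed by \eqref{cond502b0303} together with the sign information on $\xi'_f,\omega'_f$ supplied by Lemma~\ref{lem nonacyclic}. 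Assembling the rank~$2$ core, the separated $x_{f;t}$-exponents, and the determination of $M_f'$ then yields the stated formula.
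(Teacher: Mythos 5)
Your outline assembles the right ingredients, but it leaves the load-bearing step unproved. The identity \eqref{mainformula2b0303} is not something that falls out of the recurrences for $A_i$ and $s_i$; the assertion that the $\tau$-parametrized sum with the modified binomial weights $\prod_i\gchoose{A_{i+1}-rs_i}{\tau_i}$, restricted by the admissibility condition \eqref{cond502b0303}, reproduces the compatible-pair expansion of Theorem~\ref{mainthm12052011} after rewriting in the cluster $\mu_1(\mathbf{x}_t)$ is precisely the content of \cite[Theorem 3.17]{LS3}, and you explicitly flag it as ``the substantive combinatorial step, which I expect to be the main obstacle'' without carrying it out. The paper does not reprove this either: its proof of Theorem~\ref{cor 26} consists of citing \cite[Theorem 3.17]{LS3} for the formula itself, so the only genuinely new content is the precise value of $M_f'$. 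As a blind proof, your proposal therefore has a gap exactly where the theorem lives; you would need either to invoke \cite{LS3} at this point or to actually establish the bijection/telescoping between the $\tau$-sums and the compatible pairs, including why the second line of \eqref{cond502b0303} (via $\mathcal{L}_{\max}$) is the correct translation of $r$-compatibility.

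For the determination of $M_f'$ and the non-negativity of the exponent of $x_{f;t}$, your route also differs from the paper's and is only sketched. The paper obtains $M_f'$ by matching terms of \eqref{mainformula2b0303} against Theorem~\ref{mainthm12052011} and then using Lemma~\ref{12312011}, which reduces the minimization to three extremal configurations of $(S_1,S_2)$; the non-negativity is then inherited from Theorem~\ref{mainthm12052011}, where the shift $M_f$ is by construction the minimum of the raw exponents. Your plan instead tries to minimize $\xi_f's_{n-2}-\omega_f's_{n-3}$ directly over admissible $\tau$, but as you yourself note, showing that the minimum is $0$ in the almost cyclic case \emph{is} the non-negativity claim, so nothing has been reduced; and in the acyclic case the claim that the minimum equals $\xi_f'A_{n-2}-\omega_f'A_{n-3}$ needs the sign analysis from Lemma~\ref{lem nonacyclic} and Remark~\ref{Rem acyclic} to be made quantitative, which you defer. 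If you want a self-contained argument, the cleaner path is the paper's: transport the known minimizers from Lemma~\ref{12312011} through the term-by-term comparison rather than re-minimizing in the $\tau$-coordinates.
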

\begin{proof}
 The proof is exactly the same as the proof of \cite[Theorem 3.17]{LS3} except for the $M_f'$ in the exponent of $x_{f,t}$. The precise value for $M_f'$ follows by comparing terms with the formula in Theorem~\ref{mainthm12052011} and using Lemma~\ref{12312011}. The statement about non-negative exponents also follows from Theorem~\ref{mainthm12052011}.
\end{proof}
Combining the two formulas of Theorems \ref{mainthm12052011}
and \ref{cor 26}, we get the following mixed formula which has the advantage that the exponents of $\widetilde{x_{1;t}}, x_{1;t}$ and $x_{f;t}$   are nonnegative.

\begin{theorem}\label{mainthm12062011}\cite[Theorem 3.21]{LS3}  
\begin{equation}\label{mainformula22}\aligned
x_{1;t'}^p x_{2;t'}^q= & \sum_{\begin{array}{c} \scriptstyle\tau_0,\tau_1,\cdots,\tau_{n-3} \\ \scriptstyle A_{n-1} -r s_{n-2}\geq 0  \end{array}  }\aligned &\left( \prod_{i=0}^{n-3}{{A_{i+1} - rs_i    }  \choose{\tau_i} }   \right){\widetilde{x_{1;t}}}^ {A_{n-1}-rs_{n-2}} x_{2;t}^{rs_{n-3}-A_{n-2}}    \prod_{f=3}^N x_{f;t}^{\xi_f' s_{n-2}-\omega_f's_{n-3}-M'_f}, 
\endaligned
\\
&+  \sum_{\begin{array}{c} \scriptstyle  (S_1,S_2)\\ 
 \scriptstyle r|S_2|-A_{n-1} >0 \end{array}  }
 x_{1;t}^{r|S_2|-A_{n-1}}x_{2;t}^{r|S_1|-A_{n-2}}  \prod_{f=3}^N x_{f;t}^{\xi_f(A_{n-1}-|S_1|)-\omega_f|S_2|-M_f},
\endaligned\end{equation}
where $(S_1,S_2)$ are as in Theorem~\ref{mainthm12052011}, and where
$M_f'=
0 $ if the sequence of full subquivers on vertices 1, 2, $f$  in $\mu$ is of almost cyclic type, and 
$ M_f= \xi_f A_{n-1} - \omega_f A_{n-2}$ if  the sequence of full subquivers on vertices 1, 2, $f$  in $\mu$ is of acyclic type. In particular, the exponents of $x_{f;t}$ are non-negative.
\end{theorem}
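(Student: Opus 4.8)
The plan is to derive Theorem~\ref{mainthm12062011} by combining the two expansion formulas already established: Theorem~\ref{mainthm12052011} (the Dyck-path/compatible-pair formula) and Theorem~\ref{cor 26} (the formula in terms of $\widetilde{x_{1;t}}$). The guiding observation is that these are two Laurent expansions of the same element $x_{1;t'}^p x_{2;t'}^q$, but with respect to slightly different sets of variables: Theorem~\ref{mainthm12052011} uses $x_{1;t}$ and $x_{2;t}$, whereas Theorem~\ref{cor 26} uses $\widetilde{x_{1;t}}$ (the mutation of $x_{1;t}$) and $x_{2;t}$. The exchange relation at vertex $1$ relates $\widetilde{x_{1;t}}$ and $x_{1;t}$, so each formula can be rewritten in the other's variables. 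The point of the mixed formula is to split the sum according to the sign of the exponent $A_{n-1}-r|S_2|$ (equivalently $r|S_2|-A_{n-1}$) and to use whichever of the two formulas keeps that exponent nonnegative in each regime.

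The key steps I would carry out are as follows. First I would recall the exchange relation $x_{1;t}\,\widetilde{x_{1;t}} = (\text{monomial in } x_{2;t}, x_{f;t}) + (\text{monomial})$ at vertex $1$ of $Q_t$, giving the substitution that converts a power of $\widetilde{x_{1;t}}$ into an expression in $x_{1;t}, x_{2;t}, x_{f;t}$ and vice versa. Second, I would take the formula of Theorem~\ref{mainthm12052011} and isolate the terms where the exponent of $x_{1;t}$, namely $r|S_2|-A_{n-1}$, is strictly positive; these are exactly the terms in the second sum of \eqref{mainformula22}, where $x_{1;t}$ genuinely appears with a positive power and no substitution is needed. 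Third, for the complementary terms — those with $r|S_2|-A_{n-1}\le 0$, where $x_{1;t}$ appears with a nonpositive power — I would use the identification of Theorem~\ref{cor 26}: these terms reassemble into the first sum of \eqref{mainformula22}, expressed using $\widetilde{x_{1;t}}^{A_{n-1}-rs_{n-2}}$ with nonnegative exponent $A_{n-1}-rs_{n-2}\ge 0$, the binomial coefficients $\binom{A_{i+1}-rs_i}{\tau_i}$, and the weighted partial sums $s_i$ from the paragraph preceding Lemma~\ref{lem sn}. Finally, I would verify that the exponents of the frozen-type variables $x_{f;t}$ agree in both pieces, using the almost-cyclic/acyclic dichotomy of Definition~\ref{def 12} to pin down $M_f$ and $M_f'$, and confirm the asserted nonnegativity of all exponents of $\widetilde{x_{1;t}}, x_{1;t}$, and $x_{f;t}$.

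The main obstacle I anticipate is the bookkeeping needed to show that the two sums genuinely partition the single expansion without overlap or omission, i.e.\ that splitting Theorem~\ref{mainthm12052011} along the sign of $r|S_2|-A_{n-1}$ yields precisely the $\widetilde{x_{1;t}}$-sum of Theorem~\ref{cor 26} on one side. This requires matching the compatible-pair indexing of the first formula with the $(\tau_0,\dots,\tau_{n-3})$ indexing of the second, which is exactly the content of the constraints \eqref{cond502b0303} involving $\mathcal{L}_{\max}$; the conditions $s'_{n-3}=kc^{[r]}_{n-2}$, $s'_{n-2}=kc^{[r]}_{n-1}$ encode which compatible pairs correspond to the boundary regime. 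Since the bulk of this correspondence is already proved in \cite{LS3}, the realistic plan is to invoke \cite[Theorem 3.21]{LS3} directly and only supply the refined values of $M_f$ and $M_f'$: these follow by comparing the $x_{f;t}$-exponents term by term between the two formulas and applying Lemma~\ref{12312011} to identify which extremal compatible pair realizes the minimum, exactly as in the proof of Theorem~\ref{cor 26}. The nonnegativity statement then follows from the corresponding nonnegativity already recorded in Theorems~\ref{mainthm12052011} and~\ref{cor 26}.
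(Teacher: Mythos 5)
Your proposal is correct and follows essentially the same route as the paper, which states this result by citing \cite[Theorem 3.21]{LS3} and describes it precisely as the combination of Theorems~\ref{mainthm12052011} and~\ref{cor 26}, splitting according to the sign of $r|S_2|-A_{n-1}$ (equivalently $A_{n-1}-rs_{n-2}$) and reading off $M_f$ and $M_f'$ via Lemma~\ref{12312011} as in the proof of Theorem~\ref{cor 26}. Your plan to defer the matching of the two indexings to \cite{LS3} and only supply the refined exponent constants and the nonnegativity claim is exactly what the paper does.
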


\begin{corollary}
 \label{cor 2.11} For any cluster monomial $u$ in the variables of $\mathbf{x'}$, there exist two  polynomials 
 \[f_1\in \mathbb{Z}_{\ge 0}\mathbb{P}[ \widetilde{x_{1;t}},x_{2;t}^{\pm1},x_{3;t},\ldots, x_{N;t}] \quad\textup{and} \quad  f_2 \in \mathbb{Z}_{\ge 0}\mathbb{P}[ {x_{1;t}},x_{2;t}^{\pm1},x_{3;t},\ldots, x_{N;t}]\] such that 
 \[u= f_1 +f_2.\]
\end{corollary}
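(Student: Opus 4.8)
The plan is to combine the two expansion formulas of Theorems~\ref{mainthm12052011} and \ref{cor 26} (or, more directly, the mixed formula of Theorem~\ref{mainthm12062011}) and to reduce the statement for an arbitrary cluster monomial to the case of a single power product $x_{1;t'}^p x_{2;t'}^q$. First I would observe that a cluster monomial $u$ in the variables of $\mathbf{x}'=\mathbf{x}_{t'}$ has the form $u = x_{1;t'}^p x_{2;t'}^q \cdot \prod_{f=3}^N x_{f;t'}^{d_f}$ with all exponents nonnegative, since $u$ is a monomial. Because $x_{f;t'}=x_{f;t}$ for all $f=3,\dots,N$ (as noted just before Theorem~\ref{mainthm12052011}), the factor $\prod_{f\ge 3} x_{f;t'}^{d_f}$ is already a genuine monomial in the variables $x_{3;t},\dots,x_{N;t}$ with nonnegative exponents, so it may be pulled out as a common factor and will not affect membership in the two polynomial rings.

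The heart of the argument is then to expand $x_{1;t'}^p x_{2;t'}^q$ using Theorem~\ref{mainthm12062011}. That formula already writes this product as a sum of exactly two pieces: the first sum is a $\mathbb{Z}_{\ge 0}\mathbb{P}$-linear combination of monomials in $\widetilde{x_{1;t}}$, $x_{2;t}^{\pm 1}$, and $x_{3;t},\dots,x_{N;t}$, in which (by the final assertion of that theorem) the exponents of the $x_{f;t}$ are nonnegative and the exponents of $\widetilde{x_{1;t}}$ are nonnegative by the summation condition $A_{n-1}-rs_{n-2}\ge 0$; the second sum is a $\mathbb{Z}_{\ge 0}\mathbb{P}$-linear combination of monomials in $x_{1;t}$, $x_{2;t}^{\pm 1}$, and $x_{3;t},\dots,x_{N;t}$, in which the exponents of $x_{1;t}$ are positive (by the condition $r|S_2|-A_{n-1}>0$) and the exponents of the $x_{f;t}$ are again nonnegative. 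I would set $f_1$ equal to the first sum, multiplied by the common factor $\prod_{f\ge 3}x_{f;t}^{d_f}$, and $f_2$ equal to the second sum times the same factor. By construction $f_1\in \mathbb{Z}_{\ge 0}\mathbb{P}[\widetilde{x_{1;t}},x_{2;t}^{\pm 1},x_{3;t},\dots,x_{N;t}]$ and $f_2\in \mathbb{Z}_{\ge 0}\mathbb{P}[x_{1;t},x_{2;t}^{\pm 1},x_{3;t},\dots,x_{N;t}]$, and $u=f_1+f_2$.

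The main point requiring care, and the step I expect to be the only real obstacle, is verifying that all the relevant exponents have the claimed signs so that $f_1$ and $f_2$ genuinely lie in the stated rings (polynomial in all variables except $x_{2;t}$, which is allowed to appear with negative powers). For $f_1$ this means checking that the binomial coefficients are the usual nonnegative ones and that the exponents of $\widetilde{x_{1;t}}$ and of each $x_{f;t}$ are nonnegative; for $f_2$ it means checking positivity of the $x_{1;t}$-exponent and nonnegativity of the $x_{f;t}$-exponents. All of these are exactly the assertions contained in the final sentences of Theorems~\ref{mainthm12062011}, \ref{cor 26}, and \ref{mainthm12052011}, together with the summation constraints \eqref{cond502b0303}, so no new estimates are needed; one only has to invoke them. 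Finally, multiplying by the monomial common factor $\prod_{f\ge 3}x_{f;t}^{d_f}$ preserves membership in both rings because it only adds nonnegative amounts to the exponents of $x_{3;t},\dots,x_{N;t}$, completing the proof.
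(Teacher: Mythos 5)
Your argument is correct and is exactly the route the paper intends: the corollary is stated as an immediate consequence of the mixed formula of Theorem~\ref{mainthm12062011}, whose two sums (with the sign conditions $A_{n-1}-rs_{n-2}\ge 0$ and $r|S_2|-A_{n-1}>0$, and the nonnegativity of the $x_{f;t}$-exponents asserted there) furnish $f_1$ and $f_2$ after pulling out the monomial factor $\prod_{f\ge 3}x_{f;t}^{d_f}$. Nothing further is needed.
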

\bigskip
We end this subsection with the following rank 2 result which we will need later.

\begin{theorem}\label{thm01312012}\cite[Theorem 3.26]{LS3} Let $a\geq \frac{A_n}{r}$ be an integer. Then the sum
$$\sum_{ \begin{array}{c}\scriptstyle \tau_0,\tau_1,\cdots,\tau_{n-2}\\ \scriptstyle s_{n-1}=a \end{array} }\prod_{i=0}^{n-2} \left[\begin{array}{c}{{A_{i+1} - rs_i    } } \\{\tau_i} \end{array}\right]   x_1^ {rs_{n-2}-A_{n-1}} x_2^{r(A_{n-1}-a)-A_{n-2}}$$ 
where the summation runs over all integers $\tau_0,\ldots,\tau_{n-2}$ satisfying (\ref{cond502b0303}) with $n$ replaced by $n+1$,
is divisible by $(1+x_1^r)^{ra-A_n}$ and the resulting quotient has nonnegative coefficients.
\end{theorem}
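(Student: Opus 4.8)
The plan is to treat the asserted expression as a Laurent polynomial in the single variable $x_1$. Since $a$ is fixed, the exponent $r(A_{n-1}-a)-A_{n-2}$ of $x_2$ is constant and may be pulled out of the sum, and all exponents $rs_{n-2}-A_{n-1}$ of $x_1$ are congruent modulo $r$. Writing $y=x_1^r$, the claim reduces to showing that the honest polynomial $R(y)=\sum_{\tau}\big(\prod_{i=0}^{n-2}\gchoose{A_{i+1}-rs_i}{\tau_i}\big)\,y^{s_{n-2}}$, summed over the tuples $\tau=(\tau_0,\dots,\tau_{n-2})$ satisfying \eqref{cond502b0303} (with $n$ replaced by $n+1$) together with $s_{n-1}=a$, is divisible by $(1+y)^{ra-A_n}$ with nonnegative quotient. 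First I would record that under \eqref{cond502b0303} every modified binomial coefficient $\gchoose{A_{i+1}-rs_i}{\tau_i}$ equals an ordinary nonnegative binomial coefficient, so the coefficients of $R$ are manifestly nonnegative; thus the only genuine content is the divisibility statement together with the nonnegativity of the quotient. The shape of the divisor is no accident: the expression is the coefficient-free rank~$2$ shadow of Theorem~\ref{cor 26}, and $(1+x_1^r)$ is precisely the rank~$2$ exchange relation, with $ra-A_n$ measuring how often it can be applied.

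The heart of the argument is an induction on $n$. By Lemma~\ref{lem sn} the constraint $s_{n-1}=a$ forces $\tau_{n-2}=a-rs_{n-2}+s_{n-3}$, so the top summation variable is determined by $s_{n-2}$ and $s_{n-3}$. This lets me peel it off and rewrite $R(y)$ as a sum over $\tau_0,\dots,\tau_{n-3}$ of a product of the earlier binomials times the single factor $\gchoose{A_{n-1}-rs_{n-2}}{a-rs_{n-2}+s_{n-3}}\,y^{s_{n-2}}$. The remaining sum over $\tau_0,\dots,\tau_{n-3}$ is, after the index bookkeeping, an instance of the formula at level $n-1$, to which the induction hypothesis applies and supplies a divisor $(1+y)^{e}$ for the appropriate lower power $e$. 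The recurrences $A_i=rA_{i-1}-A_{i-2}$ and $c^{[r]}_i=rc^{[r]}_{i-1}-c^{[r]}_{i-2}$ (Lemmas~\ref{lem cn} and \ref{negone}(a)) are used to align the indices, while Lemma~\ref{negone}(b), $A_i^2-A_{i+1}A_{i-1}=p^2+q^2+rpq$, controls the remaining $y$-degree so that the number of extracted factors adds up correctly.

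To produce the last factor(s) of $(1+y)$ I would isolate the dependence on $\tau_{n-3}$ (equivalently on $s_{n-2}-s_{n-3}$) and recognize a Chu--Vandermonde-type convolution $\sum_k \gchoose{\cdot}{k}\gchoose{\cdot}{\cdot-k}$ of binomial coefficients that collapses to a single factor $(1+y)$ while preserving nonnegativity. Here the defining inequality of $\mathcal{L}_{\max}$ from Definition~\ref{20120121} and \eqref{cond502b0303}, which with $n$ replaced by $n+1$ reads $(s_{n-1}-s'_{n-1})A_{n-2}\ge (s_{n-2}-s'_{n-2})A_{n-1}$, is exactly what guarantees that the peeled-off index stays inside the admissible window, so that no cancellation or negative coefficient is introduced by the collapse.

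The step I expect to be the main obstacle is the bookkeeping that the extracted power is \emph{precisely} $ra-A_n$, and that the $\mathcal{L}_{\max}$ condition of \eqref{cond502b0303} is correctly inherited at level $n-1$. A nonzero polynomial can be divisible by $(1+y)^{ra-A_n}$ only if its $y$-degree span is at least $ra-A_n$, so one must verify, using the identities of Lemma~\ref{negone}, that the range of $s_{n-2}$ over the admissible $\tau$ has exactly the right length and that the inductive divisor combines with the freshly extracted factor without over- or under-counting. Once this index matching is pinned down, nonnegativity of the quotient follows formally, since each operation performed --- factoring out the constant $x_2$-power, the Vandermonde collapse, and the inductive division --- preserves nonnegativity of coefficients, the last because division by $(1+y)$ of a polynomial that is genuinely divisible by it and has nonnegative coefficients along a single residue class again yields nonnegative coefficients.
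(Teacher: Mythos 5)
First, a point of reference: the paper does not prove Theorem~\ref{thm01312012} at all --- it is imported verbatim from \cite[Theorem 3.26]{LS3} and used as a black box in section~\ref{sect 5.3} --- so your attempt has to be measured against the (multi-page) inductive proof in that reference rather than against anything in this document. Against that standard, two of your claims are genuinely wrong or unsupported. The first is the assertion that every summand is ``manifestly nonnegative'' because each $\gchoose{A_{i+1}-rs_i}{\tau_i}$ is an ordinary binomial coefficient. Condition (\ref{cond502b0303}) (with $n$ replaced by $n+1$) imposes $0\le\tau_i\le A_{i+1}-rs_i$ only for $i\le n-3$; for $i=n-2$ it imposes only the upper bound $\tau_{n-2}\le A_{n-1}-rs_{n-2}$, and the summation range genuinely contains tuples with $rs_{n-2}>A_{n-1}$ (these are exactly the terms with positive $x_1$-exponent, which in Theorem~\ref{mainthm12062011} have to be replaced by the compatible-pair sum). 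For such terms $\gchoose{A_{n-1}-rs_{n-2}}{\tau_{n-2}}$ is a modified binomial coefficient with negative top entry and can be negative (e.g.\ $\gchoose{-1}{-2}=-1$), so nonnegativity of the quotient is not a formal consequence of termwise nonnegativity; dealing with these terms is part of the content of the theorem.

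The second, and fatal, gap is the inductive step. After using Lemma~\ref{lem sn} to solve $\tau_{n-2}=a-rs_{n-2}+s_{n-3}$, what remains is
$$\sum_{\tau_0,\dots,\tau_{n-3}}\Bigl(\prod_{i=0}^{n-3}\gchoose{A_{i+1}-rs_i}{\tau_i}\Bigr)\gchoose{A_{n-1}-rs_{n-2}}{a-rs_{n-2}+s_{n-3}}\,y^{s_{n-2}},$$
in which $s_{n-2}$ is \emph{free} (it is the exponent of $y$) and the peeled-off factor depends jointly on $s_{n-2}$ and $s_{n-3}$. The level-$(n-1)$ statement, by contrast, fixes $s_{n-2}$ equal to a constant and has $y$-exponent $s_{n-3}$; the two sums are not instances of one another, and no amount of ``index bookkeeping'' identifies them. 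Stratifying by the value $\sigma=s_{n-2}$ does not rescue the argument: divisibility of $\sum_\sigma c_\sigma y^\sigma$ by $(1+y)^{ra-A_n}$ is a global condition on the entire coefficient vector and cannot be checked stratum by stratum, since each stratum contributes a single monomial in $y$. Finally, the ``Chu--Vandermonde collapse'' that is supposed to generate the factors of $(1+y)$ is never exhibited: the clean identity $\sum_{\tau}\binom{N}{\tau}y^{\tau}=(1+y)^N$ requires $\tau$ to range over a full interval with $y$-exponent equal to $\tau$ itself, whereas here the ranges of the $\tau_i$ are coupled to one another through the $s_i$ and through the $\mathcal{L}_{\max}$ inequality of Definition~\ref{20120121}, and the $y$-exponent is the weighted sum $s_{n-2}$. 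Until you write down the precise identity being invoked and verify its compatibility with the $\mathcal{L}_{\max}$ constraint, the proof does not go through.
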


\section{Main result}\label{sect 3}
In this section we present our main results. The positivity conjecture (Theorem~\ref{11192011thm}) follows from the following result.

\begin{theorem}
 \label{thm1}
 Let $\mathcal{A}$ be a skew-symmetric cluster algebra of geometric type 
 and let $\mathbf{x}_{t}=\{x_{1;t},x_{2;t},\ldots,x_{N;t}\}$ be a cluster in $\mathcal{A}$.
Let $u$ be a cluster variable and let  $\mathbf{x}_{t_0} $ be a cluster containing $u$  such that the distance between $t_0$ and $t$ in the exchange tree of labeled seeds is minimal. Let $\mu$ be the unique sequence of mutations relating the seeds $t_0$ and $t$ in the exchange tree of labeled seeds. Denote by $d',d$ the directions of the last two mutations in the sequence $\mu$, thus \[\xymatrix{\mu =&t_0\ar@{.}[r]&\cdot\ar@{-}[r]&t''\ar@{-}[r]^{d'}&t'\ar@{-}[r]^d&t}
\] 
and let $e\in\{1,2,\ldots,n\}, e\ne d,d'$.  
  Let $\widetilde {x_{d;t}}$, $\widetilde {x_{e;t}}$ be the cluster variables obtained from $\mathbf{x}_t$ by mutation in direction $d,e$, respectively, and let  $\widetilde{\widetilde {x_{e;t}}}$ be the cluster variable obtained from $\mathbf{x}_t$ by the two step mutation first in $d$  and then in $e$.
   Then  there exist  polynomials
   \[
\begin{array}{rclcrcl}
A_t\in\mathbb{Z}_{\ge 0}\mathbb{P} [ \widetilde {x_{d;t}},\widetilde{\widetilde {x_{e;t}}};(x_{f;t}^{\pm 1})_{ f\ne d,e}],
&\quad&
B_t\in\mathbb{Z}_{\ge 0}\mathbb{P} [ \widetilde {x_{d;t}}, {{x_{e;t}}};(x_{f;t}^{\pm 1})_{ f\ne d,e}],
\\
\\
C_t\in\mathbb{Z}_{\ge 0}\mathbb{P} [ {x_{d;t}},{{x_{e;t}}};(x_{f;t}^{\pm 1})_{ f\ne d,e}],
&\quad&
D_t\in\mathbb{Z}_{\ge 0}\mathbb{P} [ {x_{d;t}},  \widetilde{x_{e;t}};(x_{f;t}^{\pm 1})_{ f\ne d,e}].
\end{array}
   \]
   such that  
   \[u= A_t+B_t+C_t+D_t.\] 
   Moreover,  
   the polynomials $A_t,B_t,C_t,D_t$ are unique up to intersection of polynomial rings.
   In particular 
   \[ u\in \mathbb{Z}_{\ge 0}\mathbb{P} [ {x_{d;t}},{{x_{e;t}}},  \widetilde {x_{d;t}},\widetilde {x_{e;t}},\widetilde{\widetilde {x_{e;t}}};(x_{f;t}^{\pm 1})_{ f\ne d,e}].\] 
   \end{theorem}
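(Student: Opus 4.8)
The plan is to prove Theorem~\ref{thm1} by induction on the number of rank 2 mutation subsequences in the sequence $\mu$ from $t_0$ to $t$, exactly as outlined in the introduction. The base case is when $u$ itself lies in the cluster $\mathbf{x}_t$ or one rank 2 sequence away, where the claim reduces to Corollary~\ref{cor 2.11}. For the inductive step, I would factor the sequence $\mu$ through an intermediate seed $t^\ast$ lying two rank 2 sequences away from $t$, write $\mu = \mu' \circ \sigma_2 \circ \sigma_1$ where $\sigma_1,\sigma_2$ are the two final rank 2 sequences (using directions, say, $\{d',e'\}$ then $\{d,e\}$), and apply the induction hypothesis to the sub-sequence $\mu'$ to express $u$ as a sum of four positive Laurent polynomials in clusters close to $t^\ast$.

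The heart of the argument is then a two-stage substitution. First I would use the single-cluster version of the expansion (Corollary~\ref{cor 2.11}) for the sequence $\sigma_2$ to write each cluster variable appearing in the intermediate expansion $\mathcal{L}_1$ as $f_1 + f_2$, a sum of two Laurent polynomials in two adjacent clusters such that the variables not common to both clusters occur only with positive powers. Concretely, these two clusters are the one containing $\widetilde{x_{d;t}}$ and the one containing $x_{d;t}$, and the ``directions not common to both'' are $d$ and $e$, which is why the second step involving $e$ is needed to resolve $\widetilde{x_{e;t}}$ and $\widetilde{\widetilde{x_{e;t}}}$. Second, I would apply Theorem~\ref{mainthm12062011} (the mixed formula) along the sequence $\sigma_1$ to each of the summands, substituting to land in the four target clusters determined by the pairs $(\widetilde{x_{d;t}},\widetilde{\widetilde{x_{e;t}}})$, $(\widetilde{x_{d;t}},x_{e;t})$, $(x_{d;t},x_{e;t})$, $(x_{d;t},\widetilde{x_{e;t}})$, which are precisely the coordinate rings in which $A_t,B_t,C_t,D_t$ are required to live.

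I would then verify the positivity bookkeeping, which is the technical crux: after substitution, every coefficient is a nonnegative integer combination of elements of $\mathbb{P}$ (this is inherited from the positivity of the individual rank 2 formulas, together with rank 2 positivity which the introduction invokes to close the loop), and the variables $x_{d;t},x_{e;t},\widetilde{x_{d;t}},\widetilde{x_{e;t}},\widetilde{\widetilde{x_{e;t}}}$ that are not common to all four clusters appear only with positive powers. The latter is exactly the content controlled by the ``$M_f$ is the minimum'' and ``exponents of $x_{f;t}$ are non-negative'' assertions in Theorems~\ref{mainthm12052011}, \ref{cor 26}, and \ref{mainthm12062011}; the divisibility statement of Theorem~\ref{thm01312012} is what guarantees that when one mixes the two representations the cancellations that would produce negative exponents never create negative coefficients. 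The uniqueness ``up to intersection of polynomial rings'' should follow from a separation-of-variables argument: the monomials in $A_t,B_t,C_t,D_t$ are distinguished by the signs of the exponents of $\widetilde{x_{d;t}},x_{d;t},\widetilde{\widetilde{x_{e;t}}},\widetilde{x_{e;t}}$, so any overlap is forced to lie in the common subring where all these exponents vanish.

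The main obstacle I anticipate is the combinatorial control of exponent positivity after the \emph{two-fold} substitution. Each individual rank 2 formula certifies positivity of the non-common variables for \emph{one} rank 2 step, but composing two steps means the output of the first substitution (with its $\widetilde{x_{1;t}}$-type variables and binomial coefficients $\left[\begin{array}{c}{A_{i+1}-rs_i}\\{\tau_i}\end{array}\right]$) becomes the \emph{input} to the second, and one must show the positive-power property survives. This is where Theorem~\ref{thm01312012}'s divisibility by $(1+x_1^r)^{ra-A_n}$ with nonnegative quotient is indispensable: it is precisely the statement that allows a term with a potentially negative power of $x_{d;t}$ to be rewritten, after dividing out the appropriate binomial factor, as a genuinely positive Laurent polynomial in the adjacent cluster, thereby feeding cleanly into the mixed formula. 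Keeping track of the four cases ($A_t,B_t,C_t,D_t$) simultaneously while ensuring each substitution preserves both coefficient-positivity and exponent-positivity is the delicate part; everything else is assembling the already-established rank 2 machinery in the correct order.
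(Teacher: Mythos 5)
Your plan reproduces the paper's own outline (induction on the number of rank $2$ subsequences, expansion at an intermediate seed $t^*$ via the induction hypothesis, substitution of the rank $2$ formulas, then repair of the negative exponents of $x_{e;t}$ using the divisibility statement of Theorem~\ref{thm01312012}), so the architecture is right; but the proposal stops exactly where the actual proof begins. Two essential ingredients are missing. First, after substituting one obtains $u=B_t'+C_t'$ with $B_t'$ a positive Laurent polynomial in $\widetilde{x_{d;t}}$ and $C_t'$ one in $x_{d;t}$, each possibly carrying negative powers of $x_{e;t}$; the paper must prove (Proposition~\ref{prop2.13}) that negative powers of $x_{e;t}$ occur in at most one of $B_t'$, $C_t'$, and this requires a four-way case analysis on whether the two final rank $2$ quiver sequences are of almost cyclic or acyclic type, together with long chains of inequalities (e.g.\ $\omega_1A_{n_1-1;1}>\xi_1A_{n_1-2;1}$ and its variants) controlling the composite exponent of $x_{e;t}$. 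Your proposal never addresses why that composite exponent is controlled; saying positivity is ``inherited from the positivity of the individual rank 2 formulas'' is not enough, because each rank $2$ formula only certifies non-negativity of the exponents of the \emph{frozen} directions for its own step, and $e$ is an active (not frozen) direction in the earlier rank $2$ step, so its exponent there is genuinely negative and must be beaten by the contribution of the later step.

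Second, the application of Theorem~\ref{thm01312012} is not a direct substitution. To conclude that the negative-$x_{e;t}$ part of $B_t'$ is divisible by $\widetilde{\widetilde{x_{e;t}}}^{\,\theta}$ (and hence is an honest $A_t$), the paper must first show (Propositions~\ref{prop2.14} and \ref{prop2.16}, Lemma~\ref{0303lem945}) that the product of binomial coefficients produced by the second substitution can be rewritten as $\sum_j d_j\binom{b-m}{j}$ with $d_j\ge 0$, where $m=s_{n_1-2;1}$ and $b=\lfloor(A_{n_1-1;1}-\varsigma)A_{n_1-1;1}/A_{n_1;1}\rfloor$; only then does Lemma~\ref{0303lem944} let one trade $h$ powers of the divisor for the multiplier $p(m)$ while keeping the quotient non-negative. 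That rewriting, resting on the inequalities (\ref{eq page 33 bis}) and (\ref{eq page 33}) and their case analysis on $\xi_1,r_1,\omega_1$, is the technical heart of the proof, and nothing in your proposal substitutes for it. There is also a small but real misdirection: the divisibility of Theorem~\ref{thm01312012} is deployed to absorb negative powers of $x_{e;t}$ (producing the factor $\widetilde{\widetilde{x_{e;t}}}^{\,\theta}$), not negative powers of $x_{d;t}$, which are already separated by the two sums of the mixed formula of Theorem~\ref{mainthm12062011}.
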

The proof of this theorem is given in section \ref{sect proof}. The positivity conjecture follows easily.

\begin{theorem}[Positivity Conjecture]\label{11192011thm}
Let $\mathcal{A}(Q)$ be a skew-symmetric   cluster algebra,
 let $\mathbf{x}_{t}$ be any cluster and let $u$ be any cluster variable. Then the Laurent expansion of $u$  with respect to the cluster $\mathbf{x}_t$ is a Laurent polynomial in $\mathbf{x}_t$ whose coefficients are non-negative integer linear combinations of elements of $\mathbb{P}$.
\end{theorem}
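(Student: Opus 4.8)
The plan is to derive Theorem~\ref{11192011thm} as an essentially immediate corollary of Theorem~\ref{thm1} together with the rank~2 positivity that is already available in the literature. The key observation is that Theorem~\ref{thm1} already places $u$ inside the ring
\[
\mathbb{Z}_{\ge 0}\mathbb{P}[\, x_{d;t},x_{e;t},\widetilde{x_{d;t}},\widetilde{x_{e;t}},\widetilde{\widetilde{x_{e;t}}};(x_{f;t}^{\pm1})_{f\ne d,e}\,],
\]
so that $u$ is written as a Laurent polynomial in $\mathbf{x}_t$ together with the five extra cluster variables $\widetilde{x_{d;t}},\widetilde{x_{e;t}},\widetilde{\widetilde{x_{e;t}}}$ (and the negative powers only of the frozen-like directions $f\ne d,e$), with all coefficients nonnegative. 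The whole task is therefore to rewrite these five extra variables back in terms of $\mathbf{x}_t$ with positive coefficients, and then substitute.

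First I would note that each of $\widetilde{x_{d;t}}$ and $\widetilde{x_{e;t}}$ is obtained from $\mathbf{x}_t$ by a \emph{single} mutation, so the exchange relation \eqref{exchange relation} expresses it directly as a positive Laurent polynomial in $\mathbf{x}_t$ (the numerator is a sum of two monomials with positive coefficients, divided by $x_{d;t}$ respectively $x_{e;t}$). Hence substituting these two introduces no negativity. The only genuinely non-elementary variable is $\widetilde{\widetilde{x_{e;t}}}$, which lies two mutations away from $t$, along the rank~2 sequence in directions $d$ and $e$. To handle it, I would apply the rank~2 expansion machinery: $\widetilde{\widetilde{x_{e;t}}}$ is a cluster variable reached from $\mathbf{x}_t$ by a mutation sequence using only the two directions $d,e$, so by the rank~2 formulas of Section~\ref{sect rank 2} (Theorem~\ref{mainthm12052011} or Theorem~\ref{mainthm12062011}), it expands as a positive Laurent polynomial in $\mathbf{x}_t$. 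Crucially, the statement of Theorem~\ref{thm1} guarantees that $\widetilde{\widetilde{x_{e;t}}}$ occurs in $A_t$ only with \emph{nonnegative} powers, so after substitution no cancellation from negative exponents can destroy positivity.

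The substitution step then proceeds by inserting the positive Laurent expansions of $\widetilde{x_{d;t}}$, $\widetilde{x_{e;t}}$, and $\widetilde{\widetilde{x_{e;t}}}$ into $A_t+B_t+C_t+D_t$. Because every extra variable appears only with nonnegative exponents in its respective summand, and because each of these expansions is a nonnegative combination of monomials in $\mathbf{x}_t$ over $\mathbb{Z}_{\ge0}\mathbb{P}$, the resulting expression for $u$ is a sum of products of nonnegative Laurent polynomials, hence itself a Laurent polynomial in $\mathbf{x}_t$ with coefficients in $\mathbb{Z}_{\ge0}\mathbb{P}$. By the Laurent phenomenon (Theorem~\ref{Laurent}) this coincides with the unique Laurent expansion of $u$ in $\mathbf{x}_t$, which is therefore positive, proving the conjecture. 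I would also record the reduction from the skew-symmetric geometric-type setting of Theorem~\ref{thm1} to the general skew-symmetric case stated in Theorem~\ref{11192011thm}: by the separation-of-addition formula this follows by specializing the coefficients, since positivity in the universal geometric-type (principal-coefficient) setting implies positivity for arbitrary semifield $\mathbb{P}$.

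The main obstacle, and the point requiring care rather than new ideas, is ensuring that positivity is genuinely preserved under substitution. This is exactly why the precise form of Theorem~\ref{thm1} matters: had $\widetilde{\widetilde{x_{e;t}}}$ (or $\widetilde{x_{d;t}},\widetilde{x_{e;t}}$) appeared with negative exponents, substituting a polynomial for it could produce denominators whose expansion need not be positive, and the argument would collapse. Thus the real content has been pushed into Theorem~\ref{thm1}; the present theorem is the clean harvest, and the only thing to verify carefully is the bookkeeping that every auxiliary variable enters positively so that the rank~2 positive expansions can be freely plugged in.
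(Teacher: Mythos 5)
Your proposal is correct and follows essentially the same route as the paper: reduce to geometric type via separation of addition, invoke Theorem~\ref{thm1} to write $u$ with only non-negative exponents on the auxiliary variables, and then substitute their positive rank-2 Laurent expansions (Theorem~\ref{mainthm12052011}) into $\mathbf{x}_t$. The only difference is that you apply Theorem~\ref{thm1} at the seed $t$ itself (so the auxiliary variables are at most two mutations away), whereas the paper applies it at the seed $t_1$ one step into the final maximal rank-2 subsequence and then expands along that whole subsequence; this is precisely the ``direct'' variant acknowledged in the remark following the paper's proof, and it requires $N>2$ so that a direction $e\ne d,d'$ exists (the rank-2 case being already covered by Theorem~\ref{mainthm12052011}).
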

\begin{proof} Because of Fomin-Zelevinsky's separation of addition formula \cite[Theorem 3.7]{FZ4}, it suffices to prove the result in the case where $\mathcal{A}(Q)$ is of geometric type. Let $\mathbf{x}_{t_0}$ be an arbitrary cluster, and let $u\in\mathbf{x}_{t_0}$ be a cluster variable. Let  $\mu$ be the unique sequence of mutations relating the seed $t_0$ to the  seed $t$ in the exchange tree of labeled seeds. Let $d,e$ be the last two directions in the sequence $\mu$. Consider the maximal rank two mutation subsequence in directions $e,d$ at the end of $\mu$. This subsequence connects $t$ to a seed $t_1'$, and we denote by $t_1$ the seed one step away from $t_1'$ on this subsequence. Thus we have
 \[\xymatrix{\mu =&t_0\ar@{.}[r]&\cdot\ar@{-}[r]^{d'}&t_1'\ar@{-}[r]^{e}&t_1\ar@{-}[r]^d&\cdot\ar@{-}[r]^e&\cdot\ar@{-}[r]^d&\cdot\ar@{.}[r]&\cdot\ar@{-}[r]^d&\cdot\ar@{-}[r]^e&t}
\] 
or 
 \[\xymatrix{\mu =&t_0\ar@{.}[r]&\cdot\ar@{-}[r]^{d'}&t_1'\ar@{-}[r]^{d}&t_1\ar@{-}[r]^e&\cdot\ar@{-}[r]^d&\cdot\ar@{-}[r]^e&\cdot\ar@{.}[r]&\cdot\ar@{-}[r]^d&\cdot\ar@{-}[r]^e&t}
\] 
with $d'\ne d,e$. Then
applying Theorem \ref{thm1} at the seed $t_1$ with respect to the directions $d$ and $e$, we get that  $$u\in 
\mathbb{Z}_{\ge 0}\mathbb{P}[x_{d;t_1},x_{e;t_1},\widetilde {x_{d;t_1}},\widetilde {x_{e;t_1}},\widetilde{\widetilde {x_{d;t_1}}}; (x_{f;t_1}^{\pm 1})_{ f\ne d,e}],$$
or  $$u\in 
\mathbb{Z}_{\ge 0}\mathbb{P}[x_{d;t_1},x_{e;t_1},\widetilde {x_{d;t_1}},\widetilde {x_{e;t_1}},\widetilde{\widetilde {x_{e;t_1}}}; (x_{f;t_1}^{\pm 1})_{ f\ne d,e}].$$
Moreover, if ${f\ne d,e}$ then $x_{f;t'}=x_{f;t}$  is a cluster variable in $\mathbf{x}_t$. On the other hand, each of the variables
$x_{d;t_1},x_{e;t_1},\widetilde {x_{d;t_1}},\widetilde {x_{e;t_1}},$  $\widetilde{\widetilde {x_{d;t_1}}}$, and $\widetilde{\widetilde {x_{e;t_1}}}$ is obtained from the  cluster  $\mathbf{x}_t$ by a mutation 
sequence using only  the two directions $e$ and $d$, and therefore Theorem \ref{mainthm12052011} implies that these variables are Laurent polynomials in $\mathbf{x}_t$ with coefficients in $\mathbb{Z}_{\ge 0}\mathbb{P}$. It follows that, after substitution into $u$, we get an expansion for $u$ as a Laurent polynomial with non-negative coefficients in the initial cluster $\mathbf{x}_t$.
\end{proof}
\begin{remark}
  For $N>2$, one can prove Theorem~\ref{11192011thm}  directly from Theorem~\ref{thm1} without using induction. We prefer the proof above, since it illustrates the inductive nature of Theorem \ref{thm1}.
\end{remark}

\section{Proof of Theorem \ref{thm1}}\label{sect proof}
We use induction on the length $\ell$ of the sequence of mutations $\mu$. 

\noindent If $\ell =0$, then $u=x_{i;t}$ for some $i$, which is of the form $C_t$.

\noindent If $\ell =1$, then $\mu$ consists of a single mutation in direction $d$, and $u=\widetilde{x_{d;t}}$ is of the form $B_t$.

\noindent If $\ell =2 $, then $\mu$ is a sequence of two mutations $\xymatrix{t_0\ar@{-}[r]^{d'}&t'\ar@{-}[r]^d&t}$, with $d'\ne d$ and $e\ne d,d'$. Then
\[ u= (\textup{binomial in $\mathbf{x}_{t'}$})\,x_{d';t}^{-1} =(\textup{binomial in $\left(\mathbf{x}_{t}\setminus\{x_{d;t}\}\right)\cup\{\widetilde{x_{d;t}}\}$})\,x_{d',t}^{-1} 
\]
and this is of the form $B_t$, since $d'\ne d,e $ and the binomial has coefficients in $\mathbb{Z}_{\ge 0}\mathbb{P}$. 

 \noindent Now let $\ell\ge 3$.
 Then $\mu$ is a sequence of the form $\xymatrix{t_0\ar@{.}[r]&\cdot\ar@{-}[r]&t''\ar@{-}[r]^{d'}&t'\ar@{-}[r]^d&t}$, with $d'\ne d$. Consider the maximal rank two mutation subsequence in directions $d,d'$ at the end of $\mu$. This subsequence connects $t$ to a seed $t^{**}$, and we denote by $t^*$ the seed one step away from $t^{**}$ on this subsequence. Thus we have
 \[\xymatrix{\mu =&t_0\ar@{.}[r]&\cdot\ar@{-}[r]^{d''}&t^{**}\ar@{-}[r]^{d}&t^*\ar@{-}[r]^{d'}&\cdot\ar@{-}[r]^d&\cdot\ar@{-}[r]^{d'}&\cdot\ar@{.}[r]&\cdot\ar@{-}[r]^{d'}&\cdot\ar@{-}[r]^d&t}
\] or
 \[\xymatrix{\mu =&t_0\ar@{.}[r]&\cdot\ar@{-}[r]^{d''}&t^{**}\ar@{-}[r]^{d'}&t^*\ar@{-}[r]^d&\cdot\ar@{-}[r]^{d'}&\cdot\ar@{-}[r]^d&\cdot\ar@{.}[r]&\cdot\ar@{-}[r]^{d'}&\cdot\ar@{-}[r]^d&t}
\] 
with $d''\ne d,d'$. 

Consider the subsequence of mutations $\mu^*$ connecting $t_0$ to $t^*$. Since this sequence is shorter than the sequence $\mu$, we can conclude by induction that the statement holds in the seed $t^*$ with directions $d,d'$. Thus, {\ralf if $\mu $ is as in the first case,}
   \begin{equation}\label{eq41} u= A_{t^*}+B_{t^*}+C_{t^*}+D_{t^*},\end{equation}
   where 
   \[
\begin{array}{rclcrcl}
A_{t^*}\in\mathbb{Z}_{\ge 0}\mathbb{P} [ {\widetilde{\widetilde{x_{d';t^*}}}}, \widetilde{x_{d;t^*}};(x_{f;t^*}^{\pm 1})_{ f\ne d,d'}],
&\quad&
B_{t^*}\in\mathbb{Z}_{\ge 0}\mathbb{P} [  {x_{d';t^{*}}}, {\widetilde{x_{d;t^{*}}}};(x_{f;t^*}^{\pm 1})_{ f\ne d,d'}],
\\
\\
C_{t^*}\in\mathbb{Z}_{\ge 0}\mathbb{P} [ {x_{d';t^*}},{{x_{d;t^*}}};(x_{f;t^*}^{\pm 1})_{ f\ne d,d'}],
&\quad&
D_{t^*}\in\mathbb{Z}_{\ge 0}\mathbb{P} [ \widetilde{x_{d';t^*}},  {x_{d;t^*}};(x_{f;t^*}^{\pm 1})_{ f\ne d,d'}].
\end{array}
   \]
   
{\ralf   If the sequence $\mu$ is as in the second case, the roles of $d$ and $d'$ are interchanged. Without	loss of generality, we assume we are in the first case.}
   
 Consider the variables appearing in these expressions one by one. If $f\ne d,d'$ then  $x_{f;t^*}=x_{f;t}$ is in $\mathbf{x}_t$ and  may have a negative exponent in the desired expression for $u$. 
The variables
 ${\widetilde{\widetilde{x_{d';t^*}}}}, \widetilde{x_{d;t^*}},{x_{d';t^{*}}},{{x_{d;t^*}}}$ and $\widetilde{x_{d';t^*}}$ lie on a rank 2 mutation sequence from $t$ in the directions $d$ and $d'$, and  Corollary \ref{cor 2.11}  implies that all the cluster monomials involving such variables (up to the $x_f^\pm$) have expansions of the form $f_1+f_2$ with 
  \[f_1\in \mathbb{Z}_{\ge 0}\mathbb{P}[ \widetilde{x_{d;t}},x_{d';t}^{\pm1}; x_{f;t} :f\ne d,d'] \quad\textup{and} \quad  f_2 \in \mathbb{Z}_{\ge 0}\mathbb{P}[ {x_{d;t}},x_{d';t}^{\pm1}; x_{f;t} :f\ne d,d'] .\]
 Substituting these expansions into (\ref{eq41}) shows that 
 \begin{equation}\label{eq B'C'} u=B_t'+C_t',\end{equation}
  with  \[
\begin{array}{rclcrcl}
B_t'\in\mathbb{Z}_{\ge 0}\mathbb{P} [ {{\widetilde{x_{d;t}}}}, x_{d';t}^{\pm 1};(x_{f;t^*}^{\pm 1})_{ f\ne d,d'}],
&\quad&
C_t'\in\mathbb{Z}_{\ge 0}\mathbb{P} [  {x_{d;t}}, x_{d';t}^{\pm 1};(x_{f;t^*}^{\pm 1})_{ f\ne d,d'}],
\end{array}
   \]
  We now have to study the exponents of $x_{e;t}$ in $B_t'$ and $C_t'$. If these exponents are non-negative then $u$ is of the form $B_t+C_t$ and we are done. But if $x_{e;t}$ appears with negative exponents in $B_t'$, then we have to rewrite $B_t'$ in the form $A_t+B_t$, and if $x_{e;t}$ appears with negative exponents in $C_t'$, then we have to rewrite $C_t'$ in the form $C_t+D_t$, with $A_t, B_t, C_t, D_t$ as in the statement of Theorem~\ref{thm1}. We prove that this is always possible in Proposition~\ref{cor2.17}. To do so, we have to go back in the mutation sequence $\mu$ up to the last mutations in direction $e$.
  
   More precisely, 
consider the last maximal rank 2 subsequence $\nu$ of $\mu$ containing $e$. Let $e'$ be the other direction occurring in $\nu$. If
\[\xymatrix{\nu =&t_{w_B}\ar@{-}[r]^{e'}&t_{w_C}\ar@{-}[r]^{e}&t_{w_D}\ar@{-}[r]^{e'}&\ar@{-}[r]^e&\ar@{.}[r]&\ar@{-}[r]^{e'}&t_{w_Z}}
\] 
then let $t_{w_A}$ be the seed obtained from $t_{w_B}$ by mutating in direction $e$.
If
\[\xymatrix{\nu =&t_{w_B}\ar@{-}[r]^{e}&t_{w_C}\ar@{-}[r]^{e'}&t_{w_D}\ar@{-}[r]^e&\ar@{-}[r]^{e'}&\ar@{.}[r]&\ar@{-}[r]^{e'}&t_{w_Z}}\]  
then let $t_{w_A}$ be the seed obtained from $t_{w_B}$ by mutating in direction ${e'}$. We do not label any of the seeds between $t_{w_D}$ and $t_{w_E}$.

Without loss of generality, we may assume that $t_{w_Z}=t^{*}$ and that ${e'}=d$. Indeed, otherwise we can change the sequence $\mu$ by inserting two consecutive mutations in direction $e$ as follows
 \[\xymatrix{\mu' = \ t_0\ar@{.}[r]&\cdot\ar@{-}[r]^{d''}&t^{**}\ar@{-}[r]^{e}&t_{w_B}\ar@{-}[r]^{e}&t^{**}\ar@{-}[r]^{d}&t^*\ar@{-}[r]^{d'}&\cdot\ar@{.}[r]&\cdot\ar@{-}[r]^{d'}&\cdot\ar@{-}[r]^d&t}.
\] and letting $\nu$ be the mutation sequence $\xymatrix{t_{w_B}\ar@{-}[r]^{e}&t^{**}\ar@{-}[r]^{d}&t^*}$.

Thus, without loss of generality, we assume the sequence $\mu$ is as follows. 
 \begin{equation}\label{mu}
 \xymatrix{\mu = \ t_0\ar@{.}[r]&t_{w_B}\ar@{-} [r]&t_{w_C}\ar@{.}[r]&\cdot\ar@{-}[r]^d&\cdot\ar@{-}[r]^{e}&t^{**}\ar@{-}[r]^{d}&t^*\ar@{-}[r]^{d'}&\cdot\ar@{.}[r]&\cdot\ar@{-}[r]^{d'}&\cdot\ar@{-}[r]^d&t}.
\end{equation}

\begin{proposition}\label{prop2.13} The variable
$x_{e;t}$ may have negative exponents in one of the expressions $B'_t$ or $C'_t$ in equation~(\ref{eq B'C'}), but not in both. 
\end{proposition}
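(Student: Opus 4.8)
The plan is to trace the origin of the $x_{e;t}$-exponents in $B'_t$ and $C'_t$ back to the expansion formulas, and to show that the cyclicity type of the relevant rank~2 subquiver forces positivity on one side. Recall from equation~(\ref{eq B'C'}) that $B'_t$ and $C'_t$ arose by substituting the $\mathcal{L}_1$-type expansions (from Corollary~\ref{cor 2.11}) of the variables on the $d,d'$-sequence into~(\ref{eq41}). The variable $x_{e;t}$ is one of the frozen variables $x_{f;t}$ with $f=e\ne d,d'$, so its exponent in any single monomial is governed entirely by the exponent of $x_{e;t}$ in the individual rank~2 Laurent expansions, i.e.\ by the quantities $\xi_e,\omega_e$ attached to vertex $e$ in the full subquiver on $\{d,d',e\}$ and by the $M_e,M'_e$ correction terms in Theorems~\ref{mainthm12052011} and~\ref{cor 26}.

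First I would set up the normal form~(\ref{mu}) and identify which full subquiver on vertices $\{d,d',e\}$, read along $\mu$, controls the exponent. The key structural input is the dichotomy established in Definition~\ref{def 12} together with the Remarks following it: for the triangle on $\{d,d',e\}$, the sequence $(Q_0,\dots,Q_m)$ of full subquivers is \emph{either} of almost cyclic type \emph{or} of acyclic type (this exhausts all cases when the number of $d\to d'$ arrows is $\ge 2$; the $r=0,1$ cases are handled separately by the periodicity noted in Remark~(2) after Lemma~\ref{lem nonacyclic}). In the almost cyclic case, Theorem~\ref{cor 26} gives $M'_e=0$ and asserts that the exponent of $x_{e;t}$ is nonnegative, while in the acyclic case Theorem~\ref{mainthm12052011} controls the sign through $M_e=\xi_e A_{n-1}-\omega_e A_{n-2}$. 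So the heart of the argument is to show that these two possibilities distribute across $B'_t$ and $C'_t$ in a mutually exclusive way: whichever of $\widetilde{x_{d;t}}$ or $x_{d;t}$ carries the acyclic behavior at vertex $e$ forces the \emph{other} expansion to be positive in $x_{e;t}$.

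The main step is to pin down the orientation of the arrows at vertex $e$ as one mutates through $\nu$ and then through the final $d,d'$-segment, and to read off from Lemma~\ref{lem nonacyclic} (the three-case description of $\bar\xi(n),\bar\omega(n)$, which as Remark~\ref{Rem acyclic} notes forces all later quivers into the third case once acyclicity appears) that the passage from $x_{d;t}$ to $\widetilde{x_{d;t}}$ is exactly the single mutation in direction $d$ that toggles the relevant arrow configuration at $e$. Concretely, I expect the computation to reduce to comparing, for the two clusters differing by $\mu_d$, the signs of $\xi_e$ versus $\xi'_e=\omega_e$ (the ``before and after mutating in $d$'' quantities from the statement of Theorem~\ref{cor 26}); since mutation at $d$ reverses the arrow between $d$ and~$e$, precisely one of the two subquiver sequences can be of acyclic type at $e$. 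That gives positivity of $x_{e;t}$ in the corresponding one of $B'_t,C'_t$.

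The hard part will be bookkeeping the correction terms $M_e$ and $M'_e$ and showing that a \emph{negative} exponent genuinely occurs only on the acyclic side and is genuinely excluded on the almost cyclic side, rather than merely being nonnegative in each summand by accident. In particular I must verify that the minimum defining $M_e$ in Lemma~\ref{12312011} is attained at a pair $(S_1,S_2)$ that is compatible with the acyclic-type geometry, so that the acyclic-type monomial with the most negative $x_{e;t}$-power survives the subtraction of $M_e$ to leave a truly negative exponent — while the companion almost cyclic expansion has $M'_e=0$ and the nonnegativity clause of Theorem~\ref{cor 26} applies verbatim. The cleanest route is to argue by the orientation dichotomy: \emph{if} $x_{e;t}$ appeared with a negative exponent in both $B'_t$ and $C'_t$, then both subquiver sequences on $\{d,d',e\}$ would have to be of acyclic type at $e$, contradicting the fact that they differ by a single mutation $\mu_d$ which flips the arrow at~$e$ and hence cannot leave both in the acyclic regime. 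I therefore expect the final proposition to follow by combining this orientation-flip observation with Remark~\ref{Rem acyclic} and the explicit sign formulas for $M_e,M'_e$, with the only delicate point being the careful case analysis for $r=0,1$ where $c_n^{[r]}$ is periodic and the ``acyclic type'' condition must be checked directly from Definition~\ref{def 12}(7) rather than from the generic $r\ge 2$ dichotomy.
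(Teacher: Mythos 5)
There is a genuine gap here, and it starts with a misidentification of where the negative exponents of $x_{e;t}$ actually come from. After the inductive reduction to cluster monomials $x_{d;t_{w_E}}^p x_{e;t_{w_E}}^q$, the expansion is built by applying Theorem~\ref{mainthm12062011} \emph{twice}: first along the rank~2 segment in directions $d,e$ from $t_{w_E}$ to $t^*$, and then along the segment in directions $d,d'$ from $t^{**}$ to $t$. In the first application $e$ is a \emph{mutating} direction, and it is this step that produces the genuinely negative contribution $r_1 s_{n_1-3;1}-A_{n_1-2;1}$ to the exponent of $x_{e;t}$. In the second application $e$ is a frozen direction $f$, and there the exponent of $x_{e;t}$ is automatically non-negative by the last sentence of Theorem~\ref{mainthm12062011}, in \emph{both} the almost cyclic and the acyclic regimes. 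So the sign of the total exponent is not controlled by the $\xi_e,\omega_e,M_e,M'_e$ data at a frozen vertex, as you assume; it is controlled by whether the non-negative contribution from the second expansion is large enough to absorb the negative contribution from the first. Establishing that domination is the entire content of the paper's proof: it requires the quantitative inequalities such as $\omega_1 A_{n_1-1;1}>\xi_1 A_{n_1-2;1}$ (equation~(\ref{eq 44.1}), which ultimately comes from identifying certain combinations $\omega_1 c^{[r_1]}_{n_1}-\xi_1 c^{[r_1]}_{n_1-1}$ as arrow counts via Lemma~\ref{lem nonacyclic}), the compatible-pair estimate of Lemma~\ref{0413lem}, and Lemma~\ref{lem 514}. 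None of this can be replaced by a sign or orientation argument.

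Your proposed exclusivity mechanism also fails on its own terms. The expressions $B'_t$ and $C'_t$ are the two halves of a \emph{single} application of the mixed formula to the \emph{same} second segment; they do not correspond to two subquiver sequences differing by one mutation $\mu_d$, so there is no "arrow flip at $e$" that could put exactly one of them in the acyclic regime. Moreover, the claim that both relevant sequences cannot simultaneously be of acyclic type is false: this is precisely case (d) of the paper's case analysis, where both the $d,e$-segment and the $d,d'$-segment are of acyclic type. The paper does not derive a contradiction there; it shows (by a separate quiver-theoretic argument using Lemma~\ref{rem 12} and the third case of Lemma~\ref{lem nonacyclic}) that case (d) forces four consecutive acyclic seeds and then reduces to the estimate of case~(c). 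To repair your argument you would need to (i) correctly locate the negative exponent in the first rank~2 expansion, and (ii) carry out the explicit comparison of magnitudes in each of the four cyclicity cases, which is where all the work lies.
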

\begin{proof}This is proved in section~\ref{proof prop2.13}.
\end{proof}

Let $A_{i;1}=A_i$ (respectively $\tau_{i;1}=\tau_i, s_{i;1}=s_1$ ) be the sequence of integers defined in section~\ref{sect 3.3} with respect to $r$ the number of arrows between $e$ and $d$ at the seed $t_{w_E}$, where $E=A,B,C$, or $D$. \begin{proposition}\label{prop2.14}
Suppose that at least one monomial of $B'_t$ has a negative exponent  in $x_{e;t}$. 
Then each cluster monomial of the form $x_{e;t_{w_E}}^p x_{d;t_{w_E}}^q$, where $E=A,B,C$, or $D$, has the following form 
\begin{equation}\label{eq2.14}
\aligned
&\sum_{v\geq 0} (\text{Laurent polynomial in cluster variables {\ralf of $\mathbf{x}_{t}\cup\mathbf{x}_{t'} \setminus\{x_{e;t}\}$}})\,x_{e;t}^{v}\\
+&\sum_{\theta>0}x_{e;t'}^{-\theta}\sum_{\varsigma\geq 0} \sum_ \mathfrak{r} \lambda_\mathfrak{r} \mathfrak{r}\\
&\times \sum_{\begin{array}{c} \scriptstyle \tau_{0;\nuisone},\tau_{1;\nuisone},\cdots,\tau_{n_1-2;\nuisone}\\\scriptstyle  s_{n_\nuisone-1;\nuisone}=A_{n_\nuisone-1;\nuisone}-\varsigma\end{array}} 
\sum_{j=0}^{\sum_{w=1}^{n_2-3} \tau_{w;2}} d_j {{ \left\lfloor (A_{n_\nuisone-1;\nuisone}-\varsigma) \frac{A_{n_\nuisone-1;\nuisone}}{A_{n_\nuisone;\nuisone}}\right\rfloor -s_{n_\nuisone-2;\nuisone}}
 \choose j}  \\
 &\times \left( \prod_{w=0}^{n_\nuisone-2}\gchoose{A_{w+1;\nuisone} - r_\nuisone s_{w;\nuisone}}{\tau_{w;\nuisone}}   \right)\left(\frac{\prod_i x_{i;t'}^{ [b_{i,e}^{t'}]_+ }  }{\prod_i x_{i;t'}^{ [-b_{i,e}^{t'}]_+ } }\right)^{\emph{sgn}(2b_{d,e}^{t'}+1) \left(\left\lfloor (A_{n_\nuisone-1;\nuisone}-\varsigma) \frac{A_{n_\nuisone-1;\nuisone}}{A_{n_\nuisone;\nuisone}}\right\rfloor -s_{n_\nuisone-2;\nuisone}\right)},
\endaligned \end{equation}
where $\mathfrak{r}\in \mathbb{ZP}_{\geq 0}[\{x_{d;t'}\}\cup (\mathbf{x}^{\pm1}_{t'}\setminus\{x_{d;t'}^{\pm1},x_{e;t'}^{\pm1}\})]$, $\lambda_{ \mathfrak{r}}\in\{0,1\}$  and   $d_j$ are nonnegative integers depending  on the summation indices,
{\ralf and the integers $\tau_{i,\ell}$ satisfy condition (\ref{cond502b0303}) with $n=n_1+1$ if $\ell=1$; and $n=n_2$ if $\ell=2$, for some integers $n_1,n_2$. The second subindex $\ell=1,2$ in $s_{i,;\ell}, A_{i;\ell}, \tau_{i;\ell}$ refers to the fact that these integers are defined in terms of $n_\ell$.}
 \end{proposition}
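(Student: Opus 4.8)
The plan is to compute the Laurent expansion of each cluster monomial $x_{e;t_{w_E}}^p x_{d;t_{w_E}}^q$ by transporting it down the mutation sequence $\mu$ in two stages, applying a rank 2 formula at each stage, and then reorganizing the output according to the exponent of $x_{e;t}$. Since the seeds $t_{w_A},t_{w_B},t_{w_C},t_{w_D}$ all lie on the rank 2 subsequence $\nu$ in directions $e$ and $e'=d$, I would first apply the second expansion formula of Theorem~\ref{cor 26} (in the form of the mixed formula Theorem~\ref{mainthm12062011}) to $\nu$, expressing $x_{e;t_{w_E}}^p x_{d;t_{w_E}}^q$ as a sum of monomials in the mutated variable $\widetilde{x_{e;t'}}$, in $x_{d;t'}$, and in the variables $x_{f;t'}$ with $f\ne d,e$, weighted by modified binomial coefficients governed by a length parameter $n_1$. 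This produces the product $\prod_{w=0}^{n_1-2}\gchoose{A_{w+1;1}-r_1 s_{w;1}}{\tau_{w;1}}$ and the summation constraint $s_{n_1-1;1}=A_{n_1-1;1}-\varsigma$ appearing in \eqref{eq2.14}, where $A_{\bullet;1},s_{\bullet;1},\tau_{\bullet;1}$ are the sequences of Section~\ref{sect 3.3} attached to $r_1$, the number of arrows between $e$ and $d$ at $t_{w_E}$.

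The second stage is to eliminate $\widetilde{x_{e;t'}}$ in favor of $x_{e;t'}$ via the exchange relation \eqref{exchange relation} at $t'$, namely $\widetilde{x_{e;t'}}=\bigl(\prod_i x_{i;t'}^{[b_{i,e}^{t'}]_+}+\prod_i x_{i;t'}^{[-b_{i,e}^{t'}]_+}\bigr)\big/x_{e;t'}$ up to a factor in $\mathbb{P}$. Substituting this for each positive power of $\widetilde{x_{e;t'}}$ is exactly what creates the factors $x_{e;t'}^{-\theta}$ together with the binomial $\bigl(\prod_i x_{i;t'}^{[b_{i,e}^{t'}]_+}+\prod_i x_{i;t'}^{[-b_{i,e}^{t'}]_+}\bigr)^{\theta}$; the sign $\textnormal{sgn}(2b_{d,e}^{t'}+1)$ records the orientation of the arrow between $d$ and $e$ at $t'$, i.e.\ which of the two monomials of this binomial carries the positive power, which is precisely the product term in the last line of \eqref{eq2.14}. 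Because $e\ne d$ we have $x_{e;t}=x_{e;t'}$, so I would then split the whole expansion according to the total exponent of $x_{e;t}$: the part with non-negative exponent collects into the first line $\sum_{v\ge 0}(\cdots)x_{e;t}^{v}$, while the part with negative exponent is exactly the summand indexed by $\theta>0$.

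For the negative part I would invoke the divisibility statement of Theorem~\ref{thm01312012}: after the factor $x_{e;t'}^{-\theta}$ is pulled out, the inner sum over the $\tau_{\bullet;1}$ subject to $s_{n_1-1;1}=A_{n_1-1;1}-\varsigma$ is divisible by the relevant power of $(1+x^{r_1})$, and the quotient has non-negative coefficients. Expanding this quotient against the contribution of the second rank 2 sequence, of length $n_2$, which carries the remaining variables further down towards $t$, yields the modified binomial $\gchoose{\lfloor(A_{n_1-1;1}-\varsigma)A_{n_1-1;1}/A_{n_1;1}\rfloor-s_{n_1-2;1}}{j}$ with non-negative coefficients $d_j$ and range $0\le j\le\sum_{w=1}^{n_2-3}\tau_{w;2}$; the floor is the integer part forced by the bound $a\ge A_{n}/r$ of Theorem~\ref{thm01312012} together with the recursion $A_i=r_1A_{i-1}-A_{i-2}$ of Lemma~\ref{negone}. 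Everything independent of $x_{e;t'}$ is absorbed into $\sum_{\mathfrak{r}}\lambda_{\mathfrak{r}}\mathfrak{r}$ with $\lambda_{\mathfrak{r}}\in\{0,1\}$ and $\mathfrak{r}\in\mathbb{ZP}_{\ge 0}[\{x_{d;t'}\}\cup(\mathbf{x}_{t'}^{\pm1}\setminus\{x_{d;t'}^{\pm1},x_{e;t'}^{\pm1}\})]$.

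The hypothesis that some monomial of $B'_t$ carries a negative exponent in $x_{e;t}$ is what guarantees that the negative part is genuinely present, i.e.\ that $A_{n_1-1;1}-r_1 s_{n_1-2;1}>0$, so that Theorem~\ref{thm01312012} applies with a strictly positive divisibility exponent. The main obstacle I expect is purely bookkeeping: composing the two rank 2 expansions (the length-$n_1$ sequence $\nu$ and the length-$n_2$ sequence from $t^*$ down towards $t$) so that the combined binomial coefficients and the floor function line up exactly with \eqref{eq2.14}, and checking that the exponent of $x_{e;t'}$ produced by the exchange relation is uniformly the $\theta$ indexing the outer sum. The non-negativity of the final coefficients $d_j$ is then inherited directly from the divisibility theorem, so no further positivity argument is needed.
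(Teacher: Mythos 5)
Your two-stage skeleton (apply a rank 2 expansion along $\nu$, then along the $d,d'$-sequence down to $t$) matches the paper's strategy, but two of your key steps do not survive contact with the actual formulas. First, the sequence $\nu$ ends with a mutation in direction $d$ (the edge $t^{**}\to t^*$), so Theorem~\ref{cor 26} applied along $\nu$ produces monomials in $\widetilde{x_{d;t^*}}$, $x_{e;t^*}$ and $x_{d';t^*}$ --- not in $\widetilde{x_{e;t'}}$, which lives at the wrong seed and the wrong direction. No positive power of $\widetilde{x_{e;t'}}=\widetilde{\widetilde{x_{e;t}}}$ ever appears in the intermediate expansions, so your second stage (``eliminate $\widetilde{x_{e;t'}}$ via the exchange relation to create $x_{e;t'}^{-\theta}$ times a binomial'') has nothing to act on. In the paper the factor $x_{e;t'}^{-\theta}$ is simply the negative power of $x_{e;t}=x_{e;t'}$ that the composed expansion produces (its exponent is computed explicitly in \ref{sect exponents}, equation (\ref{theta})), and the ratio $\bigl(\prod_i x_{i;t'}^{[b^{t'}_{i,e}]_+}/\prod_i x_{i;t'}^{[-b^{t'}_{i,e}]_+}\bigr)$ raised to the power $\pm\bigl(\lfloor\cdots\rfloor-s_{n_1-2;1}\bigr)$ is the Laurent monomial in $\widetilde{x_{d;t}},x_{d';t},x_{g;t}$ obtained by collecting every occurrence of $s_{n_1-2;1}$ in the exponents; identifying it with that in/out-arrow monomial at $e$ in the seed $t'$ requires the quiver computations (\ref{eq 4.8.1}) and (\ref{gamma2}). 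The exchange relation enters only later, in Proposition~\ref{cor2.17}, to convert divisibility by $(1+X)^{\theta}$ into divisibility by $\widetilde{\widetilde{x_{e;t}}}^{\,\theta}$.

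Second, and more seriously, the nonnegativity of the $d_j$ is not ``inherited directly from the divisibility theorem'': Theorem~\ref{thm01312012} is not used in this proposition at all (it is the engine of Proposition~\ref{cor2.17}). The $d_j$ arise from rewriting $\prod_{w=1}^{n_2-3}\binom{A_{w+1;2}-r_2 s_{w;2}}{\tau_{w;2}}$ as a nonnegative integer combination of the binomials $\binom{\lfloor(A_{n_1-1;1}-\varsigma)A_{n_1-1;1}/A_{n_1;1}\rfloor-s_{n_1-2;1}}{j}$, which is Lemma~\ref{0303lem945}; its proof requires showing that $A_{w+1;2}-r_2 s_{w;2}=a\bigl(\lfloor\cdots\rfloor-s_{n_1-2;1}\bigr)+C(w)$ with $a>0$ and $C(w)\ge 0$, and the nonnegativity of $C(w)$ rests on the inequalities (\ref{eq page 33 bis}) and (\ref{eq page 33}), whose case-by-case verification occupies the bulk of section~\ref{proof prop2.14}. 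This is the hard core of the proposition and is entirely absent from your outline. Finally, the hypothesis on $B'_t$ is not there to make the negative part ``genuinely present''; it places the mutation sequence in case (a) (or (c)), which is what validates the use of the first case of Lemma~\ref{lem nonacyclic} and the key inequality $\omega_1A_{n_1-1;1}>\xi_1A_{n_1-2;1}$ on which all of the above estimates depend.
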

\begin{proof} This is proved in section \ref{proof prop2.14}.\end{proof}
\begin{remark}
The condition that at least one monomial of $B_t'$ has a negative exponent in $x_{e;t}$ does not depend on the variable $u$ but rather on the orientation of the quivers in the mutation sequence $\nu$.
\end{remark}

\begin{proposition}\label{prop2.16}
Suppose that at least one monomial of $C'_t$ has a negative exponent  in $x_{e;t}$. 
Then each cluster monomial of the form $x_{e;t_{w_E}}^p x_{f;t_{w_E}}^q$, where $E=A,B,C$, or $D$, has the following form 
\begin{equation}\label{20121225double star}
\aligned
&\sum_{v\geq 0} (\text{Laurent polynomial in cluster variables {\ralf of $\mathbf{x}_{t}\cup\mathbf{x}_{t'} \setminus\{x_{e;t}\}$}})\,x_{e;t}^{v}\\
+&\sum_{\theta>0}x_{e;t'}^{-\theta}\sum_{\varsigma\geq 0}  \sum_ \mathfrak{r} \lambda_\mathfrak{r} \mathfrak{r}\\
&\times \sum_{\begin{array}{c} \scriptstyle \tau_{0;\nuisone},\tau_{1;\nuisone},\cdots,\tau_{n_1-2;\nuisone}\\\scriptstyle  s_{n_\nuisone-1;\nuisone}=A_{n_\nuisone-1;\nuisone}-\varsigma\end{array}} \sum_{j=0}^{A_{n_\nuisone-2;\nuisone} - r_\nuisone \varsigma -\theta} d_j {{ \left\lfloor (A_{n_\nuisone-1;\nuisone}-\varsigma) \frac{A_{n_\nuisone-1;\nuisone}}{A_{n_\nuisone;\nuisone}}\right\rfloor -s_{n_\nuisone-2;\nuisone}}
 \choose j}  \\
 &\times \left( \prod_{w=0}^{n_\nuisone-2}\gchoose{A_{w+1;\nuisone} - r_\nuisone s_{w;\nuisone}}{\tau_{w;\nuisone}}   \right)\left(\frac{\prod_i x_{i;t}^{ [b_{i,e}^{t}]_+ }  }{\prod_i x_{i;t}^{ [-b_{i,e}^{t}]_+ } }\right)^{\emph{sgn}(2b_{d,e}^{t}+1) \left(\left\lfloor (A_{n_\nuisone-1;\nuisone}-\varsigma) \frac{A_{n_\nuisone-1;\nuisone}}{A_{n_\nuisone;\nuisone}}\right\rfloor -s_{n_\nuisone-2;\nuisone}\right)},
\endaligned \end{equation}
where $\mathfrak{r}\in \mathbb{ZP}_{\geq 0}[\{x_{d;t}\}\cup (\mathbf{x}^{\pm1}_{t}\setminus\{x_{d;t}^{\pm1},x_{f;t}^{\pm1}\})]$, $\lambda_{ \mathfrak{r}}\in\{0,1\}$ and   $d_j$ are nonnegative integers depending on the summation indices, {\ralf and the integers $\tau_{i,\ell}$ satisfy condition (\ref{cond502b0303}) with $n=n_1+1$ if $\ell=1$, for some integer $n_1$.}  
\end{proposition}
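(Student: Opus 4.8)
The plan is to establish Proposition~\ref{prop2.16} as the mirror image of Proposition~\ref{prop2.14}, exploiting the fact that the two statements arise from the two halves of the decomposition $u = B_t' + C_t'$ in equation~(\ref{eq B'C'}). The starting point is Proposition~\ref{prop2.13}, which guarantees that $x_{e;t}$ can have negative exponents in at most one of $B_t'$ and $C_t'$; under the standing hypothesis that the offending monomials sit in $C_t'$, I would expand each cluster monomial $x_{e;t_{w_E}}^p x_{f;t_{w_E}}^q$ using the second expansion formula, Theorem~\ref{cor 26}, applied along the rank~2 subsequence $\nu$ in directions $e$ and $d$ ending at $t$. The key structural observation is that in the present case the role played by the primed seed $t'$ in Proposition~\ref{prop2.14} is now played by the unprimed seed $t$ itself; accordingly, every occurrence of $x_{\bullet;t'}$ and $b^{t'}_{\bullet,e}$ in formula~(\ref{eq2.14}) is to be replaced by $x_{\bullet;t}$ and $b^{t}_{\bullet,e}$ in formula~(\ref{20121225double star}). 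This accounts for the two cosmetic differences between the statements: the exchange factor is built from the $t$-column rather than the $t'$-column, and the inner summation index $j$ now runs up to $A_{n_1-2;1}-r_1\varsigma-\theta$ rather than up to $\sum_{w} \tau_{w;2}$.

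The execution proceeds in the same order as for Proposition~\ref{prop2.14}. First I would isolate, within $x_{e;t_{w_E}}^p x_{f;t_{w_E}}^q$, the part with nonnegative powers of $x_{e;t}$, collecting it into the first summand of~(\ref{20121225double star}); this is purely bookkeeping and contributes the ``$\sum_{v\ge 0}$'' line verbatim. Second, for the terms carrying genuinely negative $x_{e;t}$-powers, I would apply Theorem~\ref{cor 26} to write the monomial as a sum over $\tau_{0;1},\dots,\tau_{n_1-2;1}$ of products of modified binomial coefficients $\gchoose{A_{w+1;1}-r_1 s_{w;1}}{\tau_{w;1}}$ times a power of the exchange binomial. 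The exponent of that binomial must be matched to $\operatorname{sgn}(2b^{t}_{d,e}+1)\bigl(\lfloor (A_{n_1-1;1}-\varsigma)\,A_{n_1-1;1}/A_{n_1;1}\rfloor - s_{n_1-2;1}\bigr)$, which is exactly the exponent obtained from the floor-function estimate in Theorem~\ref{thm01312012} governing divisibility by $(1+x_1^r)^{ra-A_n}$. Third, I would feed this exchange-binomial power through the substitution coming from the exchange relation (\ref{exchange relation}) at the last mutation in direction $e$, producing the factor $\bigl(\prod_i x_{i;t}^{[b^t_{i,e}]_+}/\prod_i x_{i;t}^{[-b^t_{i,e}]_+}\bigr)^{(\cdots)}$ and the auxiliary sum $\sum_{\mathfrak r}\lambda_{\mathfrak r}\mathfrak r$ with $\mathfrak r\in\mathbb{ZP}_{\ge 0}[\{x_{d;t}\}\cup(\mathbf{x}^{\pm1}_{t}\setminus\{x_{d;t}^{\pm1},x_{f;t}^{\pm1}\})]$.

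The main obstacle, as in Proposition~\ref{prop2.14}, will be controlling the inner summation so that the coefficients $d_j$ come out nonnegative and the second binomial factor $\gchoose{\lfloor(A_{n_1-1;1}-\varsigma)A_{n_1-1;1}/A_{n_1;1}\rfloor - s_{n_1-2;1}}{j}$ appears with the correct upper limit. The upper limit $A_{n_1-2;1}-r_1\varsigma-\theta$ here, rather than $\sum_w\tau_{w;2}$, reflects that the expansion is carried out entirely on the $t$-side and therefore does not require the second, auxiliary rank~2 sequence with index $n_2$; this is why the hypothesis mentions only condition~(\ref{cond502b0303}) with $n=n_1+1$ and a single integer $n_1$. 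I would verify the nonnegativity of the $d_j$ precisely by invoking Theorem~\ref{thm01312012}: the relevant rank~2 sum is divisible by a power of $1+x_1^{r}$ with nonnegative quotient, and reorganizing that quotient as a polynomial in the shifted variable supplies the coefficients $d_j\ge 0$. Since every other ingredient is obtained from the proof of Proposition~\ref{prop2.14} by the uniform substitution $t'\rightsquigarrow t$, I expect the verification to reduce to checking that this substitution is compatible with the exchange relation at the final $e$-mutation, which is the delicate point deferred to section~\ref{proof prop2.14}.
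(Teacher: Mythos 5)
Your proposal matches the paper's treatment: the paper's entire proof of Proposition~\ref{prop2.16} is the single remark that it is ``similar to the proof of Proposition~\ref{prop2.14}'', and your plan of transporting that argument via the substitution $t'\rightsquigarrow t$ in the exchange factor, together with the enlarged upper summation limit $A_{n_1-2;1}-r_1\varsigma-\theta$ (which the paper itself justifies in the proof of Proposition~\ref{cor2.17} using equation~(\ref{theta})), is exactly the intended adaptation. No discrepancy to report.
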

\begin{proof}
 The proof of Proposition \ref{prop2.16} is similar to the proof of Proposition \ref{prop2.14}.
\end{proof}

\begin{proposition}\label{cor2.17} With the notation in equation (\ref{eq B'C'}) we have
\begin{enumerate}
\item
$B'_t$ is of the form $A_t+B_t$;
\item
$C'_t$ is of the form $C_t +D_t$; 
\end{enumerate}
with $A_t, B_t, C_t, D_t$ as in the statement of Theorem~\ref{thm1}.
\end{proposition}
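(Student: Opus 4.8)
The plan is to prove Proposition~\ref{cor2.17} by showing that the two expansions $B'_t$ and $C'_t$ from equation~(\ref{eq B'C'}) can each be split along the sign of the exponent of $x_{e;t}$, and that the part with negative exponents can be rewritten using the variable $\widetilde{x_{e;t}}$ (for $B'_t$, yielding the $A_t$ piece) or $\widetilde{\widetilde{x_{e;t}}}$ (for $C'_t$, yielding the $D_t$ piece) with \emph{nonnegative} coefficients. By Proposition~\ref{prop2.13}, $x_{e;t}$ has negative exponents in at most one of $B'_t, C'_t$, so I only ever need to perform one such rewriting; if neither has negative exponents we are already done, with $u=B_t+C_t$. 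Thus the real content is to handle the single expression in which negative powers of $x_{e;t}$ occur, and this is where Propositions~\ref{prop2.14} and~\ref{prop2.16} enter.

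First I would treat case~(1), assuming (by Proposition~\ref{prop2.13}) that the negative exponents of $x_{e;t}$ occur in $B'_t$. The idea is that $B'_t$ lives in $\mathbb{Z}_{\ge 0}\mathbb{P}[\widetilde{x_{d;t}}, x_{d';t}^{\pm 1}; (x_{f;t}^{\pm 1})_{f\ne d,d'}]$, and I want to re-expand the monomials carrying negative powers of $x_{e;t}$. The key structural input is Proposition~\ref{prop2.14}: every relevant cluster monomial $x_{e;t_{w_E}}^p x_{d;t_{w_E}}^q$ decomposes as a manifestly nonnegative part supported on nonnegative powers $x_{e;t}^v$, plus a second sum indexed by $\theta>0$ that is divisible by $x_{e;t'}^{-\theta}$. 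The exchange relation~(\ref{exchange relation}) relating $x_{e;t}$ and $\widetilde{x_{e;t}}$ lets me convert the power $x_{e;t'}^{-\theta}$ (equivalently the negative power of $x_{e;t}$) into a nonnegative power of $\widetilde{x_{e;t}}$, at the cost of multiplying by a binomial factor in the remaining variables. The crucial point is that after this substitution the coefficients remain in $\mathbb{Z}_{\ge 0}\mathbb{P}$: this is exactly what the divisibility-and-nonnegativity statement of Theorem~\ref{thm01312012} guarantees, since the inner binomial sums in~(\ref{eq2.14}) are precisely of the form handled there, so the quotient by the appropriate power of $(1+x^r)$ has nonnegative coefficients. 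Collecting terms, the nonnegative-power part becomes $B_t\in\mathbb{Z}_{\ge 0}\mathbb{P}[\widetilde{x_{d;t}}, x_{e;t}; (x_{f;t}^{\pm 1})_{f\ne d,e}]$ and the rewritten negative-power part becomes $A_t\in\mathbb{Z}_{\ge 0}\mathbb{P}[\widetilde{x_{d;t}}, \widetilde{\widetilde{x_{e;t}}}; (x_{f;t}^{\pm 1})_{f\ne d,e}]$.

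Case~(2) is entirely parallel, using Proposition~\ref{prop2.16} in place of Proposition~\ref{prop2.14}. Here the monomials of $C'_t$ carrying negative powers of $x_{e;t}$ are re-expanded, and the decomposition~(\ref{20121225double star}) supplies the nonnegative-power part $C_t\in\mathbb{Z}_{\ge 0}\mathbb{P}[x_{d;t}, x_{e;t}; (x_{f;t}^{\pm 1})_{f\ne d,e}]$ together with a $\theta>0$ piece that, after using the exchange relation to trade $x_{e;t'}^{-\theta}$ for a nonnegative power of $\widetilde{x_{e;t}}$, yields $D_t\in\mathbb{Z}_{\ge 0}\mathbb{P}[x_{d;t}, \widetilde{x_{e;t}}; (x_{f;t}^{\pm 1})_{f\ne d,e}]$. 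The nonnegativity of the resulting coefficients again reduces to Theorem~\ref{thm01312012}, applied to the binomial sums appearing in~(\ref{20121225double star}). I would also verify that the variables $x_{f;t}$ with $f\ne d,e$ are genuinely the only ones that may carry negative exponents, which is immediate since $x_{f;t}=x_{f;t^*}=x_{f;t'}$ for such $f$ and all other variables entering $A_t,B_t,C_t,D_t$ appear only with nonnegative powers by construction.

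The main obstacle, and the heart of the argument, is the nonnegativity of the coefficients after the exchange-relation substitution converts negative powers of $x_{e;t}$ into positive powers of $\widetilde{x_{e;t}}$ or $\widetilde{\widetilde{x_{e;t}}}$. A priori this substitution introduces a division by a factor of the form $(1+x^r)^{\text{something}}$, and there is no reason a generic Laurent polynomial would remain positive after such a division; what saves us is the very specific binomial structure forced by Propositions~\ref{prop2.14} and~\ref{prop2.16}, which matches the hypotheses of Theorem~\ref{thm01312012} exactly. I would therefore spend the bulk of the proof carefully matching the summation indices $\tau_{i;\ell}, s_{i;\ell}, A_{i;\ell}$ in~(\ref{eq2.14}) and~(\ref{20121225double star}) to the normalization $a\ge A_n/r$ required in Theorem~\ref{thm01312012}, confirming that the exponent $ra-A_n$ of the dividing factor is exactly the power of $x_{e;t'}$ being cancelled. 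The uniqueness of $A_t,B_t,C_t,D_t$ up to intersection of the relevant polynomial rings then follows formally, since the five variables $x_{d;t},x_{e;t},\widetilde{x_{d;t}},\widetilde{x_{e;t}},\widetilde{\widetilde{x_{e;t}}}$ are related by the two exchange relations, which pin down the ambiguity in the splitting.
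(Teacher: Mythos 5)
Your outline follows the paper's own route: split $B'_t$ and $C'_t$ along the sign of the exponent of $x_{e;t}$, invoke Proposition~\ref{prop2.13} so that only one of the two needs repair, feed the offending expression through Proposition~\ref{prop2.14} (resp.\ Proposition~\ref{prop2.16}), and convert the $x_{e;t'}^{-\theta}$ factor into a nonnegative power of $\widetilde{\widetilde{x_{e;t}}}$ (resp.\ $\widetilde{x_{e;t}}$) via the exchange relation, with positivity ultimately resting on Theorem~\ref{thm01312012}. (Note the slip in your opening paragraph: the negative part of $B'_t$ becomes the $\widetilde{\widetilde{x_{e;t}}}$--piece $A_t$, and that of $C'_t$ becomes the $\widetilde{x_{e;t}}$--piece $D_t$; your detailed paragraphs state this correctly.)

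The genuine gap is in the step where you assert that the inner sums of (\ref{eq2.14}) are ``precisely of the form handled'' by Theorem~\ref{thm01312012} and that the exponent $ra-A_n$ of the dividing factor ``is exactly the power of $x_{e;t'}$ being cancelled.'' Neither is true. Writing the second sum of (\ref{eq2.14}) as $\sum_{b,m}p(m)\,q(m)\,X^{b-m}$ with $q(m)=\prod_{w}\binom{A_{w+1;1}-r_1 s_{w;1}}{\tau_{w;1}}$, $m=s_{n_1-2;1}$, and $X$ the exchange-relation ratio, Theorem~\ref{thm01312012} controls only $\sum_m q(m)X^{b-m}$: it yields divisibility by $(1+X)^{g}$ with $g=r_1(A_{n_1-1;1}-\varsigma)-A_{n_1;1}$, which exceeds $\theta$ in general. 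The actual summand carries the extra factor $p(m)=\sum_j d_j\binom{b-m}{j}$ of binomial degree $h=A_{n_1-2;1}-r_1\varsigma-\theta$, and one needs Lemma~\ref{0303lem944} to conclude that multiplying the coefficients by $p(m)$ still leaves divisibility by $(1+X)^{g-h}$ with nonnegative quotient; the identity $g-h=\theta$ is then exactly Lemma~\ref{negone}(a). Without this transfer step the divisibility by $(1+X)^{\theta}$ --- hence by $\widetilde{\widetilde{x_{e;t}}}^{\,\theta}$ after dividing by $x_{e;t'}^{\theta}$ --- does not follow, and this is precisely why Proposition~\ref{prop2.14} was formulated with the $d_j\binom{\,\cdot\,}{j}$ structure in the first place.
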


\begin{proof} This is proved in section~\ref{sect 5.3}.
\end{proof}

This completes the proof of Theorem \ref{thm1} modulo Propositions \ref{prop2.13}, \ref{prop2.14} and \ref{cor2.17}.

\subsection{Proof of Proposition \ref{prop2.13}}\label{proof prop2.13}  
 By induction, we can assume that $u$ can be written as a Laurent polynomial in the clusters $t_{w_A},t_{w_B},t_{w_C},$ and $t_{w_D}$ in such a way that the variables $x_{d;-}$ and $x_{e;-}$ appear only with non-negative exponents.
Thus, in order to prove Proposition~\ref{prop2.13}, we must compute the $\mathbf{x}_t$-expansions of cluster monomials  $x_{d;t_{w_E}}^p x_{e;t_{w_E}}^q$ with $p,q\ge 0.$ 

Recall from (\ref{mu})  that  $t^*=t_{w_Z}$ and our mutation sequence is of the following form. 

 \begin{equation}\label{muagain}
 \xymatrix{t_{w_E}\ar@{.}[r]&\cdot\ar@{-}[r]^d&\cdot\ar@{-}[r]^{e}&t^{**}\ar@{-}[r]^{d}&t^*\ar@{-}[r]^{d'}&\cdot\ar@{.}[r]&\cdot\ar@{-}[r]^{d'}&t'\ar@{-}[r]^d&t}.
\end{equation}

Let $Q_0, Q_1$ and $Q_2$, respectively, be the full subquivers on vertices $d,d',e$ of the quiver at the seeds $t_{w_E}, t^{**}$ and $t'$, respectively. Note that the notation here is not the same as in Lemma \ref{lem nonacyclic}. Denote by $n_1$ the number of seeds between $t_{w_E}$ and $t^*$ inclusively, and  by $n_2$ the number of seeds between $t^{**}$ and $t$ inclusively. Note that $n_2$ is even.
We define $s_{i,;\ell}, A_{i;\ell}$ and $\tau_{i;\ell}$ with $\ell=1,2$ 
 as in section \ref{sect 3.3}, but replacing $n$ with $n_\ell-2$.

We shall often use Lemma \ref{lem nonacyclic} to compute the relations between the number of arrows in the quivers $Q_0,Q_1 $ and $Q_2$. The integer $n$ in the statement of  Lemma \ref{lem nonacyclic}  denotes the number of mutations between two quivers; thus we have
\[n=
\left\{\begin{array}
 {ll}
 n_1-2 &\textup{between $Q_0$ and $Q_1$}\\
 n_2-2 &\textup{between $Q_1$ and $Q_2$}
\end{array}\right.\]
Let $$Q_1=\xymatrix{d\ar[dr]_{r_1}&&d'\ar[ll]_{\xi_1}\\&e\ar[ru]_{\omega_1}} \qquad \textup{and} \qquad Q_2=\xymatrix{d\ar[dr]_{\xi_2}&&d'\ar[ll]_{r_2}\\&e\ar[ru]_{\omega_2}}$$ where $r_1$ (respectively $\omega_1$ and $\xi_1$) is the number of arrows from $d$ to $e$ (respectively from $e$ to $d'$, and from  $d'$ to $d$) in $Q_1$, and  $r_2$ (respectively $\omega_2$ and $\xi_2$) is the number of arrows from $d'$ to $d$ (respectively from $e$ to $d'$, and from  $d$ to $e$) in $Q_2$. Without loss of generality, we assume that $r_1\ge0$ and $r_2\ge 0.$ Thus we also have $\xi_1\ge 0$, since $r_2=\xi_1$. Note however, that $\xi_2,\omega_1$ and $\omega_2$ may be negative.
For the rest of this proof, we set $x_{f;t_{w_E}}=1$ for all $f\ne d,d',e$. 

Recall that the  sequences of quivers of {\em almost cyclic type} and of {\em acyclic type} were defined in Definition~\ref{def 12}.
 We need to consider four cases. 
\begin{itemize}
\item [(a)] Both the sequence of quivers from $t_{w_E} $ to $t_{w_Z}=t^*$ and the sequence of quivers from $\mu_{d}(t_{w_Z})=t^{**}$ to $t$ are of almost cyclic type.
\item [(b)]  The sequence of quivers from $t_{w_E}$ to $t_{w_Z}=t^*$ is of almost cyclic type, and the sequence of quivers from $\mu_{d}(t_{w_Z})=t^{**}$ to $t $ is of acyclic type. 

\item [(c)] The sequence of quivers from $t_{w_E}$ to $ t_{w_Z}=t^*$ is of acyclic type, and the sequence of quivers from $\mu_{d}(t_{w_Z})=t^{**}$ to $t$ is of almost cyclic type.
\item [(d)] Both the sequence of quivers from $t_{w_E} $ to $t_{w_Z}=t^*$ and the sequence of quivers from $\mu_{d}(t_{w_Z})=t^{**}$ to $t$ are of acyclic type.
\end{itemize}

Let us suppose first that we are in the case (a) or (b), that is,  the sequence of quivers from $t_{w_B}$ to $t_{w_Z}=t^*$ is of almost cyclic type. Using Theorem \ref{mainthm12062011}, we see that $x_{d;t_{\omega_E}}^p x_{e;t_{\omega_E}}^q$ is equal to 
\begin{equation}\label{0303eq5c}
 \sum_{\begin{array}{c}\scriptstyle\tau_{0;1},\cdots,\tau_{n_1-3;1}\\ \scriptstyle  A_{n_1-1;1} - r_1s_{n_1-2;1}\geq 0\end{array}}  \!\!\!\!\!\!\!\!\!\!\!\!\!\! \left( \prod_{w=0}^{n_1-3}\!\!
 \left(\begin{array}{c}A_{w+1;1} - r_1s_{w;1}\!\!\!
 \\{\tau_{w;1}} \!\! \end{array} \right)\!\!\!\right){\widetilde{x_{d;t_{\omega_Z}}}}^ {A_{n_1-1;1}-r_1s_{n_1-2;1}}x_{e;t_{\omega_Z}}^ {r_1s_{n_1-3;1}-A_{n_1-2;1}   }  x_{d';t_{\omega_Z}}^{\omega_1 s_{n_1-2;1} - \xi_1  s_{n_1-3;1}}
 \end{equation}
\begin{equation}\label{0303eq5d}
+ \,\, x_{d;t_{\omega_Z}}^{-A_{n_1-1;1}} x_{e;t_{\omega_Z}}^{-A_{n_1-2;1}}
\sum_{\begin{array}{c}\scriptstyle(S_1,S_2)\\ \scriptstyle-A_{n_1-1;1}+r_1|S_2|> 0\end{array}}x_{d;t_{\omega_Z}}^{r_1|S_2|}x_{e;t_{\omega_Z}}^{r_1|S_1|} x_{d';t_{\omega_Z}}^{\xi_1(A_{n_1-1;1}-|S_1|)-(\xi_1r_1-\omega_1)|S_2|-M_{d'}}.
\end{equation}

From now on, we restrict ourselves to the most difficult case where
the full rank 3 subquivers  with vertices $d,d',e$ at  both seeds $t_{w_E}$ and $t$ are non-acyclic, in other words,
     $\omega_1 c^{[r_1]}_{{n_1}} -\xi_1 c^{[r_1]}_{{n_1}-1}> 0$ 
and   
    $c_{n_2}^{[\xi_1]}r_1- c_{n_2-1}^{[\xi_1]}\omega_1 >0$. 
    In particular, $\xi_2\ge 0$.
But the same argument can be adapted to case where the full rank~3 subquiver at either $t_{w_E}$ or $t$ is acyclic. We focus on the former case, because the positivity conjecture for acyclic cluster algebras was already proved in \cite{KQ}.
Thus assume that $(\omega_1c_{n_1}^{[r_1]}-\xi_1c_{n_1-1}^{[r_1]})>0$. In this case $M_{d'}=0$. 


Let $p_2= {A_{n_1-1;1}-r_1s_{n_1-2;1}}$
and $q_2= {\omega_1 s_{n_1-2;1} - \xi_1  s_{n_1-3;1}   }$ be the exponents of
$\widetilde{x_{d;t_{\omega_Z}}} $ 
and 
$x_{d';t_{\omega_Z}} $ 
in (\ref{0303eq5c}), respectively. Define $A_{i;2}=p_2c_{i+1}^{[r_2]}+q_2c_{i}^{[r_2]}$.

In case (a), applying Theorem~\ref{mainthm12062011} to $\widetilde{x_{d;t_{\omega_Z}}}^ {p_2} x_{d';t_{\omega_Z}}^{q_2}
$ in (\ref{0303eq5c}), we have that the part of (\ref{0303eq5c}) that contributes to $C'_t$ is equal to
\begin{eqnarray}\label{new18}
&& \displaystyle\sum_{\begin{array}{c}\scriptstyle\tau_{0;1},\cdots,\tau_{n_1-3;1}\\ \scriptstyle  A_{n_1-1;1} - r_1s_{n_1-2;1}\geq 0\end{array}}   \left( \prod_{w=0}^{n_1-3}\gchoose{A_{w+1;1} - r_1s_{w;1}}{\tau_{w;1}}   \right) x_{e;t_{\omega_Z}}^ {r_1s_{n_1-3;1}-A_{n_1-2;1}   }  \\
&&\displaystyle\times \sum_{\begin{array}{c} \scriptstyle  (S_1,S_2)\\ 
 \scriptstyle r_2|S_2|-A_{n_2-1;2} >0 \end{array}  } x_{d;t}^{r_2|S_2|-A_{n_2-1;2}}x_{d';t}^{r_2|S_1|-A_{n_2-2;2}} x_{e;t}^{\xi_2(A_{n_2-1;2}-|S_1|)-(\xi_2 r_2 - \omega_2)|S_2|}\nonumber\\
&= &\displaystyle\sum_{\begin{array}{c}\scriptstyle\tau_{0;1},\cdots,\tau_{n_1-3;1}\\ \scriptstyle  A_{n_1-1;1} - r_1s_{n_1-2;1}\geq 0\end{array}}   \left( \prod_{w=0}^{n_1-3}\gchoose{A_{w+1;1} - r_1s_{w;1}}{\tau_{w;1}}   \right)\nonumber \\
&&\displaystyle\times \sum_{\begin{array}{c} \scriptstyle  (S'_1,S'_2)\\ 
 \scriptstyle r_2|S'_2|-A_{n_2-1;2} >0 \end{array}  } \!\!\!\!\!x_{d;t}^{r_2|S'_2|-A_{n_2-1;2}}x_{d';t}^{r_2|S'_1|-A_{n_2-2;2}} x_{e;t}^{\xi_2(A_{n_2-1;2}-|S'_1|)-(\xi_2 r_2 - \omega_2)|S'_2|+{r_1s_{n_1-3;1}-A_{n_1-2;1}   }},\nonumber
\end{eqnarray}
where $(S'_1,S'_2)$ is a family of compatible pairs satisfying the condition in Theorem~\ref{mainthm12062011}.

Since $ r_2|S'_2|-A_{n_2-1;2} >0$ in this last expression, then 
$A_{n_2-1;2} / r_2|S'_2| <1$ and thus
\begin{equation}\label{20140625eq5} \frac{r_1A_{n_2-1;2}}{c_{n_2-1}^{[r_2]} \,r_2} =
 \frac{r_1|S'_2|}{c_{n_2-1}^{[r_2]} } \frac{\,A_{n_2-1;2}}{r_2|S'_2|}
<
\frac{r_1|S'_2|}{c_{n_2-1}^{[r_2]}} 
\le \xi_2 (A_{n_2-1;2}-|S'_1|)-(\xi_2 r_2 - \omega_2)|S'_2|,\end{equation}
where the last inequality follows from Lemma~\ref{0413lem} below.
Using $r_2=\xi_1$ and the definition of $A_{n_2-1;2}$, we get
$$\aligned
& r_1s_{n_1-3;1}-A_{n_1-2;1} + \xi_2(A_{n_2-1;2}-|S'_1|)-(\xi_2 r_2 - \omega_2)|S'_2| \\
&> r_1s_{n_1-3;1}-A_{n_1-2;1} + \frac{r_1\left( c_{n_2}^{[\xi_1]}(A_{n_1-1;1}-r_1s_{n_1-2;1} ) + c_{n_2-1}^{[\xi_1]}(\omega_1 s_{n_1-2;1} - \xi_1 s_{n_1-3;1})\right)}{c_{n_2-1}^{[\xi_1]} \xi_1}\\
&= -A_{n_1-2;1} + \frac{r_1\left( c_{n_2}^{[\xi_1]}(A_{n_1-1;1}-r_1s_{n_1-2;1} ) + c_{n_2-1}^{[\xi_1]}\omega_1 s_{n_1-2;1}\right)}{c_{n_2-1}^{[\xi_1]} \xi_1}\\
&= -A_{n_1-2;1} + \frac{r_1\left( c_{n_2}^{[\xi_1]}A_{n_1-1;1}-(c_{n_2}^{[\xi_1]}r_1- c_{n_2-1}^{[\xi_1]}\omega_1) s_{n_1-2;1}\right)}{c_{n_2-1}^{[\xi_1]} \xi_1}\\
&\underset{(s_{n_1-2;1}\leq A_{n_1-1;1}/r_1)}{\geq}  -A_{n_1-2;1} + \frac{r_1\left( c_{n_2}^{[\xi_1]}A_{n_1-1;1}-(c_{n_2}^{[\xi_1]}r_1- c_{n_2-1}^{[\xi_1]}\omega_1) \frac{A_{n_1-1;1}}{r_1}\right)}{c_{n_2-1}^{[\xi_1]} \xi_1}\endaligned$$
\begin{equation}\label{eq 44.1}
 \aligned 
&= \frac{1}{\xi_1}(\omega_1A_{n_1-1;1} -\xi_1A_{n_1-2;1})\\
&=\frac{1}{\xi_1}( \omega_1(pc^{[r_1]}_{n_1}+qc^{[r_1]}_{{n_1}-1}) -\xi_1(pc^{[r_1]}_{{n_1}-1} + qc^{[r_1]}_{{n_1}-2}))\\
&=\frac{1}{\xi_1}( p(\omega_1 c^{[r_1]}_{n_1} -\xi_1c^{[r_1]}_{{n_1}-1}) + q (\omega_1 c^{[r_1]}_{{n_1}-1} -\xi_1 c^{[r_1]}_{{n_1}-2}))\\
&>0,
\endaligned\end{equation}
where the last inequality holds, because from the first case of Lemma \ref{lem nonacyclic} we see that  
$$\aligned
&\{\omega_1 c^{[r_1]}_{n_1} -\xi_1c^{[r_1]}_{n_1-1}, \omega_1 c^{[r_1]}_{n_1-1} -\xi_1 c^{[r_1]}_{n_1-2}\}\\&=\{(\text{the number of arrows between } d'\text{ and } d\text{ in the seed }t_{w_E} ), \\
&\,\,\,\,\,\,\,\,\,\,\,(\text{the number of arrows between }d' \text{ and } e\text{ in the seed }t_{w_E})\}. \endaligned$$

 Thus in case (a), the exponent of $x_{e;t}$ in the expansion of (\ref{new18}) is positive. Thus the terms with negative exponents on $x_{e;t}$ in (\ref{0303eq5c}) are all in $B'_t$. The proof that  the terms with negative exponents on $x_{e;t}$ in  (\ref{0303eq5d}) are also all in $B'_t$ uses a similar argument. For the sake of completeness, we include the details. 
 
Let $p_3=r_1|S_{2}|-A_{n_1-1;1}$ 
and $q_3=\xi_1(A_{n_1-1;1} - |S_{1}|) - (\xi_1r_1-\omega_1)|S_{2}|$,  and let $A_{i;3}=p_3 c_{i+1}^{[r_2]}+q_3 c_{i}^{[r_2]} $.
Applying Theorem~\ref{mainthm12062011} to $x_{d;t_{\omega_Z}}^{p_3}
x_{d';t_{\omega_Z}}^{q_3} $ in (\ref{0303eq5d}), the exponents of 
$x_{e;t}$ in the part 
of (\ref{0303eq5d}) that contributes to  $C_t'$ are of the form
\begin{equation}\label{20140624eq1}
\xi_2(A_{n_2-2;3}-|S'_{1}|) - (\xi_2r_2-\omega_2)|S'_{2}|
+r_1|S_{1}|-A_{n_1-2;1}.
\end{equation}We will derive the same conclusion as above, namely $\eqref{20140624eq1}> \frac{1}{\xi_1}(\omega_1A_{n_1-1;1} -\xi_1A_{n_1-2;1})$.

Using $r_2=\xi_1$, the definition of $A_{n_2-2;3}$ and the inequality \eqref{20140625eq5} with $n_2-1$ replaced by $n_2-2$,
we get 
{\tiny $$\aligned
&\eqref{20140624eq1}
> r_1|S_{1}|-A_{n_1-2;1}+ \frac{r_1}{\xi_1c_{n_2-2}^{[\xi_1]}}
\left(c_{n_2-1}^{[\xi_1]}(r_1|S_{2}|-A_{n_1-1;1}) + c_{n_2-2}^{[\xi_1]}(\xi_1(A_{n_1-1;1} - |S_{1}|) - (\xi_1r_1-\omega_1)|S_{2}|) \right)\\
&= -A_{n_1-2;1}+ \frac{r_1}{\xi_1c_{n_2-2}^{[\xi_1]}}
\left(c_{n_2-1}^{[\xi_1]}(r_1|S_{2}|-A_{n_1-1;1}) + c_{n_2-2}^{[\xi_1]}(\xi_1A_{n_1-1;1} - (\xi_1r_1-\omega_1)|S_{2}|) \right)\\
&= -A_{n_1-2;1}+ \frac{r_1}{\xi_1c_{n_2-2}^{[\xi_1]}}
\left(c_{n_2-1}^{[\xi_1]}r_1|S_{2}|+ c_{n_2-3}^{[\xi_1]}A_{n_1-1;1} - c_{n_2-2}^{[\xi_1]}(\xi_1r_1-\omega_1)|S_{2}|) \right)\\
&=\frac{1}{\xi_1c_{n_2-2}^{[\xi_1]}}\left(r_1c_{n_2-3}^{[\xi_1]}A_{n_1-1;1} - \xi_1c_{n_2-2}^{[\xi_1]}A_{n_1-2;1}
+r_1|S_{2}|(r_1c_{n_2-1}^{[\xi_1]} -(\xi_1r_1-\omega_1)c_{n_2-2}^{[\xi_1]} ) \right)\\
&\underset{(r_1|S_{2}|-A_{n_1-1;1}>0)}{>} \frac{1}{\xi_1c_{n_2-2}^{[\xi_1]}}\left(r_1c_{n_2-3}^{[\xi_1]}A_{n_1-1;1} - \xi_1c_{n_2-2}^{[\xi_1]}A_{n_1-2;1}
+A_{n_1-1;1}(r_1c_{n_2-1}^{[\xi_1]} -(\xi_1r_1-\omega_1)c_{n_2-2}^{[\xi_1]})  \right)\\
&= \frac{1}{\xi_1c_{n_2-2}^{[\xi_1]}}\left(r_1c_{n_2-3}^{[\xi_1]}A_{n_1-1;1} - \xi_1c_{n_2-2}^{[\xi_1]}A_{n_1-2;1}
+A_{n_1-1;1}(\omega_1c_{n_2-2}^{[\xi_1]} -r_1c_{n_2-3}^{[\xi_1]})  \right)\\
&= \frac{1}{\xi_1c_{n_2-2}^{[\xi_1]}}\left(A_{n_1-1;1}\omega_1c_{n_2-2}^{[\xi_1]}  - \xi_1c_{n_2-2}^{[\xi_1]}A_{n_1-2;1}\right)\\
&=\frac{1}{\xi_1}(\omega_1A_{n_1-1;1} -\xi_1A_{n_1-2;1}).
\endaligned
$$}

 In case (b), applying Theorem~\ref{mainthm12062011}  to $\widetilde{x_{d;t_{\omega_Z}}}^ {p_2} x_{d';t_{\omega_Z}}^{q_2}
$ in (\ref{0303eq5c}), we see that the part of (\ref{0303eq5c}) that contributes to $B'_t$ is equal to
\begin{equation}\label{07252012eq1}
\aligned
&\sum_{\tau_{0;1},\tau_{1;1},\cdots,\tau_{n_1-3;1}\atop  A_{n_1-1;1} - r_1s_{n_1-2;1}\geq 0}   \left( \prod_{w=0}^{n_{1}-3}\gchoose{A_{w+1;1} - r_1s_{w;1}}{\tau_{w;1}}   \right) x_{e;t}^ {r_1s_{n_1-3;1}-A_{n_1-2;1} } 
 \\
&\times \sum_{\begin{array}{c}\scriptstyle\tau_{0;2},\tau_{1;2},\cdots,\tau_{n_{2}-3;2}\\\scriptstyle { A_{n_2-1;2}}-{r_2} s_{n_2-2;2}> 0 \end{array}}   \left( \prod_{w=0}^{n_{2}-3}\gchoose{A_{w+1;2} - r_{2}s_{w;2}}{\tau_{w;2}}   \right) \\
&\times  \widetilde{x_{d;t}}^ {A_{n_2-1;2}-r_{2} s_{n_{2}-2;2} }
 x_{d';t}^{ r_2s_{n_2-3;2}-A_{n_2-2;2}} x_{e;t}^{\omega_2s_{n_2-2;2}-\xi_2s_{n_2-3;2}-M_e}\\
  \\
 =
&\sum_{\tau_{0;1},\tau_{1;1},\cdots,\tau_{n_1-3;1}\atop  A_{n_1-1;1} - r_1s_{n_1-2;1}\geq 0}   \left( \prod_{w=0}^{n_{1}-3}\gchoose{A_{w+1;1} - r_1s_{w;1}}{\tau_{w;1}}   \right)  \\
&\times \sum_{\begin{array}{c}\scriptstyle\tau_{0;2},\tau_{1;2},\cdots,\tau_{n_{2}-3;2}\\\scriptstyle { A_{n_2-1;2}}-{r_2} s_{n_2-2;2}> 0 \end{array}}   \left( \prod_{w=0}^{n_{2}-3}\gchoose{A_{w+1;2} - r_{2}s_{w;2}}{\tau_{w;2}}   \right) \\
&\times  \widetilde{x_{d;t}}^ {A_{n_2-1;2}-r_{2} s_{n_{2}-2;2} }
 x_{d';t}^{ r_2s_{n_2-3;2}-A_{n_2-2;2}}
 x_{e;t}^{\omega_2s_{n_2-2;2}-\xi_2s_{n_2-3;2}+r_1s_{n_1-3;1}-A_{n_1-2;1}-M_e}  \\
 \endaligned\end{equation} 
where $M_e=\omega_2{A_{n_2-2;2}} - \xi_2A_{n_2-3;2}$,
and $s_{i;2}$ is as defined before Lemma \ref{lem sn} but in terms of $p_2, q_2,$ and $r_2$, thus  $s_{i;2}=\sum_{j=0}^{i-1}c_{i-j+1}^{[r_2] }\tau_{j;2}$.

In this last expression, we have  \begin{equation}\label{07272012_eq02}
\frac{ A_{n_2-1;2}}{r_2} > s_{n_2-2;2}.
\end{equation}
We want to show that the exponent of $x_{e;t}$ is positive, that is,
\begin{equation}\label{07272012_eq03}\omega_2s_{n_2-2;2}-\xi_2s_{n_2-3;2}+r_1s_{n_1-3;1}-A_{n_1-2;1}-(\omega_2{A_{n_2-2;2}} - \xi_2A_{n_2-3;2})>0. \end{equation}

Thanks to Lemma~\ref{lem 514} below, we have $$s_{n_2-2;2} A_{n_2-2;2} >  s_{n_2-3;2} A_{n_2-1;2},
$$ 
and thus
it suffices to show that 
\begin{equation}\label{eq 7.27} \left(\omega_2  -\xi_2\frac{A_{n_2-2;2}}{A_{n_2-1;2}}\right)s_{n_2-2;2}+r_1s_{n_1-3;1}-A_{n_1-2;1}-(\omega_2{A_{n_2-2;2}} - \xi_2A_{n_2-3;2})>0.\end{equation}
%
%
$$\Longleftrightarrow \quad {r_1s_{n_1-3;1}-A_{n_1-2;1}-(\omega_2{A_{n_2-2;2}} - \xi_2A_{n_2-3;2})} > \left({ \xi_2\frac{A_{n_2-2;2}}{A_{n_2-1;2}} - \omega_2 }\right)s_{n_2-2;2}$$
and by (\ref{07272012_eq02}) it suffices to show
$$
 r_1s_{n_1-3;1}-A_{n_1-2;1}-(\omega_2{A_{n_2-2;2}} - \xi_2A_{n_2-3;2}) >  \left( \xi_2\frac{A_{n_2-2;2}}{A_{n_2-1;2}} - \omega_2 \right)\frac{A_{n_2-1;2}}{r_2}$$


$${\Longleftrightarrow} \quad \left(r_1s_{n_1-3;1}-A_{n_1-2;1}+ (\xi_2A_{n_2-3;2}- \omega_2{A_{n_2-2;2}})\right)r_2 >  \xi_2A_{n_2-2;2}- \omega_2{A_{n_2-1;2}} $$

\begin{equation}\label{07272012_eq01}
\stackrel{\textup{Lemma}~\ref{negone}}{\Longleftrightarrow}  \quad \xi_2A_{n_2-4;2}- \omega_2{A_{n_2-3;2}}> r_2\left(A_{n_1-2;1}-r_1s_{n_1-3;1}\right). \end{equation}

Since
$$
A_{i;2}=p_2c_{i+1}^{[r_2]} + q_2 c_{i}^{[r_2]} = (A_{n_1-1;1}-r_1s_{n_1-2;1})c_{i+1}^{[r_2]} + (\omega_1 s_{n_1-2;1}-\xi_1s_{n_1-3;1})c_{i}^{[r_2]}, 
$$
the inequality (\ref{07272012_eq01}) is equivalent to 
 $$\begin{array}{c}(A_{n_1-1;1}-r_1s_{n_1-2;1})(\xi_2c_{n_2-3}^{[r_2]} - \omega_2  c_{n_2-2}^{[r_2]}) + (\omega_1 s_{n_1-2;1}-\xi_1s_{n_1-3;1})(\xi_2c_{n_2-4}^{[r_2]} - \omega_2  c_{n_2-3}^{[r_2]})\\
 > r_2\left(A_{n_1-2;1}-r_1s_{n_1-3;1}\right)      
 \end{array}  $$

$$\stackrel{\textup{Lemma}~\ref{lem nonacyclic}}{\Longleftrightarrow} \quad (A_{n_1-1;1}-r_1s_{n_1-2;1})\omega_1 + (\omega_1 s_{n_1-2;1}-\xi_1s_{n_1-3;1})r_1 > \xi_1\left(A_{n_1-2;1}-r_1s_{n_1-3;1}\right)       $$

\begin{equation}\label{20140625eq7}\Longleftrightarrow \quad A_{n_1-1;1}\omega_1  > \xi_1 A_{n_1-2;1}  .      \end{equation}
Note that cases (a) and (b) agree on the first mutation sequence, and so equation (\ref{eq 44.1}) is valid in both cases, and it implies $ A_{n_1-1;1}\omega_1  > \xi_1 A_{n_1-2;1}  $. 
The proof that the exponents of $x_{e;t}$ in the part of (\ref{0303eq5d}) that contributes to $B'_t$ are non-negative
uses a similar argument. We give an outline as follows.

{
Applying Theorem~\ref{mainthm12062011} to $x_{d;t_{\omega_Z}}^{p_3}
x_{d';t_{\omega_Z}}^{q_3} $ in (\ref{0303eq5d}), the exponents of 
$x_{e;t}$ in the part 
of (\ref{0303eq5d}) that contributes to  $B_t'$ are of the form
\begin{equation}\label{20140624eq2}
\omega_2 s_{n_2-3;2}- \xi_2 s_{n_2-4;2}
+r_1|S_{1}|-A_{n_1-2;1} - (\omega_2 A_{n_2-3;2}
- \xi_2 A_{n_2-4;2}).
\end{equation}
Using the same process as above, we get the following analogue of (\ref{07272012_eq01}):
\begin{equation}\label{20140624eq3}
\xi_2 A_{n_2-5;2}- \omega_2 A_{n_2-4;2}
> r_2(A_{n_1-2;1} - r_1|S_{1}|),
\end{equation}
which is obtained by replacing $n_2$ with $n_2-1$ and $s_{n_1-3;1}$ with $|S_{1}|$.
Since $A_{i;2}=q_3c_{i+1}^{[r_2]} + p_3c_{i}^{[r_2]}$,
the inequality \eqref{20140624eq3} is equivalent to 
$$
p_3(\xi_2 c_{n_2-5}^{[r_2]} - \omega_2 c_{n_2-4}^{[r_2]})
+q_3(\xi_2 c_{n_2-4}^{[r_2]} - \omega_2 c_{n_2-3}^{[r_2]})> r_2(A_{n_1-2;1} - r_1|S_{1}|)
$$
{\tiny$$
\overset{Lemma~\ref{lem nonacyclic}}{\Longleftrightarrow}
(r_1|S_{2}|-A_{n_1-1;1})(\xi_1r_1-\omega_1) + 
(\xi_1(A_{n_1-1;1} - |S_{1}|) - (\xi_1r_1-\omega_1)|S_{2}|)r_1
> \xi_1(A_{n_1-2;1} - r_1|S_{1}|)
$$}
$$
\Longleftrightarrow
\omega_1A_{n_1-1;1} > \xi_1A_{n_1-2;1}, 
$$which is exactly the same as \eqref{20140625eq7}.}

 Thus the exponents of $x_{e;t}$ in $B_t'$ are non-negative.

This completes the proof of Proposition \ref{prop2.13} in cases (a) and (b), modulo the following two Lemmas.

\begin{lemma}\label{lem 514} $$s_{n_2-2;2} A_{n_2-2;2} >  s_{n_2-3;2} A_{n_2-1;2}.
$$ 
\end{lemma}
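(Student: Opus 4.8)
The plan is to reduce the lemma to a clean summation identity for the difference of the two sides, using only the two linear recurrences already available in the excerpt. Set, throughout, $r=r_2$ and abbreviate the $\ell=2$ sequences by writing $A_i=A_{i;2}$, $s_i=s_{i;2}$, $\tau_i=\tau_{i;2}$. Define
\[
D_i := s_{i}A_{i} - s_{i-1}A_{i+1},
\]
so that the quantity to be estimated is exactly $D_{n_2-2}=s_{n_2-2;2}A_{n_2-2;2}-s_{n_2-3;2}A_{n_2-1;2}$. First I would establish a first-order recursion for $D_i$ and then solve it explicitly.

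The two inputs are Lemma~\ref{negone}(a), which gives $A_{i+1}=r A_{i}-A_{i-1}$, hence $A_{i+1}+A_{i-1}=rA_i$, and Lemma~\ref{lem sn}, which gives $s_i=rs_{i-1}-s_{i-2}+\tau_{i-1}$, hence $s_i+s_{i-2}=rs_{i-1}+\tau_{i-1}$. Subtracting consecutive values of $D_i$ and regrouping,
\[
D_i-D_{i-1}=(s_i+s_{i-2})A_i-s_{i-1}(A_{i+1}+A_{i-1})=(rs_{i-1}+\tau_{i-1})A_i-s_{i-1}(rA_i)=\tau_{i-1}A_i.
\]
With the boundary value $D_0=0$ (using $s_0=0$ and the natural convention $s_{-1}=0$, which is consistent with $s_1=c^{[r]}_2\tau_0=\tau_0$ and $D_1=s_1A_1=\tau_0 A_1$), summing the telescoping relation yields the identity
\[
s_{n_2-2;2}A_{n_2-2;2}-s_{n_2-3;2}A_{n_2-1;2}=D_{n_2-2}=\sum_{i=1}^{n_2-2}\tau_{i-1;2}\,A_{i;2}.
\]

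It remains to argue that the right-hand side is strictly positive. Each $\tau_{i-1;2}$ is a nonnegative summation index, so every summand is $\ge 0$, and it suffices to check that the factors $A_{i;2}$ are positive for $1\le i\le n_2-2$ and that at least one $\tau_{i-1;2}$ is nonzero. Positivity of the $A_{i;2}=p_2c^{[r_2]}_{i+1}+q_2c^{[r_2]}_{i}$ is exactly where the restriction to the cyclic case enters: we are in case (a) or (b) with $r_2=\xi_1\ge 2$ and with $p_2,q_2\ge 0$ (these are the exponents of $\widetilde{x_{d;t_{\omega_Z}}}$ and $x_{d';t_{\omega_Z}}$ in (\ref{0303eq5c}), nonnegative by the first case of Lemma~\ref{lem nonacyclic}), so that $c^{[r_2]}_k>0$ for $k\ge 2$ forces $A_{i;2}>0$. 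Finally, since $s_{n_2-2;2}=\sum_{j=0}^{n_2-3}c^{[r]}_{n_2-1-j}\tau_{j;2}$ is a combination of the $\tau_{j;2}$ with strictly positive coefficients, the hypothesis $s_{n_2-2;2}>0$ operative in the application is equivalent to some $\tau_{i-1;2}>0$, giving strictness. I expect the only delicate point to be this last one: verifying that each $A_{i;2}>0$ throughout the cyclic range (as opposed to merely the endpoint value) and confirming that the degenerate case $s_{n_2-2;2}=0$, in which both sides vanish, does not occur as a nontrivial summand in (\ref{07272012_eq03}); the algebraic identity itself is routine once the two recurrences are invoked.
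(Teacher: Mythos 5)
Your proof is correct in substance but takes a genuinely different route from the paper's. You obtain the closed identity $s_{n_2-2;2}A_{n_2-2;2}-s_{n_2-3;2}A_{n_2-1;2}=\sum_{i=0}^{n_2-3}\tau_{i;2}\,A_{i+1;2}$ by telescoping $D_i=s_iA_i-s_{i-1}A_{i+1}$, using only the two linear recurrences (Lemma~\ref{negone}(a) and Lemma~\ref{lem sn}); the computation checks out, and it shows the inequality holds (at least weakly) for \emph{arbitrary} nonnegative summation indices, with no reference to the constraint on the $\tau_{i;2}$. The paper argues quite differently: it invokes the second line of condition (\ref{cond502b0303}) together with the relations $s'_{n_2-3}=kc^{[r_2]}_{n_2-2}$, $s'_{n_2-2}=kc^{[r_2]}_{n_2-1}$ from Definition~\ref{20120121} and the quadratic identities of Lemma~\ref{lem cn} and Lemma~\ref{negone}(b), so its proof appears to depend on the $\mathcal{L}_{\max}$ constraint. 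Your identity is the more elementary and more transparent argument, and it reveals that this constraint is not actually needed for the inequality in non-strict form. The one genuine caveat is strictness: $A_{1;2}=p_2$ may vanish, and if all $\tau_{i;2}=0$ both sides are equal, so the inequality as literally stated can degenerate to an equality. You flag this yourself; note that the paper's own proof suffers from the same blind spot (its strict steps also collapse to equalities when $k=0$ and $s_i=s'_i$), and that only the weak form $s_{n_2-2;2}A_{n_2-2;2}\ge s_{n_2-3;2}A_{n_2-1;2}$ is actually used to pass from (\ref{07272012_eq03}) to (\ref{eq 7.27}), so your argument fully serves the application.
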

\begin{proof}
It follows from Definition~\ref{20120121}, that  $s'_{n_2-2} c_{n_2-2}^{[r_2]}  = s'_{n_2-3} c_{n_2-1}^{[r_2]}$, and using $(c_{n_2-1}^{[r_2]})^2>c_{n_2}^{[r_2]}c_{n_2-2}^{[r_2]}$ from Lemma~\ref{lem cn}, this implies
\begin{equation}\label{eq 514} s'_{n_2-2} c_{n_2-1}^{[r_2]}  > s'_{n_2-3} c_{n_2}^{[r_2]}  .\end{equation}

On the other hand, the second line of (\ref{cond502b0303}) in Theorem \ref{cor 26} together with  $A_{n_2-2}A_{n_2-2}>A_{n_2-1}A_{n_2-3}$ from Lemma~\ref{negone} implies

\[(s_{n_2-2} - s'_{n_2-2}) A_{n_2-2} > (s_{n_2-3} - s'_{n_2-3}) A_{n_2-1},     
\]
and, using the definition of $A_{n_2-i}$, we get
\[(s_{n_2-2} - s'_{n_2-2}) (p_2c^{[r_2]}_{n_2-1} + q_2c^{[r_2]}_{n_2-2}) > (s_{n_2-3} - s'_{n_2-3})(p_2c^{[r_2]}_{n_2} + q_2c^{[r_2]}_{n_2-1}).   \]
Now (\ref{eq 514}) implies  the statement.
\end{proof}

\begin{lemma}\label{0413lem} 
Let $(S_1=\cup_{i=1}^{p+q}\, S_{1}^i,S_2=\cup_{i=1}^{p+q}\, S_{2}^i)$ such that  
$$(S_{1}^i,S_{2}^i) \textup{ is a compatible pair in }\left\{\begin{array}{ll}\mathcal{D}^{c_{n_2}^{[r_2]}\times c_{n_2-1}^{[r_2]}}  &\textup{ if $1\le i \le p_2$}; \\  
 \mathcal{D}^{c_{n_2-1}^{[r_2]}\times c_{n_2-2}^{[r_2]}} &\textup{ if $p_2+1\le i\le p_2+q_2$;}
\end{array}\right.$$
where $p_2, q_2$ are arbitrary non-negative integers. 
Then 
\[
\frac{r_1|S_2|}{c_{n_2-1}^{[r_2]}} 
\le \xi_2 (A_{n_2-1;2}-|S_1|)-(\xi_2 r_2 - \omega_2)|S_2|\]
\end{lemma}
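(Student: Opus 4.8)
The plan is to prove the inequality term by term over the index $i$, after which it becomes a clean statement about a single maximal Dyck path. Since the $p_2+q_2$ paths have horizontal dimensions $a_{1,i}=c_{n_2}^{[r_2]}$ for $1\le i\le p_2$ and $a_{1,i}=c_{n_2-1}^{[r_2]}$ for $p_2+1\le i\le p_2+q_2$, these dimensions add up to $p_2c_{n_2}^{[r_2]}+q_2c_{n_2-1}^{[r_2]}=A_{n_2-1;2}$. Writing $|S_1|=\sum_i|S_1^i|$ and $|S_2|=\sum_i|S_2^i|$, the asserted inequality is the sum over $i$ of the per-path inequalities
\[\frac{r_1}{c_{n_2-1}^{[r_2]}}\,|S_2^i|\le \xi_2\big(a_{1,i}-|S_1^i|\big)-(\xi_2 r_2-\omega_2)|S_2^i|,\]
so it suffices to establish, for a single compatible pair $(T_1,T_2)$ in $\mathcal{D}^{a_1\times a_2}$ with $(a_1,a_2)=(c_{n_2}^{[r_2]},c_{n_2-1}^{[r_2]})$ or $(c_{n_2-1}^{[r_2]},c_{n_2-2}^{[r_2]})$, the bound
\[\frac{r_1}{c_{n_2-1}^{[r_2]}}\,|T_2|\le \xi_2\big(a_1-|T_1|\big)-(\xi_2 r_2-\omega_2)|T_2|.\]

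First I would eliminate $\xi_2$ and $\omega_2$ in favour of the data $r_1,\omega_1$ of the quiver $Q_1$ at $t^{**}$. Because the sequence from $t^{**}$ to $t'$ mutates only in directions $d,d'$, the first case of Lemma~\ref{lem nonacyclic} (in force here, since the subquivers on $\{d,d',e\}$ stay cyclic) gives $\xi_2=c_{n_2}^{[r_2]}r_1-c_{n_2-1}^{[r_2]}\omega_1$ and $\omega_2=c_{n_2-1}^{[r_2]}r_1-c_{n_2-2}^{[r_2]}\omega_1$; indeed the standing assumption $c_{n_2}^{[r_2]}r_1-c_{n_2-1}^{[r_2]}\omega_1>0$ of this part of the proof is exactly $\xi_2>0$. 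Lemma~\ref{lem cn} then yields the conservation identity
\[\omega_2c_{n_2-1}^{[r_2]}-\xi_2c_{n_2-2}^{[r_2]}=r_1\big((c_{n_2-1}^{[r_2]})^2-c_{n_2}^{[r_2]}c_{n_2-2}^{[r_2]}\big)=r_1,\]
the $\omega_1$-contributions cancelling. Substituting this, together with $c_{n_2}^{[r_2]}=r_2c_{n_2-1}^{[r_2]}-c_{n_2-2}^{[r_2]}$, the coefficient of $|T_2|$ on the right-hand side becomes
\[\xi_2 r_2-\omega_2+\frac{r_1}{c_{n_2-1}^{[r_2]}}=\xi_2\,\frac{c_{n_2}^{[r_2]}}{c_{n_2-1}^{[r_2]}}.\]
When $\xi_2=0$ the identity forces $\omega_2=r_1/c_{n_2-1}^{[r_2]}$ and the per-path inequality is an equality; when $\xi_2>0$, dividing by $\xi_2$ reduces it to the purely combinatorial inequality
\[c_{n_2-1}^{[r_2]}\big(a_1-|T_1|\big)\ge c_{n_2}^{[r_2]}\,|T_2|,\]
which I will call $(\star)$.

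Finally I would prove $(\star)$ by minimizing the linear function $g(T_1,T_2)=c_{n_2-1}^{[r_2]}(a_1-|T_1|)-c_{n_2}^{[r_2]}|T_2|$, whose coefficients in $|T_1|$ and $|T_2|$ are nonpositive, over all $r_2$-compatible pairs in the single path. By the single-path case of Lemma~\ref{12312011}, the minimum is attained at one of the three configurations $(\mathcal{D}_1,\emptyset)$, $(\emptyset,\emptyset)$, $(\emptyset,\mathcal{D}_2)$. At $(\mathcal{D}_1,\emptyset)$ one gets $g=0$; at $(\emptyset,\emptyset)$ one gets $g=c_{n_2-1}^{[r_2]}a_1\ge 0$; and at $(\emptyset,\mathcal{D}_2)$ one gets $g=c_{n_2-1}^{[r_2]}a_1-c_{n_2}^{[r_2]}a_2$, which equals $0$ when $(a_1,a_2)=(c_{n_2}^{[r_2]},c_{n_2-1}^{[r_2]})$ and equals $(c_{n_2-1}^{[r_2]})^2-c_{n_2}^{[r_2]}c_{n_2-2}^{[r_2]}=1$ (Lemma~\ref{lem cn}) when $(a_1,a_2)=(c_{n_2-1}^{[r_2]},c_{n_2-2}^{[r_2]})$. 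As all three values are nonnegative, $g\ge 0$ on every compatible pair, which is $(\star)$, and hence the Lemma follows.

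The step I expect to require the most care is the derivation of the formulas for $\xi_2,\omega_2$ from Lemma~\ref{lem nonacyclic}: one must match $Q_1$ to the normal form of that lemma, keeping track of the orientation of the $d$--$d'$ arrow and of the parity of the $d,d'$-sequence (the first mutation is in direction $d$ and $n_2$ is even), so that precisely the first-case formulas apply throughout; the conservation identity and the reduction to $(\star)$ are then immediate from Lemma~\ref{lem cn}. A secondary, minor point is to confirm that the extremal principle of Lemma~\ref{12312011} is insensitive to the actual coefficients occurring in $g$, which holds because its proof is a combinatorial statement about maximal Dyck paths rather than about the specific arrow numbers.
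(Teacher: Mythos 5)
Your proposal is correct in all of its reductions and takes a genuinely different route from the paper, whose entire proof of Lemma~\ref{0413lem} consists of citations to \cite[Lemma 4.10]{LS3v1} (colored subpaths) and \cite[Proposition 4.1]{LLZ}. You make the content explicit: the term-by-term decomposition over the $p_2+q_2$ paths (valid since $\sum_i a_{1,i}=p_2c_{n_2}^{[r_2]}+q_2c_{n_2-1}^{[r_2]}=A_{n_2-1;2}$ and both sides are additive), the substitution $\xi_2=c_{n_2}^{[r_2]}r_1-c_{n_2-1}^{[r_2]}\omega_1$, $\omega_2=c_{n_2-1}^{[r_2]}r_1-c_{n_2-2}^{[r_2]}\omega_1$ (these are exactly the relations~(\ref{eq 4.8.1}) derived later in the proof of Proposition~\ref{prop2.14}), the conservation identity $\omega_2c_{n_2-1}^{[r_2]}-\xi_2c_{n_2-2}^{[r_2]}=r_1$ from Lemma~\ref{lem cn}, and the reduction to $c_{n_2-1}^{[r_2]}(a_1-|T_1|)\ge c_{n_2}^{[r_2]}|T_2|$ all check out, as do the three corner evaluations ($0$, $c_{n_2-1}^{[r_2]}a_1$, and $0$ or $1$). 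What your argument buys is a clean isolation of the single combinatorial input: that a functional $-\alpha|T_1|-\beta|T_2|$ with $\alpha,\beta>0$ is minimized over compatible pairs at one of the three extreme configurations, which is equivalent to the inequality $a_2|T_1|+a_1|T_2|\le a_1a_2$ for every compatible pair --- precisely what the references cited by the paper prove. The one place you lean on something not actually available here is the application of Lemma~\ref{12312011} with the coefficients $(c_{n_2-1}^{[r_2]},c_{n_2}^{[r_2]})$, which are not arrow counts: that lemma is stated only for the quiver data $\xi_f,\omega_f$ and its proof is itself deferred to \cite{LS3}, so the coefficient-independence you assert is plausible and true but not verifiable from this text. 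Since the paper's own proof is no more self-contained at this point, this is a matter of attribution rather than a genuine gap.
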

\begin{proof} This is proved in \cite[Lemma 4.10]{LS3v1} using colored subpaths with $|\beta|_2=A_{n_2-1;2} -|S_1| $ and $|\beta|_1=|S_2|$.  This is also proved in \cite[Proposition 4.1]{LLZ} case 6  with $a_2=A_{n_2-1;2}, b=c=r_2$.
\end{proof}

Now suppose we are in the case (c). Using Theorem \ref{mainthm12062011}, we see that $x_{d;t_{\omega_E}}^p x_{e;t_{\omega_E}}^q$ is equal to the following expression, which is almost the same as the expression in (\ref{0303eq5c}) and (\ref{0303eq5d}) up to the exponent $M'_{d'}$.
\begin{equation}\label{0729eq5a}
 \sum_{\begin{array}{c}\scriptstyle\tau_{0;1},\cdots,\tau_{n_1-3;1}\\ \scriptstyle A_{n_1-1;1}-r_1s_{n_1-2;1}\geq 0\end{array}}  \!\!\!\!\!\!\!\!\!\!\!\! \left( \prod_{w=0}^{n_1-3}\gchoose{\!\!\!A_{w+1;1} - r_1s_{w;1}\!\!\!}{\tau_{w;1}} \!\!  \right){\widetilde{x_{d;t_{\omega_Z}}}}^ {A_{n_1-1;1}-r_1s_{n_1-2;1}}x_{e;t_{\omega_Z}}^ {r_1s_{n_1-3;1}-A_{n_1-2;1}   }  x_{d';t_{\omega_Z}}^{\omega_1 s_{n_1-2;1} - \xi_1  s_{n_1-3;1}   -M'_{d'}}
 \end{equation}
\begin{equation}\label{0729eq5b}
+ \,\, x_{d;t_{\omega_Z}}^{-A_{n_1-1;1}} x_{e;t_{\omega_Z}}^{-A_{n_1-2;1}}
\sum_{\begin{array}{c}\scriptstyle(S_1,S_2)\\ \scriptstyle-A_{n_1-1;1}+r_1|S_2|> 0\end{array}}
x_{d;t_{\omega_Z}}^{r_1|S_2|}
x_{e;t_{\omega_Z}}^{r_1|S_1|}
 x_{d';t_{\omega_Z}}^{\xi_1(A_{n_1-1;1}-|S_1|)-(\xi_1r_1-\omega_1)|S_2|-M_{d'}}.
\end{equation}

Let $p_2=r_1|S_2|-A_{n_1-1;1}$ and $q_2={\xi_1(A_{n_1-1;1}-|S_1|)-(\xi_1r_1-\omega_1)|S_2|-M_{d'}}$ 
be the exponents of $x_{d;t_{\omega_Z}}$ and $x_{d';t_{\omega_Z}}$ in (\ref{0729eq5b}), respectively. Applying Theorem~\ref{mainthm12062011} to $x_{d;t_{\omega_Z}}^ {p_2} x_{d';t_{\omega_Z}}^{q_2}$, we see that the part of (\ref{0729eq5b}) that contributes to $C'_t$ is equal to    $$\aligned
&\sum_{\begin{array}{c}\scriptstyle(S_1,S_2)\\ \scriptstyle-A_{n_1-1;1}+r_1|S_2|> 0\end{array}} x_{e;t_{\omega_Z}}^{r_1|S_1|-A_{n_1-2;1}}\\
&\times \sum_{\begin{array}{c} \scriptstyle(S_1',S_2')\\\scriptstyle { A_{n_2-2;2} <r_2|S_2'| } \end{array}} 
x_{d;t}^{r_2|S_2'| - A_{n_2-2;2}} 
x_{d';t}^{r_2|S_1'|-A_{n_2-3;2} } 
x_{e;t}^{ \xi_2(A_{n_2-2;2}-|S'_1|) - (\xi_2r_2 -\omega_2 )|S_2'| } \endaligned
$$
\\
\begin{equation}\label{eq28}
\aligned&=\sum_{\begin{array}{c}\scriptstyle(S_1,S_2)\\ \scriptstyle-A_{n_1-1;1}+r_1|S_2|> 0\end{array}}  \sum_{\begin{array}{c} \scriptstyle(S_1',S_2')\\\scriptstyle { A_{n_2-2;2} <r_2|S_2'| } \end{array}}
x_{d;t}^{r_2|S_2'| - A_{n_2-2;2}} 
x_{d';t}^{r_2|S_1'|-A_{n_2-3;2} } \\&\times
x_{e;t}^{ \xi_2(A_{n_2-2;2}-|S'_1|) - (\xi_2r_2 -\omega_2 )|S_2'|+{r_1|S_1|-A_{n_1-2;1}}} 
\endaligned\end{equation}

Since  $A_{n_2-2;2}<r_2|S_2'|$ in this expression, we have 
$A_{n_2-2;2} / r_2|S_2'|<1$ and thus
\begin{equation}\label{eq29}\frac{r_1A_{n_2-2;2}}{c_{n_2-2}^{[r_2]} \,r_2} =
 \frac{r_1|S_2'|}{c_{n_2-2}^{[r_2]} } \frac{\,A_{n_2-2;2}}{r_2|S_2'|}
<
\frac{r_1|S_2'|}{c_{n_2-2}^{[r_2]}} 
\le \xi_2(A_{n_2-2;2}-|S_1'|)-(\xi_2 r_2 - \omega_2)|S_2'|,\end{equation}
where the last inequality is proved in Lemma~\ref{0413lem}. On the other hand, 

\begin{equation}\label{eq30} \aligned
A_{n_2-2;2} &= p_2 c_{n_2-1}^{[r_2]} + q_2 c_{n_2-2}^{[r_2]} \\
&=(r_1|S_2|-A_{n_1-1;1}) c_{n_2-1}^{[r_2]} +    {\xi_1(A_{n_1-1;1}-|S_1|)-(\xi_1r_1-\omega_1)|S_2|-M_{d'}})c_{n_2-2}^{[r_2]} 
\endaligned \end{equation}

The exponent of $x_{e;t}$ is
\[\aligned  &{ \xi_2(A_{n_2-2;2}-|S'_1|) - (\xi_2r_2 -\omega_2 )|S_2'|+{r_1|S_1|-A_{n_1-2;1}}}\\
\stackrel{(\ref{eq29})}{>}&\ \frac{r_1A_{n_2-2;2}}{c_{n_2-2}^{[r_2]} \,r_2} +  r_1|S_1|-A_{n_1-2;1} \\
\stackrel{(\ref{eq30})}{=}&\  \frac{r_1}{c_{n_2-2}^{[r_2]} \,r_2} \left((r_1|S_2|-A_{n_1-1;1}) c_{n_2-1}^{[r_2]} +    ({\xi_1(A_{n_1-1;1}-|S_1|)-(\xi_1r_1-\omega_1)|S_2|-M_{d'}})c_{n_2-2}^{[r_2]} \right)\\ 
& \qquad + r_1|S_1|-A_{n_1-2;1} \\
\stackrel{\textup{Lemma~\ref{negone}}}{=} & \  \frac{r_1}{c_{n_2-2}^{[r_2]} \,r_2} \left((r_1|S_2|-A_{n_1-1;1}) c_{n_2-1}^{[r_2]} -((\xi_1r_1-\omega_1)|S_2|-M_{d'})c_{n_2-2}^{[r_2]} \right) 
 + A_{n_1;1}\\
\stackrel{\textup{Lemma~\ref{lem cn}}}{=}& \  \frac{r_1}{c_{n_2-2}^{[r_2]} \,r_2} \left((\omega_1c_{n_2-2}^{[r_2]} - r_1 c_{n_2-3}^{[r_2]})|S_2| -A_{n_1-1;1} c_{n_2-1}^{[r_2]} - M_{d'}c_{n_2-2}^{[r_2]} \right) + A_{n_1;1} \\
 \stackrel{(*)}{\geq}&\   \frac{r_1}{c_{n_2-2}^{[r_2]} \,r_2} \left((\omega_1c_{n_2-2}^{[r_2]} - r_1 c_{n_2-3}^{[r_2]})\frac{A_{n_1-1;1}}{r_1} -A_{n_1-1;1} c_{n_2-1}^{[r_2]} - M_{d'}c_{n_2-2}^{[r_2]} \right) + A_{n_1;1}\\
= & \ \frac{r_1}{c_{n_2-2}^{[r_2]} \,r_2} \left((\omega_1c_{n_2-2}^{[r_2]} - r_1 c_{n_2-3}^{[r_2]})\frac{A_{n_1-1;1}}{r_1}-A_{n_1-1;1} c_{n_2-1}^{[r_2]}\right. \\
&\left. \ - ( \xi_1A_{n_1-1;1}-(\xi_1r_1-\omega_1)A_{n_1-2;1}  )c_{n_2-2}^{[r_2]} \right) + A_{n_1;1}
\endaligned\]
where inequality (*) holds \textup{because $p_2=r_1|S_2| - A_{n_1-1;1}\ge 0$}.
We shall show below that the quiver $Q_2$ is cyclic, and therefore $\xi_2$ and $r_2$ have the same sign, so $r_2>0$, since $\xi_2>0$. This implies that the above expression has the same sign as 
\[\aligned
&r_1 \left((\omega_1c_{n_2-2}^{[r_2]} - r_1 c_{n_2-3}^{[r_2]})\frac{A_{n_1-1;1}}{r_1}-A_{n_1-1;1} c_{n_2-1}^{[r_2]} - ( \xi_1A_{n_1-1;1}-(\xi_1r_1-\omega_1)A_{n_1-2;1}  )c_{n_2-2}^{[r_2]} \right) +A_{n_1;1}c_{n_2-2}^{[r_2]} \,\xi_1\\
&=(\omega_1c_{n_2-2}^{[r_2]} - r_1 c_{n_2-3}^{[r_2]})A_{n_1-1;1}+ ( r_1(\xi_1r_1-\omega_1)A_{n_1-2;1} -r_1\xi_1A_{n_1-1;1}  +A_{n_1;1} \,\xi_1)c_{n_2-2}^{[r_2]}  - r_1 A_{n_1-1;1} c_{n_2-1}^{[r_2]}\\
&=(\omega_1c_{n_2-2}^{[r_2]} - r_1 c_{n_2-3}^{[r_2]})A_{n_1-1;1}+ (\xi_1(r_1^2-1)-\omega_1\,r_1)A_{n_1-2;1} c_{n_2-2}^{[r_2]}  - r_1 A_{n_1-1;1} c_{n_2-1}^{[r_2]}
\\
&=(\omega_1c_{n_2-2}^{[r_2]} - r_1\xi_1 c_{n_2-2}^{[r_2]})A_{n_1-1;1}+ (\xi_1(r_1^2-1)-\omega_1\,r_1)A_{n_1-2;1} c_{n_2-2}^{[r_2]} \\
\endaligned\]
and this expression has the same sign as 
\begin{equation}\label{20140626eq1}\aligned
&(\omega_1 - r_1\xi_1)A_{n_1-1;1}+ (\xi_1(r_1^2-1)-\omega_1\,r_1)A_{n_1-2;1}  \\
&=\xi_1A_{n_1-4;1}-\omega_1A_{n_1-3;1}.
\endaligned\end{equation}
This expression is positive because of Lemma~\ref{lem nonacyclic}. 

Let us now show that the quiver $Q_2$ is cyclic in this case. 
Since we are in case (c), the mutation sequence from 
$t^{**}$ to $t$ is of almost cyclic type. Moreover this sequence is of length at least 3 and the quiver $Q_2$ is the quiver at the seed $t'$ one step before reaching the seed $t$ in this sequence.
We need to consider conditions (1) and (2) of Definition \ref{def 12}. If condition (2) holds then all the quivers after the second mutation in this sequence are cyclic, so in particular $Q_2$ is cyclic. Suppose now that condition (1) holds. In our situation this condition says 
$c_n^{[r_2]} \xi_2 - c_{n-1}^{[r_2]} \omega_2 >0$ for $1\le n\le m$, where $m$ is the length of the mutation sequence. 
Using $n=1$ and $2$, this implies that $\omega_2>0$ and $\xi_2>0$, and thus $Q_1$ is cyclic. Because we are in case (c), the mutation sequence from $t_{w_E}$ to $t^*$ is of acyclic type, and it follows that there is an acyclic quiver in one of the seeds preceding $Q_1$ in that sequence. 
The facts that $r_1>1$ and that the last mutation to get to $Q_1$ is in direction $e$ then imply that $\xi_1>\omega_1$, because $\xi_1$ is the number of arrows opposite to the vertex $e$. But then the first mutation in the sequence from $Q_1$ to $Q_2$ is in direction $d$ and afterwards the sequence alternates between directions $d'$ and $d$, and therefore the number of arrows in the quivers of this sequence grows. In particular all quivers in that sequence are cyclic.

The proof that the exponents of $x_{e;t}$ in the part of (\ref{0729eq5a}) that contributes to $C'_t$ are non-negative
uses a similar argument, which is given below. 
 
Let $p_3=A_{n_1-1;1} - r_1s_{n_1-2;1}$ and
$q_3=\omega_1s_{n_1-2;1} - \xi_1s_{n_1-3;1} -M'_{d'}$,  and let $A_{i;3}=p_3 c_{i+1}^{[r_2]}+q_3 c_{i}^{[r_2]} $.
Applying Theorem~\ref{mainthm12062011} to $\widetilde{x_{d;t_{\omega_Z}}}^{p_3}
x_{d';t_{\omega_Z}}^{q_3} $ in (\ref{0729eq5a}), the exponents of 
$x_{e;t}$ in the part 
of (\ref{0729eq5a}) that contributes to  $C_t'$ are of the form
\begin{equation}\label{20140625eq1}
\xi_2(A_{n_2-2;3}-|S_{1}'|) - (\xi_2r_2-\omega_2)|S_{2}'|
+r_1s_{n_1-3;1}-A_{n_1-2;1}.
\end{equation}
Using (\ref{eq29}) with $n_2-2$ replaced by $n_2-1$, we have
{\tiny$$\aligned
&\eqref{20140625eq1}
> \frac{r_1A_{n_2-1;3}}{c_{n_2-1}^{[r_2]}r_2}
+r_1s_{n_1-3;1}-A_{n_1-2;1}\\
&=\frac{r_1}{c_{n_2-1}^{[r_2]}\xi_1}
\left(p_3 c_{n_2}^{[r_2]}+q_3 c_{n_2-1}^{[r_2]} \right)+r_1s_{n_1-3;1}-A_{n_1-2;1}\\
&=\frac{r_1}{c_{n_2-1}^{[r_2]}\xi_1}
\left(p_3 c_{n_2}^{[r_2]}+(\omega_1s_{n_1-2;1} - \xi_1s_{n_1-3;1} -M'_{d'}) c_{n_2-1}^{[r_2]} \right)+r_1s_{n_1-3;1}-A_{n_1-2;1}\\
&=\frac{r_1}{c_{n_2-1}^{[r_2]}\xi_1}
\left(p_3 c_{n_2}^{[r_2]}+(\omega_1s_{n_1-2;1} -M'_{d'}) c_{n_2-1}^{[r_2]} \right)-A_{n_1-2;1}\\
&=\frac{r_1}{c_{n_2-1}^{[r_2]}\xi_1}
\left((A_{n_1-1;1} - r_1s_{n_1-2;1}) c_{n_2}^{[r_2]}+(\omega_1s_{n_1-2;1} -M'_{d'}) c_{n_2-1}^{[r_2]} \right)-A_{n_1-2;1}\\
&=\frac{r_1}{c_{n_2-1}^{[r_2]}\xi_1}
\left(A_{n_1-1;1}c_{n_2}^{[r_2]} - s_{n_1-2;1}(r_1c_{n_2}^{[r_2]}-\omega_1c_{n_2-1}^{[r_2]}) -M'_{d'}c_{n_2-1}^{[r_2]}  \right)-A_{n_1-2;1}\\
&\underset{(A_{n_1-1;1} - r_1s_{n_1-2;1}\geq 0)}{>} \ \ 
\frac{r_1}{c_{n_2-1}^{[r_2]}\xi_1}
\left(A_{n_1-1;1}c_{n_2}^{[r_2]} - \frac{A_{n_1-1;1}}{r_1}(r_1c_{n_2}^{[r_2]}-\omega_1c_{n_2-1}^{[r_2]}) -M'_{d'}c_{n_2-1}^{[r_2]}  \right)-A_{n_1-2;1}\\
&=
\frac{r_1}{c_{n_2-1}^{[r_2]}\xi_1}
\left(\frac{A_{n_1-1;1}}{r_1}\omega_1c_{n_2-1}^{[r_2]} -M'_{d'}c_{n_2-1}^{[r_2]}  \right)-A_{n_1-2;1}\\
&=
\frac{r_1}{c_{n_2-1}^{[r_2]}\xi_1}
\left(\frac{A_{n_1-1;1}}{r_1}\omega_1c_{n_2-1}^{[r_2]} -(\omega_1A_{n_1-2;1} - \xi_1A_{n_1-3;1})c_{n_2-1}^{[r_2]}  \right)-A_{n_1-2;1}\\
&=
\frac{\omega_1A_{n_1-1;1}}{\xi_1} -\frac{r_1}{\xi_1}
(\omega_1A_{n_1-2;1} - \xi_1A_{n_1-3;1}) -A_{n_1-2;1}\\
&=
-\frac{\omega_1A_{n_1-3;1}}{\xi_1} +A_{n_1-4;1},\\
\endaligned
$$}
which has the same sign as 
$$
\xi_1A_{n_1-4;1} - \omega_1A_{n_1-3;1},
$$which is equal to \eqref{20140626eq1}.

 Thus the exponents of $x_{e;t}$ in $C_t'$ are non-negative, and this shows Proposition \ref{prop2.13} in the case (c).

To complete the proof of the proposition, we analyze  the case (d).  Suppose that the sequence from $t_{w_B}$ to $t^*$  is of acyclic type and consider the quiver $Q_1$ at the seed $t^{**}=\mu_d (t^*)$. 
Our first goal is to show that $\mu_e Q_1$ is acyclic. 
Suppose the contrary. 
Thus condition (7) in Definition \ref{def 12} does not hold. Hence condition (6) in Definition \ref{def 12}
implies
 that the number of arrows $r_1$ from $d$ to $e$ in $Q_1$ is at least 2.  In this case, the acyclic quivers in the  sequence from $t_{w_B}$ to $t^*$  form a  connected subsequence, and thus $\mu_e Q_1$ being cyclic implies that $Q_1$ and $\mu_d Q_1 $ are cyclic too. 
 The third case of Lemma \ref{lem nonacyclic} with $\omega=\xi_1$ and $ r=\omega_1$ implies that 
 $$\xi_1\ge \frac{c_n^{[r_1]}}{c_{n-1}^{[r_1]}} \,\omega_1 > \omega_1,$$ where the last inequality holds since ${c_n^{[r_1]}}>{c_{n-1}^{[r_1]}} $.
 Moreover, using the third case of Lemma \ref{lem nonacyclic} with $\omega=\omega_1$ and $ r=r_1$ together with the fact that the mutation sequence from $Q_1$ to $Q_2$ is of acyclic type, we also see that $\omega_1\ge r_1$.
 Thus $\xi_1> \omega_1\ge r_1\ge 2$, and therefore Lemma \ref{rem 12} 
 implies that all quivers in the sequence from $t^*$ to $t$ are cyclic, a contradiction to the assumption that we are in case (d).

Thus  $\mu_e Q_1$ is acyclic. By the same reasoning, we get that $Q_1 $ is acyclic, and then by symmetry, we also have that the quivers in the seeds $t^*$ and $\mu_{d'} (t^*)$ are acyclic.

Thus we have shown that the case (d) occurs if and only if the four quivers in the consecutive seeds  $\mu_{e} (t^{**})$, $t^{**}$, $t^*$ and $\mu_{d'} (t^*)$ are acyclic. In this case, the sequence from $t^*$ to $t$ is of almost cyclic type and the result can be shown by the same argument as in the case (c).
This completes the proof of Proposition \ref{prop2.13}.

\subsection{Proof of Proposition \ref{prop2.14}}\label{proof prop2.14}
Proposition \ref{prop2.14} applies in cases (a) and (c). We  assume that we are in the case (a). The case (c) is similar.
 
 Let   $g$ be an arbitrary vertex different from $d,d',e$ and let
$$Q_1=\xymatrix@R20pt@C80pt{ d  \ar@{<-}[r]^{\xi_1}\ar@{<-}[dr]|(0.3){\rho_1}
& d' \ar@{<-}[dl]|(0.3){\omega_1}\ar@{<-}[d]  \\ e \ar@{<-}[u]^{r_1} &g\ar@{<-}[l]^{\nu_1} }$$ 
be the full subquiver with vertices $d,d',e'g$ of the quiver at the seed $\mu_d(t_{w_Z})=t^{**}$, where $r_1$ (respectively $\omega_1$, $\xi_1$, $\nu_1$, $\rho_1$) is the number of arrows from $d$ to $e$ (respectively from $e$ to $d'$, from  $d'$ to $d$, from $e$ to $g$, from $g$ to $d$ ). Recall that $r_1\ge 0$.
Although the mutations in the sequence $\mu$ are in directions $d,d',e$ only, we need to study how the variable $x_{g,t}$ behaves in the expansion formulas in order to show certain divisibility properties. On the other hand, it suffices to consider only one of the $x_{g;t}$ with $g\ne d,d',e$.

We present the case where the subquiver with vertices $d, e, g$ in some seed between $\mu_d({t_{w_E}})$ and $\mu_d(t_{w_Z})$ is acyclic. The case where all these subquivers are non-acyclic is easier; in fact, only the exponent of $x_{g,t_{w_Z}}$ would change.
Thus for the rest of this proof, we set $x_{f;t}=1$ for all $f\ne d,d',e'g$.

Since the mutation sequence relating the seeds $t_{w_E}$ and $t_{w_Z}=t^*$ consists in mutations in directions 
$d$ and $e$,  
applying Theorem \ref{cor 26} to $x_{d;t_{w_E}}^p x_{e;t_{w_E}}^q$ yields\footnote{The term $(\nu_1 A_{n_1-2;1} - \rho_1 A_{n_1-3;1})$ in the exponent of $x_{g;t_{w_Z}}$ is equal to the $M_g$ in Theorem~\ref{cor 26}, and it is non-zero, because of the assumption that the subquiver with vertices $d, e, g$ in some seed between $\mu_d({t_{w_E}})$ and $\mu_d(t_{w_Z})$ is acyclic. On the other hand, the term $M_{d'}$ in the exponent of $x_{d';t_{w_Z}}$ is zero, because we are in case (a).}

\begin{equation}\label{0303eq5}
\aligned
\sum_{\tau_{0;1},\tau_{1;1},\cdots,\tau_{n_1-3;1}}  \!\!\!\! \left( \prod_{w=0}^{n_1-3}\!\gchoose{\!\!A_{w+1;1} - r_1s_{w;1}\!\!}{\tau_{w;1}} \!\!  \right)
 & \widetilde{x_{d;t_{w_Z}}}^ {A_{n_1-1;1}-r_1s_{n_1-2;1}}x_{e;t_{w_Z}}^ {r_1s_{n_1-3;1}-A_{n_1-2;1}   } \\
& \times x_{d';t_{w_Z}}^{\omega_1 s_{n_1-2;1} - \xi_1 s_{n_1-3;1}   } x_{g;t_{w_Z}}^{\nu_1 s_{n_1-2;1} - \rho_1 s_{n_1-3;1} - (\nu_1 A_{n_1-2;1} - \rho_1 A_{n_1-3;1})  }.
\endaligned\end{equation} 

As before, let $n_2$ be the number of seeds between $\mu_d(t_{w_Z})=t^{**}$ and $t$ inclusive.  Recall that $n_2$ is even. Let 
$$Q_2=\xymatrix@R20pt@C80pt{ d  \ar@{<-}[r]^{r_2}
& d'\ar@{<-}[dl]|(0.3){\omega_2} \ar@{<-}[d]^{\nu_2} \\ e \ar@{<-}[r]_{\gamma_2}\ar@{<-}[u]^{\xi_2} &g\ar@{<-}[ul]|(0.7){\rho_2} }$$ 
be the quiver at the seed $\mu_d(t)=t'$, where $r_2$ (respectively $\omega_2$, $\xi_2$, $\nu_2$, $\rho_2$, $\gamma_2$) is the number of arrows from $d'$ to $d$ (respectively from $e$ to $d'$, from  $d$ to $e$, from $g$ to $d'$, from $d$ to $g$, from $g$ to $e$). Recall that $r_2\ge 0$.

 Next we want to use Lemma~\ref{lem nonacyclic} to compute the number of arrows in $Q_2$ in terms of the number of arrows in $Q_1$. The mutation sequence relating the quivers $Q_1$ and $Q_2$ is a rank 2 sequence in directions $d$ and $d'$, starting with $d$ and ending with $d'$. In particular, the number of mutations in this sequence (which is denoted  $n$ in Lemma~\ref{lem nonacyclic}) is even and we have $n=n_2-2$.
First we apply   Lemma  \ref{lem nonacyclic} to the subquiver on vertices $d,d',e$. Since we are in the case (a),  the condition $c_{n_2-1}^{[r_2]}r_1- c_{n_2-2}^{[r_2]}\omega_1>0$ holds.
Therefore we see from  Lemma \ref{lem nonacyclic} that

\begin{eqnarray} \label{eq 4.8.1}
 r_2&=&\xi_1,\nonumber\\
\omega_2&=&c_{n_2-1}^{[r_2]}r_1- c_{n_2-2}^{[r_2]}\omega_1,\\
 \xi_2&=&c_{n_2}^{[r_2]}r_1- c_{n_2-1}^{[r_2]}\nonumber\omega_1,\end{eqnarray}
 
 Next, we use Lemma \ref{lem nonacyclic}  on the subquiver on vertices $d,d' $ and $g$. Since we assumed that the mutation sequence on this subquiver is of acyclic type and because of Remark \ref{Rem acyclic}, we use the third case of  Lemma~\ref{lem nonacyclic} with $\bar{\omega}(n)=-\rho_1$, $\xi=r_2$, $\omega=\nu_2$, and $r=\rho_2$ to obtain
 
 \[ \rho_1=    c_{n_2-3}^{[r_2]}\nu_2 - c_{n_2-4}^{[r_2]}\rho_2. \]   
  Finally, we show by induction on $n_2$ that 
\begin{equation}\label{gamma2}\gamma_2  = ( c_{n_2-2}^{[r_2]}\rho_2 - c_{n_2-1}^{[r_2]}\nu_2 )r_1 -  ( c_{n_2-3}^{[r_2]}\rho_2 - c_{n_2-2}^{[r_2]}\nu_2 )\omega_1 -\nu_1.\end{equation}
If $n_2=4$, then the above formula becomes $\gamma_2  = (\rho_2 - r_2\nu_2 )r_1 -  ( - \nu_2 )\omega_1 -\nu_1$. This is easily checked, since $Q_1$ and $Q_2$ are related by sequence of two mutations. Suppose $n_2>4$. We want to show  that if equation (\ref{gamma2}) computes the number of arrows from $e$ to $g$ in the quiver $Q_2$ then it also computes the number of arrows from $e$ to $g$ in the quiver $\mu_{d'}\mu_d Q_2$, if $n_2$ is replaced by $n_2+2$, and $\rho_2$ (respectively $\nu_2$) is replaced by the number of arrows in $\mu_{d'}\mu_d Q_2$ from $g$ to $d$ (respectively $d'$ to $g$). 

Denoting by $\rho_2'$ (respectively $\nu_2'$)  the number of arrows from $g$ to $d$  (respectively $d'$ to $g$)
in  $\mu_{d'}\mu_d Q_2$, we observe that $\rho_2'=(r_2(r_2\rho_2-\nu_2) - \rho_2)$,
 $\nu_2'=(r_2\rho_2 - \nu_2)$ and the number of arrows from $e$ to $g$ is still $\gamma_2$.

Now using the definition $c_{n-2}^{[r_2]}=r_2c_{n-1}^{[r_2]}-c_{n}^{[r_2]}$ we see that

\[\begin{array}{rcl}c_{n_2-2}^{[r_2]}\rho_2 - c_{n_2-1}^{[r_2]}\nu_2 
&=&(r_2c_{n-1}^{[r_2]}-c_n^{[r_2]}) \rho_2 - (r_2c_{n}^{[r_2]}-c_{n+1}^{[r_2]})\nu_2 
\\
&=&(r_2(r_2c_{n}^{[r_2]}-c_{n+1}^{[r_2]})-c_n^{[r_2]}
) \rho_2 - (r_2c_{n}^{[r_2]}-c_{n+1}^{[r_2]})\nu_2 

\\
&= &c_{n}^{[r_2]}(r_2(r_2\rho_2-\nu_2) - \rho_2) - c_{n+1}^{[r_2]}(r_2\rho_2 - \nu_2)
\\&= &c_{n}^{[r_2]}\rho_2'- c_{n+1}^{[r_2]} \nu_2'
\end{array}\]
Similarly
\[ c_{n_2-3}^{[r_2]}\rho_2 - c_{n_2-2}^{[r_2]}\nu_2 
=  c_{n_2-1}^{[r_2]}\rho_2' - c_{n_2}^{[r_2]}\nu'_2 .
\]
This proves formula (\ref{gamma2}).

Now let 
$p_2 ={A_{n_1-1;1}-r_1s_{n_1-2;1}}$, $q_2={\omega_1 s_{n_1-2;1} - \xi_1 s_{n_1-3;1}   }$ be the exponents of $\widetilde{x_{d;t_{w_Z}}}$ and $x_{d';t_{w_Z}}$ in (\ref{0303eq5}) respectively.
 Applying Theorem~\ref{cor 26} to $\widetilde{x_{d;t_{w_Z}}}^{p_2}x_{d';t_{w_Z}}^{q_2}$,
  we see that (\ref{0303eq5}) is equal to
\begin{equation}\label{0421eq1}
\aligned
&\sum_{\tau_{0;1},\tau_{1;1},\cdots,\tau_{n_1-3;1}}   \left( \prod_{w=0}^{n_{1}-3}\gchoose{A_{w+1;1} - r_1s_{w;1}}{\tau_{w;1}}   \right) x_{e;t}^ {r_1s_{n_1-3;1}-A_{n_1-2;1} } x_{g;t}^{\nu_1 s_{n_1-2;1} - \rho_1 s_{n_1-3;1} - (\nu_1 A_{n_1-2;1} - \rho_1 A_{n_1-3;1})  } \\
&\times \sum_{\tau_{0;2},\tau_{1;2},\cdots,\tau_{n_{2}-3;2}}   \left( \prod_{w=0}^{n_{2}-3}\gchoose{A_{w+1;2} - r_{2}s_{w;2}}{\tau_{w;2}}   \right) \\
&\times  \widetilde{x_{d;t}}^ {A_{n_2-1;2}-r_{2} s_{n_{2}-2;2} }
 x_{d';t}^{ r_2s_{n_2-3;2}-A_{n_2-2;2}} x_{e;t}^{\omega_2s_{n_2-2;2}-\xi_2s_{n_2-3;2}}\\
&\times  x_{g;t}^{\nu_2s_{n_2-2;2}-\rho_2s_{n_2-3;2}-(\nu_2A_{n_2-2;2}-\rho_2A_{n_2-3;2})}\\
  \\
 =
&\sum_{\tau_{0;1},\tau_{1;1},\cdots,\tau_{n_1-3;1}}   \left( \prod_{w=0}^{n_{1}-3}\gchoose{A_{w+1;1} - r_1s_{w;1}}{\tau_{w;1}}   \right)  \\
&\times \sum_{\tau_{0;2},\tau_{1;2},\cdots,\tau_{n_{2}-3;2}}   \left( \prod_{w=0}^{n_{2}-3}\gchoose{A_{w+1;2} - r_{2}s_{w;2}}{\tau_{w;2}}   \right) \\
&\times  \widetilde{x_{d;t}}^ {A_{n_2-1;2}-r_{2} s_{n_{2}-2;2} }
 x_{d';t}^{ r_2s_{n_2-3;2}-A_{n_2-2;2}}
 x_{e;t}^{\omega_2s_{n_2-2;2}-\xi_2s_{n_2-3;2}+r_1s_{n_1-3;1}-A_{n_1-2;1} }  \\
 &\times  x_{g;t}^{\nu_2s_{n_2-2;2}-\rho_2s_{n_2-3;2}-(\nu_2A_{n_2-2;2}-\rho_2A_{n_2-3;2}) + \nu_1 s_{n_1-2;1} - \rho_1 s_{n_1-3;1}- (\nu_1 A_{n_1-2;1} - \rho_1 A_{n_1-3;1})}
 \endaligned\end{equation}
where $A_{i;2}$ and $s_{i;2}$ are as defined before Lemma~\ref{negone} and Lemma \ref{lem sn} but in terms of $p_2, q_2,$ and $r_2$, thus $A_{i;2}=p_2c_{i+1}^{[r_2]}+q_2c_i^{[r_2]}$ and $s_{i;2}=\sum_{j=0}^{i-1}c_{i-j+1}^{[r_2] }\tau_{j;2}$. 

Let $\theta$ be a positive integer, and let $P_\theta$ be the sum of all the terms in the sum above for which  the exponent of $x_{e;t}$ is equal to $-\theta$. Thus $-\theta $ is equal to
\[{\omega_2s_{n_2-2;2}-\xi_2s_{n_2-3;2}+r_1s_{n_1-3;1}-A_{n_1-2;1} }.
\]

\subsubsection{Computation of exponents}\label{sect exponents} In this subsubsection, we compute the exponents in the expression in (\ref{0421eq1}). The main computation continues in \ref{sect back to main}.
It is convenient to introduce $\varsigma$ such that $\tau_{0;2}=\varsigma -s_{n_1-3;1}$. Then 
$$\aligned 
s_{n_2-2;2}&=c_{n_2-1}^{[r_2]} (\varsigma -s_{n_1-3;1})+ \sum_{j=1}^{n_2-3} c_{n_2-1-j}^{[r_2]} \tau_{j;2},\text{ and }\\
s_{n_2-3;2}&=c_{n_2-2}^{[r_2]} (\varsigma -s_{n_1-3;1})+ \sum_{j=1}^{n_2-4} c_{n_2-2-j}^{[r_2]}\tau_{j;2}.
\endaligned$$

Using equation (\ref{eq 4.8.1}), the expressions for $s_{n_2-2;2}$ and $s_{n_2-3;2}$ and the fact that $c_1^{[\xi]}=0$, we have

\begin{equation*}
\aligned & \omega_2s_{n_2-2;2}-\xi_2s_{n_2-3;2 } \\
=&\ (c_{n_2-1}^{[\xi_1]}r_1-c_{n_2-2}^{[\xi_1]} \omega_1)
\left[c_{n_2-1}^{[\xi_1]}(\varsigma -s_{n_1-3;1}) +  \sum_{j=1}^{n_2-3} c_{n_2-1-j}^{[\xi_1]}\tau_{j;2} \right] \\
&-(c_{n_2}^{[\xi_1]}r_1-c_{n_2-1}^{[\xi_1]} \omega_1)
\left[c_{n_2-2}^{[\xi_1]}(\varsigma -s_{n_1-3;1}) +  \left(\sum_{j=1}^{n_2-3} c_{n_2-2-j}^{[\xi_1]}\tau_{j;2}\right) - c_1^{[\xi_1]}\tau_{n_2-3;2}\right] \\
=&\ (\varsigma -s_{n_1-3;1})\,r_1\left( (c_{n_2-1}^{[\xi_1]})^2-c_{n_2}^{[\xi_1]}c_{n_2-2}^{[\xi_1]}\right) \\
&+\sum_{j=1}^{n_2-3} \tau_{j;2} \left[ r_1\left(c_{n_2-1}^{[\xi_1]}c_{n_2-1-j}^{[\xi_1]} -c_{n_2}^{[\xi_1]}c_{n_2-2-j}^{[\xi_1]}\right)
+\omega_1\left(-c_{n_2-2}^{[\xi_1]}c_{n_2-1-j}^{[\xi_1]}+c_{n_2-1}^{[\xi_1]}c_{n_2-2-j}^{[\xi_1]}
\right)
\right] \\
{=}
&\ (\varsigma -s_{n_1-3;1})\,r_1
+\sum_{j=1}^{n_2-3} \tau_{j;2} \left[ r_1\left(-c_{-j}^{[\xi_1]}\right)
+\omega_1c_{1-j}^{[\xi_1]}\right] \qquad \textup{(by Lemma \ref{lem cn})}
\\
\endaligned
\end{equation*}
And since $-c_{-j}^{[\xi_1]}=c_{j+2}^{[\xi_1]}$, we get

\begin{equation}\label{theta}\aligned-\theta 
&=r_1(\varsigma -s_{n_1-3;1}) + \sum_{j=1}^{n_2-3} \tau_{j;2}(c_{j+2}^{[\xi_1]}r_1 - c_{j+1}^{[\xi_1]}\omega_1)+ r_1s_{n_1-3;1}-A_{n_1-2;1} \\
&=-A_{n_1-2;1} +r_1\varsigma + \sum_{j=1}^{n_2-3} \tau_{j;2}(c_{j+2}^{[\xi_1]}r_1 - c_{j+1}^{[\xi_1]}\omega_1).
\endaligned\end{equation}
Also, the exponents of $\widetilde{x_{d;t}}$ and  $x_{d';t}$ in (\ref{0421eq1}) can be expressed as follows:
$$\aligned
A_{n_2-1;2}-r_{2} s_{n_{2}-2;2} = \ & c_{n_2}^{[\xi_1]}p_2 + c_{n_2-1}^{[\xi_1]}q_2
- \xi_1\left(c_{n_2-1}^{[\xi_1]} (\varsigma -s_{n_1-3;1})+ \sum_{j=1}^{n_2-3} c_{n_2-1-j}^{[\xi_1]} \tau_{j;2}\right)\\
= \ & c_{n_2}^{[\xi_1]}(A_{n_1-1;1}-r_1s_{n_1-2;1} ) + c_{n_2-1}^{[\xi_1]}(\omega_1 s_{n_1-2;1} - \xi_1 s_{n_1-3;1}) \\ 
& - \xi_1\left(c_{n_2-1}^{[\xi_1]} (\varsigma -s_{n_1-3;1})+ \sum_{j=1}^{n_2-3} c_{n_2-1-j}^{[\xi_1]} \tau_{j;2}\right)\\
 =\ & c_{n_2}^{[\xi_1]}(A_{n_1-1;1}-r_1s_{n_1-2;1} ) + c_{n_2-1}^{[\xi_1]}\omega_1 s_{n_1-2;1}\\ 
&- \xi_1\left(c_{n_2-1}^{[\xi_1]} \varsigma+ \sum_{j=1}^{n_2-3} c_{n_2-1-j}^{[\xi_1]} \tau_{j;2}\right),\endaligned$$
and similarly
\[\aligned \xi_1s_{n_2-3;2}-A_{n_2-2;2} 
= \ & \xi_1\left(c_{n_2-2}^{[\xi_1]} \varsigma + \sum_{j=1}^{n_2-4} c_{n_2-2-j}^{[\xi_1]}\tau_{j;2}\right) \\
& - \left( c_{n_2-1}^{[\xi_1]}(A_{n_1-1;1}-r_1s_{n_1-2;1} ) + c_{n_2-2}^{[\xi_1]}\omega_1 s_{n_1-2;1} \right).
\endaligned\]

Recall that $\rho_1=c_{n_2-3}^{[r_2]}\nu_2-c_{n_2-4}^{[r_2]}\rho_2$. The exponent of $x_{g;t}$ in (\ref{0421eq1}) is equal to

\[\aligned & \nu_2s_{n_2-2;2}-\rho_2s_{n_2-3;2}-(\nu_2A_{n_2-2;2}-\rho_2A_{n_2-3;2}) + \nu_1 s_{n_1-2;1} - \rho_1 s_{n_1-3;1}- (\nu_1 A_{n_1-2;1} - \rho_1 A_{n_1-3;1})\\
= \ & \nu_2\left(c_{n_2-1}^{[\xi_1]} (\varsigma -s_{n_1-3;1})+ \sum_{j=1}^{n_2-3} c_{n_2-1-j}^{[\xi_1]} \tau_{j;2}\right) \\
& -\rho_2 \left(c_{n_2-2}^{[\xi_1]} (\varsigma -s_{n_1-3;1})+ \sum_{j=1}^{n_2-4} c_{n_2-2-j}^{[\xi_1]}\tau_{j;2} \right)\\
& -\nu_2\left( c_{n_2-1}^{[\xi_1]}(A_{n_1-1;1}-r_1s_{n_1-2;1} ) + c_{n_2-2}^{[\xi_1]}(\omega_1 s_{n_1-2;1} - \xi_1 s_{n_1-3;1})\right)\\
& +\rho_2\left(  c_{n_2-2}^{[\xi_1]}(A_{n_1-1;1}-r_1s_{n_1-2;1} ) + c_{n_2-3}^{[\xi_1]}(\omega_1 s_{n_1-2;1} - \xi_1 s_{n_1-3;1}) \right)\\
& + \nu_1 s_{n_1-2;1} - \rho_1 s_{n_1-3;1}\\
& - (\nu_1 A_{n_1-2;1} - \rho_1 A_{n_1-3;1})\\
= \ & -\gamma_2s_{n_1-2;1} + (\nu_2 c_{n_2-1}^{[\xi_1]} -\rho_2 c_{n_2-2}^{[\xi_1]}) (\varsigma-A_{n_1-1;1}) + \nu_2\sum_{j=1}^{n_2-3} c_{n_2-1-j}^{[\xi_1]} \tau_{j;2} -\rho_2 \sum_{j=1}^{n_2-4} c_{n_2-2-j}^{[\xi_1]}\tau_{j;2}\\
& - (\nu_1 A_{n_1-2;1} - \rho_1 A_{n_1-3;1}). 
\endaligned\]

\subsubsection{Back to main computation} \label{sect back to main}
Using the computations from  \ref{sect exponents} and  fixing $\varsigma,\tau_{1;2},\cdots,\tau_{n_{2}-3;2}$ in (\ref{0421eq1}), we obtain  
$$\aligned  (\ref{0421eq1}) =
&\sum_{\tau_{0;1},\tau_{1;1},\cdots,\tau_{n_1-3;1}}   \left( \prod_{w=0}^{n_{1}-3}\gchoose{A_{w+1;1} - r_1s_{w;1}}{\tau_{w;1}}   \right)  \left( \prod_{w=0}^{n_{2}-3}\gchoose{A_{w+1;2} - r_{2}s_{w;2}}{\tau_{w;2}}   \right) \\
&\times  \widetilde{x_{d;t}}^ {c_{n_2}^{[\xi_1]}(A_{n_1-1;1}-r_1s_{n_1-2;1} ) + c_{n_2-1}^{[\xi_1]}\omega_1 s_{n_1-2;1}- \xi_1\left(c_{n_2-1}^{[\xi_1]} \varsigma+ \sum_{j=1}^{n_2-3} c_{n_2-1-j}^{[\xi_1]} \tau_{j;2}\right) }\\
&\times   x_{d';t}^{ \xi_1\left(c_{n_2-2}^{[\xi_1]} \varsigma + \sum_{j=1}^{n_2-4} c_{n_2-2-j}^{[\xi_1]}\tau_{j;2}\right) - \left( c_{n_2-1}^{[\xi_1]}(A_{n_1-1;1}-r_1s_{n_1-2;1} ) + c_{n_2-2}^{[\xi_1]}\omega_1 s_{n_1-2;1} \right)} \\
&\times   x_{e;t}^{-A_{n_1-2;1} +r_1\varsigma + \sum_{j=1}^{n_2-3} (c_{j+2}^{[\xi_1]}r_1 - c_{j+1}^{[\xi_1]}\omega_1)\tau_{j;2} }  \\
&\times   x_{g;t}^{ -\gamma_2s_{n_1-2;1} + (\nu_2 c_{n_2-1}^{[\xi_1]} -\rho_2 c_{n_2-2}^{[\xi_1]}) (\varsigma-A_{n_1-1;1}) + \nu_2\sum_{j=1}^{n_2-3} c_{n_2-1-j}^{[\xi_1]} \tau_{j;2} -\rho_2 \sum_{j=1}^{n_2-4} c_{n_2-2-j}^{[\xi_1]}\tau_{j;2}- (\nu_1 A_{n_1-2;1} - \rho_1 A_{n_1-3;1})}
 \endaligned$$
 Now we collect all powers involving $s_{n_1-2;1}$ and write  (\ref{0421eq1}) as a product $\phi \varphi$ where $\phi$ is a Laurent monomial in $ \widetilde{x_{d;t}}, x_{d';t},x_{e;t},x_{g;t}$ which in the expression of Proposition \ref{prop2.14} is absorbed either in the first summation, if the  exponent of $x_{e;t}$ is positive, or in the second summation inside $x_{e;t'}^{-\theta}\mathfrak{r}$ if the exponent of $x_{e;t}$ is negative. On the other hand, $\varphi$ is equal to 
\begin{equation*}\aligned
\sum_{\tau_{0;1},\tau_{1;1},\cdots,\tau_{n_1-3;1}}   \left( \prod_{w=0}^{n_{1}-3}\gchoose{A_{w+1;1} - r_1s_{w;1}}{\tau_{w;1}}   \right)   \left( \prod_{w=0}^{n_{2}-3}\gchoose{A_{w+1;2} - r_{2}s_{w;2}}{\tau_{w;2}}   \right) \\
\times \left(\frac{\widetilde{x_{d;t}}^{c_{n_2}^{[\xi_1]}r_1- c_{n_2-1}^{[\xi_1]}\omega_1} x_{g;t}^{\gamma_2}  }{ x_{d';t}^{c_{n_2-1}^{[\xi_1]}r_1- c_{n_2-2}^{[\xi_1]}\omega_1}}\right)^{ \left\lfloor (A_{n_1-1;1}-\varsigma) \frac{A_{n_1-1;1}}{A_{n_1;1}}\right\rfloor -s_{n_1-2;1}}.
\endaligned
\end{equation*}
Note that the 0-th term of the second product can be identified with an $(n_1-2)$-nd term in the first product, since $A_{1;2}= A_{n_1-1;1}-r_1s_{n_1-2;1}$ by the definition of $A_{w;2}$ right after equation (\ref{0303eq5d}) and $s_{0,2}=0$. The above expression is therefore equal to 
\begin{equation}\label{eq goal2}\aligned
\sum_{\tau_{0;1},\tau_{1;1},\cdots,\tau_{n_1-3;1}}   \left( \prod_{w=0}^{n_{1}-2}\gchoose{A_{w+1;1} - r_1s_{w;1}}{\tau_{w;1}}   \right)   \left( \prod_{w=1}^{n_{2}-3}{A_{w+1;2} - r_{2}s_{w;2}\choose\tau_{w;2}}   \right) 
\\ 
  \times \left(\frac{\widetilde{x_{d;t}}^{c_{n_2}^{[\xi_1]}r_1- c_{n_2-1}^{[\xi_1]}\omega_1} x_{g;t}^{\gamma_2} }{ x_{d';t}^{c_{n_2-1}^{[\xi_1]}r_1- c_{n_2-2}^{[\xi_1]}\omega_1}}\right)^{ \left\lfloor (A_{n_1-1;1}-\varsigma) \frac{A_{n_1-1;1}}{A_{n_1;1}}\right\rfloor -s_{n_1-2;1}},
 \endaligned
 \end{equation}
where $\tau_{n_1-2;1}=A_{n_1-1;1}-r_1s_{n_1-2;1}-\tau_{0;2}=A_{n_1-1;1}-r_1s_{n_1-2;1}-\varsigma +s_{n_1-3;1}$. The exponent ${ \left\lfloor (A_{n_1-1;1}-\varsigma) \frac{A_{n_1-1;1}}{A_{n_1;1}}\right\rfloor -s_{n_1-2;1}}$
is non-negative by Lemma~\ref{0302lem1} below. 

Now we show that the Laurent monomial $\phi$ has non-negative degree on $\widetilde{x_{d;t}}$. 
We want to show that
\begin{equation}\label{expphi}
c^{[\xi_1]}_{n_2}A_{n_1-1;1} - \xi_1 \left( c^{[\xi_1]}_{n_2-1} \varsigma  + \sum_{j=1}^{n_2-3} c^{[\xi_1]}_{n_2-1-j}\tau_{j;2} \right)  - (c^{[\xi_1]}_{n_2} r_1 - c^{[\xi_1]}_{n_2-1} \omega_1)(A_{n_1-1;1} -  \varsigma)\frac{A_{n_1-1;1}}{A_{n_1;1}}\ge0.\end{equation}

The   inequality (\ref{eq page 33}) yields
$$
(c^{[\xi_1]}_{j+2} r_1 - c^{[\xi_1]}_{j+1} \omega_1)\frac{A_{n_1-1;1}}{A_{n_1-2;1}}\left( c^{[\xi_1]}_{n_2} - (c^{[\xi_1]}_{n_2} r_1 - c^{[\xi_1]}_{n_2-1} \omega_1)\frac{A_{n_1-1;1}}{A_{n_1;1}}  \right)\tau_{j;2}
> \xi_1 c^{[\xi_1]}_{n_2-1-j}\tau_{j;2}
$$
so 
$$
\sum_{j=1}^{n_2-3}    (c^{[\xi_1]}_{j+2} r_1 - c^{[\xi_1]}_{j+1} \omega_1)\frac{A_{n_1-1;1}}{A_{n_1-2;1}}\left( c^{[\xi_1]}_{n_2} - (c^{[\xi_1]}_{n_2} r_1 - c^{[\xi_1]}_{n_2-1} \omega_1)\frac{A_{n_1-1;1}}{A_{n_1;1}}  \right)\tau_{j;2}
> \sum_{j=1}^{n_2-3}     \xi_1 c^{[\xi_1]}_{n_2-1-j}\tau_{j;2}.
$$
Thanks to $\theta>0$ and   equation (\ref{theta}), this implies that
\begin{equation}\label{20130523eq5}
 (A_{n_1-2;1} - r_1 \varsigma)\frac{A_{n_1-1;1}}{A_{n_1-2;1}}\left( c^{[\xi_1]}_{n_2} - (c^{[\xi_1]}_{n_2} r_1 - c^{[\xi_1]}_{n_2-1} \omega_1)\frac{A_{n_1-1;1}}{A_{n_1;1}}  \right)
> \sum_j     \xi_1 c^{[\xi_1]}_{n_2-1-j}\tau_{j;2}.
\end{equation}
Then to prove (\ref{expphi}) it is enough to show that 
$$
c^{[\xi_1]}_{n_2}A_{n_1-1;1} - \xi_1 c^{[\xi_1]}_{n_2-1} \varsigma - (c^{[\xi_1]}_{n_2} r_1 - c^{[\xi_1]}_{n_2-1} \omega_1)(A_{n_1-1;1} -  \varsigma)\frac{A_{n_1-1;1}}{A_{n_1;1}}
$$ is greater than the left hand side of \eqref{20130523eq5}. The terms without $\varsigma$ are all canceled. Hence it remains to show that
$$
r_1 \frac{A_{n_1-1;1}}{A_{n_1-2;1}}\left( c^{[\xi_1]}_{n_2} - (c^{[\xi_1]}_{n_2} r_1 - c^{[\xi_1]}_{n_2-1} \omega_1)\frac{A_{n_1-1;1}}{A_{n_1;1}}  \right)
>
 \xi_1 c^{[\xi_1]}_{n_2-1}  - (c^{[\xi_1]}_{n_2} r_1 - c^{[\xi_1]}_{n_2-1} \omega_1) \frac{A_{n_1-1;1}}{A_{n_1;1}}
$$  
$$
\Longleftrightarrow \ \ \ \ \ \ 
r_1 c^{[\xi_1]}_{n_2}  \frac{A_{n_1-1;1}}{A_{n_1-2;1}} -  \xi_1 c^{[\xi_1]}_{n_2-1} > (c^{[\xi_1]}_{n_2} r_1 - c^{[\xi_1]}_{n_2-1} \omega_1)\left(  r_1 \frac{A_{n_1-1;1}}{A_{n_1-2;1}} - 1    \right)\frac{A_{n_1-1;1}}{A_{n_1;1}}
$$
$$
\Longleftrightarrow \ \ \ \ \ \ 
r_1 c^{[\xi_1]}_{n_2}  \frac{A_{n_1-1;1}}{A_{n_1-2;1}} -  \xi_1 c^{[\xi_1]}_{n_2-1} > (c^{[\xi_1]}_{n_2} r_1 - c^{[\xi_1]}_{n_2-1} \omega_1)\left(   \frac{r_1A_{n_1-1;1} - A_{n_1-2;1}}{A_{n_1-2;1}}     \right)\frac{A_{n_1-1;1}}{A_{n_1;1}}
$$
$$
\stackrel{\textup{Lemma}~\ref{negone}}{\Longleftrightarrow} \ \ \ \ \ \ 
r_1 c^{[\xi_1]}_{n_2}  \frac{A_{n_1-1;1}}{A_{n_1-2;1}} -  \xi_1 c^{[\xi_1]}_{n_2-1} > (c^{[\xi_1]}_{n_2} r_1 - c^{[\xi_1]}_{n_2-1} \omega_1)\frac{A_{n_1-1;1}}{A_{n_1-2;1}}
$$
$$
\Longleftrightarrow \ \ \ \ \ \ 
\omega_1 \frac{A_{n_1-1;1}}{A_{n_1-2;1}} -  \xi_1 > 0.$$The last inequality follows from the first case of Lemma \ref{lem nonacyclic}. This shows (\ref{expphi}) and thus the exponent of $\widetilde{x_{d;t}}$ in $\phi$ is non-negative.

Now Proposition~\ref{prop2.14} follows from  the following Lemma.

 \begin{lemma}\label{0303lem945}
$$
 \prod_{w=1}^{n_{2}-3}{{A_{w+1;2} - r_{2}s_{w;2}}\choose{\tau_{w;2}}}
=
\sum_{i=0}^{\sum_{w=1}^{n_2-3}\tau_{w;2} } d_i {\left\lfloor (A_{n_1-1;1}-\varsigma) \frac{A_{n_1-1;1}}{A_{n_1;1}}\right\rfloor-s_{n_1-2;1}\choose i}
$$ for some $d_i\in \mathbb{N}$, which are independent of $s_{n_1-2;1}$.
\end{lemma}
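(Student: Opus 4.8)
The plan is to treat the asserted equation as an identity of polynomials in the single variable $s:=s_{n_1-2;1}$ and to show that its coefficients in the basis $\{\binom{L-s}{i}\}_i$, with $L:=\lfloor (A_{n_1-1;1}-\varsigma)\tfrac{A_{n_1-1;1}}{A_{n_1;1}}\rfloor$, are nonnegative integers. First I isolate the dependence on $s$. Keeping $\varsigma$, $\tau_{1;2},\dots,\tau_{n_2-3;2}$ and $\tau_{0;1},\dots,\tau_{n_1-4;1}$ fixed and varying only $\tau_{n_1-3;1}$, Lemma~\ref{lem sn} shows that $s$ varies while $s_{n_1-3;1}$, hence $\tau_{0;2}=\varsigma-s_{n_1-3;1}$ and every $s_{w;2}$ with $w\ge 1$, stays fixed. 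Substituting $p_2=A_{n_1-1;1}-r_1s$ and $q_2=\omega_1 s-\xi_1 s_{n_1-3;1}$ into $A_{w+1;2}=p_2c_{w+2}^{[r_2]}+q_2c_{w+1}^{[r_2]}$ and using $r_2=\xi_1$, each top becomes
\[
A_{w+1;2}-r_2s_{w;2}=\alpha_w-\beta_w\,s,\qquad \beta_w:=r_1c_{w+2}^{[r_2]}-\omega_1c_{w+1}^{[r_2]},
\]
with $\alpha_w=A_{n_1-1;1}c_{w+2}^{[r_2]}-\xi_1c_{w+1}^{[r_2]}\varsigma-\xi_1\sum_{j=1}^{w-1}c_{w-j+1}^{[r_2]}\tau_{j;2}$ a constant in $s$. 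Thus the left-hand side is the polynomial $P(s)=\prod_{w=1}^{n_2-3}\binom{\alpha_w-\beta_w s}{\tau_{w;2}}$, and the coefficients $d_i$, being the coordinates of $P$ in a fixed basis, are automatically independent of $s$; only their nonnegativity and integrality require work.

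Next I pass to the variable $M:=L-s$ and expand each factor in the basis $\{\binom{M}{\ell}\}_\ell$. Writing $\alpha_w-\beta_w s=X_w+\beta_w M$ with $X_w:=\alpha_w-\beta_w L$, the Chu--Vandermonde identity gives $\binom{X_w+\beta_w M}{\tau_{w;2}}=\sum_{j=0}^{\tau_{w;2}}\binom{X_w}{\tau_{w;2}-j}\binom{\beta_w M}{j}$, and the \emph{box-counting} expansion $\binom{\beta_w M}{j}=\sum_\ell c_{j,\ell}\binom{M}{\ell}$ (choosing $j$ balls from $M$ boxes of $\beta_w$ balls each, sorted by the number $\ell$ of occupied boxes) has nonnegative integer coefficients $c_{j,\ell}$ whenever $\beta_w\ge 0$. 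Since $\beta_w=c_{w+2}^{[r_2]}r_1-c_{w+1}^{[r_2]}\omega_1$ is, by Lemma~\ref{lem nonacyclic} and the case~(a) hypothesis, the number of arrows in one of the (cyclic) intermediate quivers of the almost cyclic sequence between $Q_1$ and $Q_2$, we indeed have $\beta_w\ge 0$. Granting for the moment that each intercept $X_w\ge 0$, every factor $\binom{X_w+\beta_w M}{\tau_{w;2}}$ is a nonnegative-integer combination of the $\binom{M}{\ell}$, and multiplying the $n_2-3$ factors using $\binom{M}{a}\binom{M}{b}=\sum_u\binom{M}{u}\binom{u}{a+b-u}\binom{2u-a-b}{u-a}$ (which counts ordered pairs of subsets of $[M]$ by the size $u$ of their union, hence has nonnegative integer coefficients) collapses $P(s)$ into $\sum_i d_i\binom{M}{i}=\sum_i d_i\binom{L-s}{i}$ with $d_i\in\mathbb{N}$, which is the assertion of the Lemma.

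The one input still to be justified, and the heart of the matter, is the family of inequalities $X_w=\alpha_w-\beta_w L\ge 0$ for $1\le w\le n_2-3$. Since $\beta_w\ge 0$ and $L\le (A_{n_1-1;1}-\varsigma)\tfrac{A_{n_1-1;1}}{A_{n_1;1}}$, it suffices, after clearing $A_{n_1;1}$, to prove
\[
c_{w+2}^{[r_2]}A_{n_1-1;1}A_{n_1;1}-\xi_1\varsigma\,c_{w+1}^{[r_2]}A_{n_1;1}-(A_{n_1-1;1}-\varsigma)A_{n_1-1;1}\beta_w\ \ge\ \xi_1 A_{n_1;1}\sum_{j=1}^{w-1}c_{w-j+1}^{[r_2]}\tau_{j;2}.
\]
This is of exactly the type established for the exponent of $\widetilde{x_{d;t}}$ in $\phi$, inequality (\ref{expphi}): I would bound each term $\xi_1 c_{w-j+1}^{[r_2]}\tau_{j;2}$ on the right by the corresponding positive contribution on the left using the per-index estimate (\ref{eq page 33}), sum over $j$ as in (\ref{20130523eq5}) (here the constraint $\theta>0$ of (\ref{theta}) supplies the upper bound on $\sum_j c_{w-j+1}^{[r_2]}\tau_{j;2}$), and apply the recursions $A_{i;2}=r_2A_{i-1;2}-A_{i-2;2}$ together with Lemma~\ref{lem cn} and Lemma~\ref{negone}. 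The chain reduces, as in the computation preceding (\ref{eq 44.1}), to the single positivity $\omega_1A_{n_1-1;1}>\xi_1A_{n_1-2;1}$, valid in case~(a) by (\ref{eq 44.1}) and the first case of Lemma~\ref{lem nonacyclic}. The main obstacle is thus the uniform verification of $X_w\ge 0$ across all factors $w$ (the largest $w$ being the hardest, exactly as in (\ref{expphi}), the smaller ones following a fortiori); everything else is the formal nonnegative binomial bookkeeping above. Case~(c) is identical after interchanging the two mutation directions.
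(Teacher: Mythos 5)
Your main line of attack is the same as the paper's: you write each binomial top as $A_{w+1;2}-r_2s_{w;2}=\beta_w\bigl(\lfloor (A_{n_1-1;1}-\varsigma)\tfrac{A_{n_1-1;1}}{A_{n_1;1}}\rfloor-s_{n_1-2;1}\bigr)+X_w$ with $\beta_w=c_{w+2}^{[\xi_1]}r_1-c_{w+1}^{[\xi_1]}\omega_1\ge 0$ an arrow count via Lemma~\ref{lem nonacyclic}, reduce everything to the nonnegativity of the intercept $X_w$ (the paper's $C(w)$), and then invoke nonnegative binomial re-expansions --- your Chu--Vandermonde/box-counting step and your product identity are exactly Lemmas~\ref{0423lem1} and~\ref{0423lem2}, just re-derived. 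Your treatment of $X_w\ge 0$ also follows the paper's: use the constraint $\theta>0$ from (\ref{theta}) together with the per-index estimates, ultimately resting on $\omega_1A_{n_1-1;1}>\xi_1A_{n_1-2;1}$ from (\ref{eq 44.1}). Two caveats on that last step: first, the reduction "to the single positivity $\omega_1A_{n_1-1;1}>\xi_1A_{n_1-2;1}$" is only accurate once (\ref{eq page 33 bis}) and (\ref{eq page 33}) are granted; in the paper those two inequalities are themselves the bulk of the work (a multi-page case analysis using Lemmas~\ref{10242013lem2} and~\ref{10242013lem1}, limits of the ratios $c_{w+1}^{[\xi_1]}/c_w^{[\xi_1]}$, and casework on small values of $\xi_1,\omega_1,r_1$), and they do \emph{not} follow from (\ref{eq 44.1}) alone. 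You correctly flag this as the heart of the matter, but you substantially understate what remains to be done there.

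The genuine gap is the degenerate case in which one of $\xi_1, r_1, \omega_1$ is at most $1$. The inequalities (\ref{eq page 33 bis}) and (\ref{eq page 33}) are only asserted and proved in the paper under the hypothesis $\xi_1,r_1,\omega_1\ge 2$; when, say, $\xi_1\le 1$ the sequence $c_n^{[\xi_1]}$ is periodic and takes the value $0$ and negative values, so the ratio arguments and the sign claims for $\beta_w$ and $X_w$ break down, and your chain of estimates is simply unavailable. The paper handles this case by an entirely different mechanism: it observes that the identity and the divisibility statements hold without any positivity claim, deduces that $x_{d;t_{w_E}}^px_{e;t_{w_E}}^q$ is a linear combination of the rank-2 cluster monomials in $x_{d;t},\widetilde{x_{d;t}},x_{e;t},\widetilde{x_{e;t}},\widetilde{\widetilde{x_{e;t}}}$, and then gets nonnegativity of the coefficients (hence $d_i\in\mathbb{N}$) from the Sherman--Zelevinsky positivity result \cite[Eq.~(5.10)]{SZ}, using that when $\min(\xi_1,r_1,\omega_1)\le 1$ the rank-3 subquiver on $d,d',e$ is mutation-acyclic. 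Without this separate branch your proof does not cover all instances of the Lemma.
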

\begin{proof}
First suppose that one of $\xi_1, r_1, \omega_1 $ is at most one. The equation in Lemma 5.8 without the requirement that $d_i\in\mathbb{N}$  is always true in any case (a),(b),(c),(d).       
     The divisibility in Lemma 5.14 without the positivity statement is always true, so the argument in subsection \ref{sect 5.3} shows that Proposition 5.5 without the positivity statement is always true.
     Therefore $x_{d;t_{\omega_E}}^p x_{e;t_{\omega_E}}^q$ is a linear combination of the following rank 2 cluster monomials:
$
\widetilde {x_{d;t}}^{p'}  \widetilde{\widetilde {x_{e;t}}}^{q'}
,
\widetilde {x_{d;t}}^{p'}  {{x_{e;t}}}^{q'}
,
{x_{d;t}}^{p'}    {{x_{e;t}}}^{q'}
,
{x_{d;t}}^{p'}   \widetilde{x_{e;t}}^{q'}.
$
Now if one of $\xi_1, r_1, \omega_1 $ is at most one, then some quiver which is mutation equivalent to the subquiver with vertices $d, d', e$ is acyclic \cite{LS3}, and
it follows from \cite[Equation (5.10)]{SZ} that the coefficients of the linear combination are non-negative. 
This implies that $d_i\in\mathbb{N}$.
  
 In what follows, we assume that $\xi_1, r_1, \omega_1 \ge 2.$

Once we know that there are nonnegative integers $a$ and $b$ such that $$A_{w+1;2}-r_{2} s_{w;2}=a\left( \left\lfloor (A_{n_1-1;1}-\varsigma) \frac{A_{n_1-1;1}}{A_{n_1;1}}\right\rfloor-s_{n_1-2;1}\right) +b,$$ then it is clear, by Lemma~\ref{0423lem1} below, that
$$
{{A_{w+1;2} - r_{2}s_{w;2}}\choose{\tau_{w;2}}} =
\sum_{i=0}^{\tau_{w;2}} d_i' {\left\lfloor (A_{n_1-1;1}-\varsigma) \frac{A_{n_1-1;1}}{A_{n_1;1}}\right\rfloor-s_{n_1-2;1}\choose i}
$$ for some $d_i'\in \mathbb{N}$, and by Lemma~\ref{0423lem2} below, for any nonnegative integers $j$ and $k$,
$$\aligned
&{\left\lfloor (A_{n_1-1;1}-\varsigma) \frac{A_{n_1-1;1}}{A_{n_1;1}}\right\rfloor-s_{n_1-2;1}\choose j}
{\left\lfloor (A_{n_1-1;1}-\varsigma) \frac{A_{n_1-1;1}}{A_{n_1;1}}\right\rfloor-s_{n_1-2;1}\choose k}\\
&=\sum_{i=0}^{j+k} d_i'' {\left\lfloor (A_{n_1-1;1}-\varsigma) \frac{A_{n_1-1;1}}{A_{n_1;1}}\right\rfloor-s_{n_1-2;1}\choose i}
\endaligned$$ for some $d_i''\in \mathbb{N}$.
Then it follows that 
$$
 \prod_{w=1}^{n_{2}-3}{{A_{w+1;2} - r_{2}s_{w;2}}\choose{\tau_{w;2}}}
=
\sum_{i=0}^{\sum_{w=1}^{n_2-3}\tau_{w;2} } d_i {\left\lfloor (A_{n_1-1;1}-\varsigma) \frac{A_{n_1-1;1}}{A_{n_1;1}}\right\rfloor-s_{n_1-2;1}\choose i}.
$$

Thus we need to show the existence of the nonnegative integers $a$ and $b$.
Using the definitions of $A_{w+1;2}$ and $\varsigma$  as well as  the fact that $r_2=\xi_1$, we get
$$\aligned
A_{w+1;2}-r_{2} s_{w;2}
 = c_{w+2}^{[\xi_1]}(A_{n_1-1;1}-r_1s_{n_1-2;1} ) + c_{w+1}^{[\xi_1]}\omega_1 s_{n_1-2;1}- \xi_1\left(c_{w+1}^{[\xi_1]} \varsigma+ \sum_{j=1}^{w-1} c_{w+1-j}^{[\xi_1]} \tau_{j;2}\right)\endaligned$$ which can be written as
\begin{equation}\label{eq 45}
 A_{w+1;2}-r_{2} s_{w;2}
 = (c_{w+2}^{[\xi_1]}r_1-c_{w+1}^{[\xi_1]}\omega_1)\left(\left\lfloor (A_{n_1-1;1}-\varsigma) \frac{A_{n_1-1;1}}{A_{n_1;1}}\right\rfloor-s_{n_1-2;1} \right)+C(w),
 \end{equation}
 where $C(w)$ is some function of $w$, which is independent of $s_{n_1-2;1}$ and which we give explicitly below. Note that $$c_{w+2}^{[\xi_1]}r_1-c_{w+1}^{[\xi_1]}\omega_1 >0,$$ because, by Lemma \ref{lem nonacyclic}, this is the number of arrows between some pair of vertices in some seed between $t_{\omega_Z}$ and $t$. Thus is suffices to show that $C(w)$ is nonnegative.

$$\aligned
C(w)=&(c_{w+2}^{[\xi_1]}r_1-c_{w+1}^{[\xi_1]}\omega_1)\left( (A_{n_1-1;1}-\varsigma) \frac{A_{n_1-1;1}}{A_{n_1;1}}-\left\lfloor (A_{n_1-1;1}-\varsigma) \frac{A_{n_1-1;1}}{A_{n_1;1}}\right\rfloor\right)+ \tilde{C}(w) \theta(w), \endaligned$$
where
$$\aligned
 \tilde{C}(w)&=c_{w+2}^{[\xi_1]} - (c_{w+2}^{[\xi_1]}r_1-c_{w+1}^{[\xi_1]}\omega_1)\frac{A_{n_1-1;1}}{A_{n_1;1}} \text{ and}  \\
\theta(w) &= A_{n_1-1;1} - \frac{\xi_1c_{w+1}^{[\xi_1]} - (c_{w+2}^{[\xi_1]}r_1-c_{w+1}^{[\xi_1]}\omega_1)\frac{A_{n_1-1;1}}{A_{n_1;1}}}{c_{w+2}^{[\xi_1]} - (c_{w+2}^{[\xi_1]}r_1-c_{w+1}^{[\xi_1]}\omega_1)\frac{A_{n_1-1;1}}{A_{n_1;1}}}\varsigma- \sum_{j=1}^{w-1}  \frac{\xi_1c_{w+1-j}^{[\xi_1]}}{c_{w+2}^{[\xi_1]} - (c_{w+2}^{[\xi_1]}r_1-c_{w+1}^{[\xi_1]}\omega_1)\frac{A_{n_1-1;1}}{A_{n_1;1}}} \tau_{j;2}.
  \endaligned$$
We want to show that $C(w)$ is nonnegative for $1\le w\le n_2-3$, for which it suffices to show that $\tilde{C}(w)$ and $\theta(w)$ are nonnegative.

First we show that $\tilde{C}(w)$ are nonnegative for $w\geq 1$. Note that $\tilde{C}(w)=\xi_1\tilde{C}(w-1)-\tilde{C}(w-2)$. Then
if we show $\tilde{C}(1)>0\geq \tilde{C}(0)$ then the induction on $w$ will show that $\tilde{C}(w)$ is increasing with $w$. By Lemma \ref{negone}, we have $\tilde{C}(0)=1-r_1\frac{A_{n_1-1;1}}{A_{n_1;1}} \leq 0$. On the other hand,
$$\begin{array}{rcccl}\tilde{C}(1)&=&
\xi_1-(\xi_1r_1-\omega_1)\frac{A_{n_1-1;1}}{A_{n_1;1}}&=&
\xi_1(\frac{{A_{n_1;1}}-r_1A_{n_1-1;1}}{A_{n_1;1}}) + \omega_1\frac{A_{n_1-1;1}}{A_{n_1;1}}\\
&=&
\xi_1(\frac{-{A_{n_1-2;1}}}{A_{n_1;1}}) + \omega_1\frac{A_{n_1-1;1}}{A_{n_1;1}}\end{array} $$
which is positive because of equation~(\ref{eq 44.1}).

Next we show that $\theta(w)$ are nonnegative for all $w$ such that $1\le w\le n_2-3$.   Recall from (\ref{theta}) that
\begin{equation*}\label{02292012eq2}\theta=A_{n_1-2;1} -r_1\varsigma - \sum_{j=1}^{n_2-3} (c_{j+2}^{[\xi_1]}r_1 - c_{j+1}^{[\xi_1]}\omega_1)\tau_{j;2}  >0.\end{equation*}
Multiplying with $\frac{A_{n_1-1;1} }{A_{n_1-2;1} }$ yields
\begin{equation*}\label{04222012eq5}A_{n_1-1;1} -\frac{r_1 A_{n_1-1;1} }{A_{n_1-2;1} }\varsigma - \sum_{j=1}^{w-1} \frac{(c_{j+2}^{[\xi_1]}r_1 - c_{j+1}^{[\xi_1]}\omega_1) A_{n_1-1;1} }{A_{n_1-2;1} }\tau_{j;2}  >0.\end{equation*}
So it is enough to show that  if $\xi_1, r_1, \omega_1 \ge 2$
then

\begin{equation}\label{eq page 33 bis}\frac{r_1 A_{n_1-1;1} }{A_{n_1-2;1} } >  \frac{\xi_1c_{w+1}^{[\xi_1]} - (c_{w+2}^{[\xi_1]}r_1-c_{w+1}^{[\xi_1]}\omega_1)\frac{A_{n_1-1;1}}{A_{n_1;1}}}{c_{w+2}^{[\xi_1]} - (c_{w+2}^{[\xi_1]}r_1-c_{w+1}^{[\xi_1]}\omega_1)\frac{A_{n_1-1;1}}{A_{n_1;1}}}\end{equation} and 
\begin{equation}\label{eq page 33}\frac{(c_{j+2}^{[\xi_1]}r_1 - c_{j+1}^{[\xi_1]}\omega_1) A_{n_1-1;1} }{A_{n_1-2;1} } > \frac{\xi_1c_{w+1-j}^{[\xi_1]}}{c_{w+2}^{[\xi_1]} - (c_{w+2}^{[\xi_1]}r_1-c_{w+1}^{[\xi_1]}\omega_1)\frac{A_{n_1-1;1}}{A_{n_1;1}}},\end{equation}


 This ends the proof of Lemma~\ref{0303lem945}, modulo the inequalities (\ref{eq page 33 bis}) and (\ref{eq page 33}), which are proved in the following subsection.
\end{proof}

\subsubsection{Proof of (\ref{eq page 33 bis})  and (\ref{eq page 33}).
} 
It follows from Lemma \ref{negone} that the left hand side of (\ref{eq page 33 bis}) is equal to 
$({A_{n_1-2;1} + {A_{n_1;1}}}) / {{A_{n_1-2;1}}} = 1 + {A_{n_1;1}}/ {{A_{n_1-2;1}}}  $.
Thus (\ref{eq page 33 bis}) is equivalent to
 $$1+\frac{A_{n_1;1}}{A_{n_1-2;1}} > \frac{\xi_1 c_{w+1}^{[\xi_1]} A_{n_1;1}- (c_{w+2}^{[\xi_1]} r_1 - c_{w+1}^{[\xi_1]} \omega_1)A_{n_1-1;1} }{c_{w+2}^{[\xi_1]} A_{n_1;1}- (c_{w+2}^{[\xi_1]} r_1 - c_{w+1}^{[\xi_1]} \omega_1)A_{n_1-1;1} }$$
\medskip
$$ \Longleftrightarrow\ \ \
1+\frac{A_{n_1;1}}{A_{n_1-2;1}} > 1+\frac{(\xi_1 c_{w+1}^{[\xi_1]} -c_{w+2}^{[\xi_1]})A_{n_1;1} }{c_{w+2}^{[\xi_1]} A_{n_1;1}- (c_{w+2}^{[\xi_1]} r_1 - c_{w+1}^{[\xi_1]} \omega_1)A_{n_1-1;1} }$$
\medskip
$$\Longleftrightarrow\ \ \
\frac{1}{A_{n_1-2;1}} > \frac{\xi_1 c_{w+1}^{[\xi_1]} -c_{w+2}^{[\xi_1]} }{c_{w+2}^{[\xi_1]} A_{n_1;1}- (c_{w+2}^{[\xi_1]} r_1 - c_{w+1}^{[\xi_1]} \omega_1)A_{n_1-1;1} }$$
\medskip
$$\overset{\text{by recursive definition of }c_{w}^{[\xi_1]}}\Longleftrightarrow\ \ \
\frac{1}{A_{n_1-2;1}} > \frac{c_{w}^{[\xi_1]} }{c_{w+2}^{[\xi_1]} A_{n_1;1}- (c_{w+2}^{[\xi_1]} r_1 - c_{w+1}^{[\xi_1]} \omega_1)A_{n_1-1;1} }$$
\medskip
\begin{equation}\label{10192013eq1}\overset{\text{Lemma }\ref{negone}}\Longleftrightarrow\ \ \
\frac{1}{A_{n_1-2;1}} > \frac{c_{w}^{[\xi_1]} }{c_{w+1}^{[\xi_1]} \omega_1A_{n_1-1;1}  -c_{w+2}^{[\xi_1]} A_{n_1-2;1}}\end{equation}
\medskip
$$\Longleftrightarrow\ \ \
c_{w+1}^{[\xi_1]} \omega_1A_{n_1-1;1}  -c_{w+2}^{[\xi_1]} A_{n_1-2;1}   > c_{w}^{[\xi_1]}A_{n_1-2;1}$$
\medskip
$$\Longleftrightarrow\ \ \
c_{w+1}^{[\xi_1]} \omega_1A_{n_1-1;1}     - A_{n_1-2;1} (c_{w}^{[\xi_1]}+ c_{w+2}^{[\xi_1]}) >0$$
\medskip
$$\overset{\text{by recursive definition of }c_{w}^{[\xi_1]}}\Longleftrightarrow\ \ \
c_{w+1}^{[\xi_1]} (\omega_1A_{n_1-1;1} -  \xi_1 A_{n_1-2;1} )  > 0.$$
This last inequality holds by equation \ref{eq 44.1}, and thus this completes the prove of (\ref{eq page 33 bis}).

To prove the inequality (\ref{eq page 33}) we start with 
the inequality~\eqref{10192013eq1} above. 
Suppose first that
$${c_{w}^{[\xi_1]}}\ge \frac{\xi_1 c_{w+1-j}^{[\xi_1]} A_{n_1;1}}{(c_{j+2}^{[\xi_1]} r_1 - c_{j+1}^{[\xi_1]} \omega_1)A_{n_1-1;1}} $$
Then \eqref{10192013eq1}  implies
$$
\frac{(c_{j+2}^{[\xi_1]} r_1 - c_{j+1}^{[\xi_1]} \omega_1)A_{n_1-1;1}}{A_{n_1-2;1}} > \frac{\xi_1 c_{w+1-j}^{[\xi_1]} A_{n_1;1} }{c_{w+1}^{[\xi_1]} \omega_1A_{n_1-1;1}-c_{w+2}^{[\xi_1]}A_{n_1-2;1}  }$$
which is equivalent to the inequality (\ref{eq page 33}), because $A_{n_1-2;1}=rA_{n_1-1;1}-A_{n_1;1}$, by  Lemma~\ref{negone}.

Suppose to the contrary that 

$${c_{w}^{[\xi_1]}}< \frac{\xi_1 c_{w+1-j}^{[\xi_1]} A_{n_1;1}}{(c_{j+2}^{[\xi_1]} r_1 - c_{j+1}^{[\xi_1]} \omega_1)A_{n_1-1;1}}. $$
Then
$$c_{j+2}^{[\xi_1]} r_1 - c_{j+1}^{[\xi_1]} \omega_1 < \frac{\xi_1 c_{w+1-j}^{[\xi_1]} A_{n_1;1}}{c_{w}^{[\xi_1]}A_{n_1-1;1}} $$
and dividing by ${c_{j+1}^{[\xi_1]}}$ and using $ A_{n_1;1}=r_1A_{n_1-1;1}-A_{n_1-2;1}$ yields
\begin{equation}\label{case j>1 eq1}
\frac{c_{j+2}^{[\xi_1]}}{c_{j+1}^{[\xi_1]}} r_1 - \omega_1 <  \frac{\xi_1 c_{w+1-j}^{[\xi_1]} (r_1A_{n_1-1;1}-A_{n_1-2;1})}{c_{j+1}^{[\xi_1]} c_{w}^{[\xi_1]}A_{n_1-1;1}}  \le \frac{\xi_1 c_{w+1-j}^{[\xi_1]} r_1}{c_{j+1}^{[\xi_1]} c_{w}^{[\xi_1]}}.\end{equation}

\noindent\textbf{Case 1: } Suppose that $j\geq 2$.
Recall that $c_1^{[\xi_1]}=0, c_2^{[\xi_1]}=1, c_3^{[\xi_1]}=\xi_1,$ and then  (\ref{case j>1 eq1}) implies
$$\frac{c_{j+2}^{[\xi_1]}}{c_{j+1}^{[\xi_1]}} r_1 - \omega_1 <  \frac{c_{w-1}^{[\xi_1]}r_1}{c_{w}^{[\xi_1]}},$$
so
\begin{equation*}
\omega_1>\left(\frac{c_{j+2}^{[\xi_1]}}{c_{j+1}^{[\xi_1]}}- \frac{c_{w-1}^{[\xi_1]}}{c_{w}^{[\xi_1]}}\right)r_1>\left(\lim_{j\rightarrow\infty}\frac{c_{j+2}^{[\xi_1]}}{c_{j+1}^{[\xi_1]}}- \lim_{w\rightarrow\infty} \frac{c_{w-1}^{[\xi_1]}}{c_{w}^{[\xi_1]}}\right)r_1,
\end{equation*}
where the last inequality holds since  $\frac{c_{j+2}^{[\xi_1]}}{c_{j+1}^{[\xi_1]}}<\frac{c_{j+3}^{[\xi_1]}}{c_{j+2}^{[\xi_1]}} $, for all $j\ge 1$.
Computing the limits, we obtain

%
 
\begin{equation}\label{10192013eq2}
\omega_1>\left(\frac{\xi_1+\sqrt{\xi_1^2-4}}{2}-\frac{\xi_1-\sqrt{\xi_1^2-4}}{2}\right)r_1=r_1\sqrt{\xi_1^2-4}.
\end{equation}

\begin{itemize}
\item 
If $\xi_1\geq 3$ then \eqref{10192013eq2} implies $\omega_1>r_1(\xi_1-1)$, 
thus $\omega_1\ge 5$ and $\omega_1-\xi_1>\xi_1r_1 -r_1-\xi_1 \geq 1$. 
%

\item If $\xi_1=2$ and $\omega_1\geq 4$ then, since $0<{A_{n_1-2;1}}/{A_{n_1-1;1}} <1$, we  have $$\omega_1-\xi_1\frac{A_{n_1-2;1}}{A_{n_1-1;1}}\geq 2.$$

\item If $\xi_1=2$, $\omega_1=3$ and $r_1\geq 3$  then we still have $$\omega_1-\xi_1\frac{A_{n_1-2;1}}{A_{n_1-1;1}}\geq 2.$$

\item If $\xi_1=2$, $\omega_1=3$ and $r_1=2$  then the subquiver obtained from $t^*$ by mutating at $d'$ is acyclic, so we do not consider this case.

\item If $\xi_1=\omega_1=2$ and $r_1\geq 3$  then $c_{j+2}^{[\xi_1]} r_1 - c_{j+1}^{[\xi_1]} \omega_1\geq 2$ and $\omega_1-\xi_1\frac{A_{n_1-2;1}}{A_{n_1-1;1}}\geq 1$.

\end{itemize}

In any of the above cases we have
\begin{equation}\label{same argument}
(c_{j+2}^{[\xi_1]} r_1 - c_{j+1}^{[\xi_1]} \omega_1)( \omega_1-\xi_1\frac{A_{n_1-2;1}}{A_{n_1-1;1}})\geq  2.\end{equation}
\medskip
$$\Longrightarrow\ \ \
(c_{j+2}^{[\xi_1]} r_1 - c_{j+1}^{[\xi_1]} \omega_1)( \omega_1-\xi_1\frac{A_{n_1-2;1}}{A_{n_1-1;1}})\geq  \frac{\xi_1 c_{w+1-j}^{[\xi_1]}}{c_{w+1}^{[\xi_1]}}$$
\medskip
$$\Longrightarrow\ \ \
(c_{j+2}^{[\xi_1]} r_1 - c_{j+1}^{[\xi_1]} \omega_1)( \omega_1A_{n_1-1;1}-\xi_1A_{n_1-2;1})\geq  \frac{\xi_1 c_{w+1-j}^{[\xi_1]} A_{n_1-1;1} }{c_{w+1}^{[\xi_1]}}.$$
Since  $c_{w}^{[\xi_1] }=\xi_1c_{w-1}^{[\xi_1]}-c_{w-2}^{[\xi_1]  }$, and thus $\xi_1c_{w-1}^{[\xi_1]}>c_{w-2}^{[\xi_1]  }$, it follows that
$$
(c_{j+2}^{[\xi_1]} r_1 - c_{j+1}^{[\xi_1]} \omega_1)(c_{w+1}^{[\xi_1]} \omega_1A_{n_1-1;1}-c_{w+2}^{[\xi_1]}A_{n_1-2;1})\geq  {\xi_1 c_{w+1-j}^{[\xi_1]} A_{n_1-1;1} }$$
\medskip
$$\Longleftrightarrow\ \ \
(c_{j+2}^{[\xi_1]} r_1 - c_{j+1}^{[\xi_1]} \omega_1)(c_{w+1}^{[\xi_1]} \omega_1A_{n_1-1;1}-c_{w+2}^{[\xi_1]}A_{n_1-2;1})A_{n_1-1;1}\geq  {\xi_1 c_{w+1-j}^{[\xi_1]} A_{n_1-1;1}^2 }$$
\medskip
$$\overset{\text{Lemma }\ref{negone}}\Longrightarrow\ \ \
(c_{j+2}^{[\xi_1]} r_1 - c_{j+1}^{[\xi_1]} \omega_1)(c_{w+1}^{[\xi_1]} \omega_1A_{n_1-1;1}-c_{w+2}^{[\xi_1]}A_{n_1-2;1})A_{n_1-1;1}> {\xi_1 c_{w+1-j}^{[\xi_1]} A_{n_1;1} }{A_{n_1-2;1}}$$
\medskip
$$\Longleftrightarrow\ \ \
\frac{(c_{j+2}^{[\xi_1]} r_1 - c_{j+1}^{[\xi_1]} \omega_1)A_{n_1-1;1}}{A_{n_1-2;1}} > \frac{\xi_1 c_{w+1-j}^{[\xi_1]} A_{n_1;1} }{c_{w+1}^{[\xi_1]} \omega_1A_{n_1-1;1}-c_{w+2}^{[\xi_1]}A_{n_1-2;1}  }$$
\medskip
$$\overset{\text{Lemma }\ref{negone}}\Longleftrightarrow\ \ \
\frac{(c_{j+2}^{[\xi_1]} r_1 - c_{j+1}^{[\xi_1]} \omega_1)A_{n_1-1;1}}{A_{n_1-2;1}} > \frac{\xi_1 c_{w+1-j}^{[\xi_1]} A_{n_1;1} }{c_{w+2}^{[\xi_1]} A_{n_1;1}- (c_{w+2}^{[\xi_1]} r_1 - c_{w+1}^{[\xi_1]} \omega_1)A_{n_1-1;1} }$$
which proves the inequality  (\ref{eq page 33}) in these cases.

Next suppose that $\xi_1=r_1=\omega_1=2$. In this case, $c_{j+1}^{[\xi_1]} =j$, and thus $(c_{j+2}^{[\xi_1]} r_1 - c_{j+1}^{[\xi_1]} \omega_1)=2$ and $(c_{w+2}^{[\xi_1]} r_1 - c_{w+1}^{[\xi_1]} \omega_1)=2$. It therefore suffices  to show that 
$$
\frac{2A_{n_1-1;1}}{A_{n_1-2;1}} > \frac{2 (w-j) A_{n_1;1} }{(w+1) A_{n_1;1}- 2A_{n_1-1;1} },$$
but this is equivalent to
$$(w+1)A_{n_1;1}A_{n_1-1;1} > (w-j)A_{n_1;1}A_{n_1-2;1} + 2A_{n_1-1;1}A_{n_1-1;1},$$
which holds true for $j\geq 1$, by Lemma \ref{negone}. 
This completes the proof in the case $j\ge 2$.

\noindent\textbf{Case 2: } Suppose that $j=1$. Let $\widehat{\xi_1}=\lim_{w\rightarrow\infty}\frac{c_{w+1}^{[\xi_1]}}{c_{w}^{[\xi_1]}}$, in other words, $\widehat{\xi_1}=\frac{\xi_1+\sqrt{\xi_1^2-4}}{2}.$

\noindent\textbf{Case 2-1: }  Suppose that 
\begin{equation}\label{10242013eq1} \frac{A_{n_1-1;1}}{A_{n_1-2;1}}\omega_1 -  \frac{(\xi_1 r_1 - \omega_1)\widehat{\xi_1}(\omega_1A_{n_1-1;1}-\xi_1A_{n_1-2;1})A_{n_1-1;1}}{A_{n_1-2;1}^2}\geq \xi_1.\end{equation}

If $r_1=2$ then one can check (\ref{eq page 33}) directly. Suppose $r_1\geq 3$. Then  $\omega_1 c_{0}^{[r_1]}-\xi_1 c_{-1}^{[r_1]} = - w_1+\xi_1 r_1$ is the number of arrows between $e$ and $d'$ in the seed $t^*$, and 
$ \omega_1 c_{n-1}^{[r_1]}-\xi_1 c_{n-2}^{[r_1]}$  is the number of arrows between some pair of vertices in the seed $t_{w_{E}}.$ In particular, these numbers are positive.
Then by Lemma~\ref{10242013lem2},  we have  $(\omega_1 c_{0}^{[r_1]}-\xi_1 c_{-1}^{[r_1]})(\omega_1 c_{n-1}^{[r_1]}-\xi_1 c_{n-2}^{[r_1]})>c_{n-2}^{[r_1]}$ for $n\in\{n_1-2, n_1-1\}$. 
So $$(\xi_1 r_1 - \omega_1)(\omega_1 c_{n-1}^{[r_1]}-\xi_1 c_{n-2}^{[r_1]})>c_{n-2}^{[r_1]}$$ hence
$$(\xi_1 r_1 - \omega_1)(\omega_1 A_{n_1-1;1}-\xi_1 A_{n_1-2;1})>A_{n_1-2;1},$$
because $A_{i;1}$ is a positive linear combination of $c_i^{[r_1]}$ and $c_{i+1}^{[r_1]}.$ 

Then \eqref{10242013eq1} implies that
$$
\left(\omega_1- \widehat{\xi_1}\right) \frac{A_{n_1-1;1}}{A_{n_1-2;1}}\geq  \xi_1$$
$$\Longrightarrow\ \ \
\left(\omega_1-\frac{2}{\xi_1 r_1 - \omega_1}\right) \frac{A_{n_1-1;1}}{A_{n_1-2;1}}\geq  \xi_1$$
\medskip$$\Longleftrightarrow\ \ \
\omega_1-\frac{2}{\xi_1 r_1 - \omega_1} \geq  \xi_1\frac{A_{n_1-2;1}}{A_{n_1-1;1}}$$
\medskip$$\Longleftrightarrow\ \ \
\omega_1-\xi_1\frac{A_{n_1-2;1}}{A_{n_1-1;1}}\geq  \frac{2}{\xi_1 r_1 - \omega_1}$$
\medskip$$\Longleftrightarrow\ \ \
(\xi_1 r_1 - \omega_1)( \omega_1-\xi_1\frac{A_{n_1-2;1}}{A_{n_1-1;1}})\geq  2,
$$ and we can use the argument following (\ref{same argument}).

\begin{lemma}\label{10242013lem2}
Fix a positive integer $r\geq 3$ and let $c_i=c_i^{[r]}$. Let $\omega, \xi, a, b$ be integers such that $b-a$, $\omega c_{a}-\xi c_{a-1}$ and $\omega c_{b}-\xi c_{b-1}$ are positive. Then $(\omega c_{a}-\xi c_{a-1})(\omega c_{b}-\xi c_{b-1})> c_{b-a-1}$.
\end{lemma}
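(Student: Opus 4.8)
The plan is to set $f(n)=\omega c_n-\xi c_{n-1}$ and $m=b-a\ge 1$, so that the claim reads $f(a)f(b)>c_{m-1}$. Since $f(n)=\omega c_n-\xi c_{n-1}$ has the form $A_{n-1}$ of Lemma~\ref{negone} (with $p=\omega$, $q=-\xi$), it satisfies $f(n)=rf(n-1)-f(n-2)$, and from this one obtains for every $k$ the expansions
\[ f(a+k)=c_{k+1}\,f(a+1)-c_k\,f(a),\qquad f(b-k)=c_{k+1}\,f(b-1)-c_k\,f(b), \]
each verified by checking $k=0,1$ and inducting on $k$. I would then split into three cases according to the behaviour of $f$ at the two ends. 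Note first that $f$ takes positive integer values at $a$ and $b$, so $f(a),f(b)\ge 1$.

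I would first dispose of the two easy cases. Suppose $f(a+1)\ge f(a)$. Using $c_{k+1}\ge c_k\ge 0$ for $k\ge 1$ and $f(a)\ge 1$, the first expansion with $k=m$ gives $f(b)\ge (c_{m+1}-c_m)f(a)\ge c_{m+1}-c_m$. Since $r\ge 3$ forces $c_{m+1}=rc_m-c_{m-1}\ge 3c_m-c_{m-1}>c_m+c_{m-1}$, we get $c_{m+1}-c_m>c_{m-1}$, whence $f(a)f(b)\ge f(b)>c_{m-1}$. The symmetric case $f(b-1)\ge f(b)$ is handled identically with the backward expansion, yielding $f(a)>c_{m-1}$.

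The remaining case is $f(a+1)<f(a)$ and $f(b-1)<f(b)$, which forces $m\ge 2$. Here $f$ decreases at the left end and increases at the right end, so its minimum over $[a,b]$ is attained at some interior $n^{*}$ with $a<n^{*}<b$, and $f(n^{*}\mp 1)\ge f(n^{*})\ge 1$. Because $f(n)^2-f(n+1)f(n-1)=Q:=\omega^2-r\omega\xi+\xi^2$ is independent of $n$ (Lemma~\ref{negone}(b) with $p=\omega$, $q=-\xi$), evaluating at $n^{*}$ gives $Q=f(n^{*})^2-f(n^{*}+1)f(n^{*}-1)\le 0$; and $Q\ne 0$, for if $\xi\ne0$ then $Q=0$ would force $\omega/\xi\in\{\lambda,\mu\}$ with $\lambda,\mu=\tfrac{r\pm\sqrt{r^2-4}}{2}$ irrational ($r^2-4$ is not a perfect square for $r\ge 3$), while $\xi=0$ forces $\omega=0$ and $f\equiv0$, contradicting $f(a)>0$. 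Thus $Q\le -1$. To conclude I would use the closed form: writing $c_n=\frac{\lambda^{n-1}-\mu^{n-1}}{\lambda-\mu}$ gives $f(n)=\frac{\lambda^{n-2}(\omega\lambda-\xi)-\mu^{n-2}(\omega\mu-\xi)}{\lambda-\mu}$, and multiplying out $f(a)f(b)$ using $\lambda\mu=1$ and $(\omega\lambda-\xi)(\omega\mu-\xi)=Q$ yields
\[ (r^2-4)\,f(a)f(b)=\lambda^{a+b-4}(\omega\lambda-\xi)^2+\mu^{a+b-4}(\omega\mu-\xi)^2-Q\,(\lambda^m+\mu^m). \]
The first two summands are positive, so with $-Q=|Q|\ge 1$ and $L_m:=\lambda^m+\mu^m$ we get $(r^2-4)f(a)f(b)>L_m$, and it remains to check $L_m/(r^2-4)>c_{m-1}$. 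This reduces to $L_m>L_{m-1}-L_{m-3}$, which is immediate since $L_k=\lambda^k+\mu^k>0$ for all $k$ and $L_m>L_{m-1}$.

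The main obstacle is Case~3. The delicate point is to obtain the sign $Q\le 0$ cheaply: locating the interior minimizer $n^{*}$ lets one read it off from the invariant without analyzing the two exponential modes $\lambda^n,\mu^n$ of $f$, and must be paired with the nondegeneracy $Q\ne 0$ coming from $r^2-4$ not being a square. After that the closed-form identity and the comparison $L_m/(r^2-4)>c_{m-1}$ are mechanical. If one prefers not to invoke the general argument for very small gaps, the base cases $m=1,2$ are trivial from $f(a)f(b)\ge1$ (as $c_0=-1$, $c_1=0$), and $m=3$ (where $c_2=1$) follows from the observation that $f(a)=f(a+3)=1$ is impossible, since the recurrence then forces the intermediate value $f(a+1)=\tfrac{1}{r-1}\notin\mathbb{Z}$.
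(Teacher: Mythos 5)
Your argument is sound in substance and reaches the conclusion by a genuinely different route from the paper. Writing $f(n)=\omega c_n-\xi c_{n-1}$, the paper splits on the sign of the invariant $Q=\omega^2-r\omega\xi+\xi^2$: when $Q<0$ it runs an induction on $b-a$ using the Casoratian-type identity $f(a)f(b+1)-f(a+1)f(b)=-Q\,c_{b-a+1}$, and when $Q>0$ it locates a sign change $e<a$ of $f$ and expands $f(b)=f(e+1)c_{b-e+1}+|f(e)|c_{b-e}$. You instead split on the local monotonicity of $f$ at the two endpoints: your Cases 1--2 replace the paper's $Q>0$ branch by a direct two-term expansion from an endpoint, and your Case 3 replaces the induction by the Binet closed form, which makes the final comparison $L_m>(r^2-4)c_{m-1}=L_{m-1}-L_{m-3}$ (with your $L_m=\lambda^m+\mu^m$) completely transparent. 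This buys a proof with no induction on $b-a$ and with the small-gap cases $m\le 2$ absorbed automatically; the cost is the need to control the sign of $Q$ in Case 3, which is where the one weak point lies.

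Specifically, you assert $f(n^*)\ge 1$ for the interior minimizer $n^*$ without justification. A sequence that is positive at both endpoints can in general be negative in between, and without $f(n^*)\ge 0$ you cannot pass from $f(n^*\pm 1)\ge f(n^*)$ to $f(n^*+1)f(n^*-1)\ge f(n^*)^2$, which is exactly the step that yields $Q\le 0$. The gap is real but easily closed using the recurrence and $r\ge 3$: if $f(n^*)\le -1$, then $f(n^*+1)\ge f(n^*)$ rewrites as $f(n^*-1)\le (r-1)f(n^*)<f(n^*)$ (since $r-1\ge 2$ and $f(n^*)<0$), contradicting minimality; and $f(n^*)=0$ forces $f(n^*+1)=-f(n^*-1)$, so minimality forces both neighbors to vanish and hence $f\equiv 0$, contradicting $f(a)\ge 1$. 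With these two lines added, your proof is complete and correct.
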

\begin{proof}
Suppose that $\omega^2 - r\omega\xi+\xi^2 <0$. If $b-a\leq 2$  then $c_{b-a-1}\leq 0$, so there is nothing to show.
Thanks to Lemma \ref{lem cn}, we get $$(\omega c_{a}-\xi c_{a-1})(\omega c_{b+1}-\xi c_{b})-(\omega c_{a+1}-\xi c_{a})(\omega c_{b}-\xi c_{b-1})=-c_{b-a+1}(\omega^2 - r\omega\xi+\xi^2),$$ and the desired statement follows from induction on $b-a$.

Suppose that $\omega^2 - r\omega\xi+\xi^2 >0$. Since $c_i=-c_{2-i}$ for $i\in\mathbb{Z}$, we assume $(\omega c_{a}-\xi c_{a-1})<(\omega c_{b}-\xi c_{b-1})$ without loss of generality. Then there exists an integer $e<a$ such that $\omega c_{e+1}-\xi c_{e}>0$ and $\omega c_{e}-\xi c_{e-1}\leq 0$. Then again using $c_i=-c_{2-i}$ and Lemma 3.1, we get 
$$
\omega c_{b}-\xi c_{b-1}=(\omega c_{e+1}-\xi c_{e})c_{b-e+1} + |\omega c_{e}-\xi c_{e-1}|c_{b-e}, 
$$which is clearly greater than $c_{b-a-1}$.
\end{proof}

\noindent\textbf{Case 2-2: }  
Suppose that 
$$ \frac{A_{n_1-1;1}}{A_{n_1-2;1}}\omega_1 -  \frac{(\xi_1 r_1 - \omega_1)\widehat{\xi_1}(\omega_1A_{n_1-1;1}-\xi_1A_{n_1-2;1})A_{n_1-1;1}}{A_{n_1-2;1}^2}< \xi_1. $$
Then $$\xi_1\frac{A_{n_1-2;1}}{A_{n_1-1;1}} > \omega_1 -\frac{(\xi_1 r_1 - \omega_1)\widehat{\xi_1}(\omega_1A_{n_1-1;1}-\xi_1A_{n_1-2;1})}{A_{n_1-2;1}}$$
$$\Longleftrightarrow\ \ \
\frac{(\xi_1 r_1 - \omega_1)\widehat{\xi_1}(\omega_1A_{n_1-1;1}-\xi_1A_{n_1-2;1})}{A_{n_1-2;1}} + (\xi_1 r_1 - \omega_1) > \xi_1r_1-\xi_1\frac{A_{n_1-2;1}}{A_{n_1-1;1}}$$
$$\overset{\text{Lemma }3.10(a)}\Longleftrightarrow\ \ \
\frac{(\xi_1 r_1 - \omega_1)\widehat{\xi_1}(\omega_1A_{n_1-1;1}-\xi_1A_{n_1-2;1})}{A_{n_1-2;1}} + (\xi_1 r_1 - \omega_1) > \xi_1\frac{A_{n_1;1}}{A_{n_1-1;1}}$$
\medskip$$\Longleftrightarrow\ \ \
(\xi_1 r_1 - \omega_1)A_{n_1-1;1} >\frac{ \xi_1 A_{n_1;1}}{\frac{\widehat{\xi_1}(\omega_1A_{n_1-1;1}-\xi_1A_{n_1-2;1})}{A_{n_1-2;1}} +1  }$$
\medskip$$\Longleftrightarrow\ \ \
\frac{(\xi_1 r_1 - \omega_1)A_{n_1-1;1}}{A_{n_1-2;1}} >\frac{ \xi_1 A_{n_1;1}}{\widehat{\xi_1}(\omega_1A_{n_1-1;1}-\xi_1A_{n_1-2;1} ) +A_{n_1-2;1}  }$$
\medskip$$\overset{\text{Lemma}~\ref{10242013lem1}}\Longrightarrow\ \ \
\frac{(\xi_1 r_1 - \omega_1)A_{n_1-1;1}}{A_{n_1-2;1}} >\frac{ \xi_1 c_{w}^{[\xi_1]} A_{n_1;1}}{c_{w+1}^{[\xi_1]}(\omega_1A_{n_1-1;1}-\xi_1A_{n_1-2;1} ) +c_{w}^{[\xi_1]}A_{n_1-2;1}  }$$
\medskip$$\overset{\text{by recursive definition of }c_{w}^{[\xi_1]}}\Longleftrightarrow\ \ \
\frac{(\xi_1 r_1 - \omega_1)A_{n_1-1;1}}{A_{n_1-2;1}} >\frac{ \xi_1 c_{w}^{[\xi_1]} A_{n_1;1}}{c_{w+1}^{[\xi_1]}\omega_1A_{n_1-1;1}-c_{w+2}^{[\xi_1]}A_{n_1-2;1}  }$$
\medskip$$\overset{\text{Lemma }\ref{negone}}\Longleftrightarrow\ \ \
\frac{(\xi_1 r_1 - \omega_1)A_{n_1-1;1}}{A_{n_1-2;1}} > \frac{\xi_1 c_{w}^{[\xi_1]} A_{n_1;1} }{c_{w+2}^{[\xi_1]} A_{n_1;1}- (c_{w+2}^{[\xi_1]} r_1 - c_{w+1}^{[\xi_1]} \omega_1)A_{n_1-1;1} },$$
which is the inequality (\ref{eq page 33}) for $j=1$.
\begin{lemma}\label{10242013lem1}We have
$$\frac{ \xi_1 c_{w+1}^{[\xi_1]} A_{n_1;1}}{c_{w+2}^{[\xi_1]}(\omega_1A_{n_1-1;1}-\xi_1A_{n_1-2;1} ) +c_{w+1}^{[\xi_1]}A_{n_1-2;1}  } >\frac{ \xi_1 c_{w}^{[\xi_1]} A_{n_1;1}}{c_{w+1}^{[\xi_1]}(\omega_1A_{n_1-1;1}-\xi_1A_{n_1-2;1} ) +c_{w}^{[\xi_1]}A_{n_1-2;1}  }.$$
In particular,
$$ \lim_{w\rightarrow\infty}\frac{ \xi_1 c_{w}^{[\xi_1]} A_{n_1;1}}{c_{w+1}^{[\xi_1]}(\omega_1A_{n_1-1;1}-\xi_1A_{n_1-2;1} ) +c_{w}^{[\xi_1]}A_{n_1-2;1}  }>\frac{ \xi_1 c_{w}^{[\xi_1]} A_{n_1;1}}{c_{w+1}^{[\xi_1]}(\omega_1A_{n_1-1;1}-\xi_1A_{n_1-2;1} ) +c_{w}^{[\xi_1]}A_{n_1-2;1}  }.$$
\end{lemma}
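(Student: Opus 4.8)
The plan is to recognize that the displayed lemma is nothing but the statement that the sequence
$$f(w) \;=\; \frac{\xi_1\, c_w^{[\xi_1]}\, A_{n_1;1}}{c_{w+1}^{[\xi_1]}\,(\omega_1A_{n_1-1;1}-\xi_1A_{n_1-2;1}) + c_w^{[\xi_1]}A_{n_1-2;1}}$$
is strictly increasing in $w$; the first displayed inequality is exactly $f(w+1)>f(w)$, and the ``in particular'' clause is then the elementary fact that a strictly increasing sequence lies strictly below its limit. So I would reduce everything to proving monotonicity.

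First I would abbreviate $P = \omega_1A_{n_1-1;1}-\xi_1A_{n_1-2;1}$ and $Q = A_{n_1-2;1}$, recording that $P>0$ — this is precisely equation~(\ref{eq 44.1}) together with $\xi_1>0$ — and that $Q>0$. Since the common factor $\xi_1 A_{n_1;1}$ is positive, proving $f(w+1)>f(w)$ is equivalent to
$$\frac{c_{w+1}^{[\xi_1]}}{c_{w+2}^{[\xi_1]}P + c_{w+1}^{[\xi_1]}Q} \;>\; \frac{c_w^{[\xi_1]}}{c_{w+1}^{[\xi_1]}P + c_w^{[\xi_1]}Q}.$$
In the range $w\ge 1$ of interest both denominators are positive, since $c_{w+1}^{[\xi_1]}\ge c_2^{[\xi_1]}=1>0$, $c_w^{[\xi_1]}\ge c_1^{[\xi_1]}=0$, and $P,Q>0$; hence I may cross–multiply without reversing the inequality.

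Cross–multiplying and cancelling the terms $c_w^{[\xi_1]}c_{w+1}^{[\xi_1]}Q$ that occur on both sides collapses the inequality to
$$P\big((c_{w+1}^{[\xi_1]})^2 - c_w^{[\xi_1]}c_{w+2}^{[\xi_1]}\big) > 0.$$
By Lemma~\ref{lem cn}, applied with $n=w+2$ in the identity $(c_{n-1}^{[r]})^2-c_n^{[r]}c_{n-2}^{[r]}=1$, the bracket equals $1$, so the left–hand side is simply $P>0$, which we already know. This establishes monotonicity and hence the lemma. The one point demanding any care is the positivity of the two denominators, so that cross–multiplication is legitimate; but this follows at once from $P,Q>0$ and the nonnegativity of $c_w^{[\xi_1]}$ for $w\ge 1$. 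There is no genuine obstacle here: the Catalan–type identity of Lemma~\ref{lem cn} reduces the entire claim to the single, already-established inequality $P>0$.
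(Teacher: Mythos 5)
Your argument is correct and is essentially the paper's own: both proofs cross-multiply and invoke the identity $(c_{w+1}^{[\xi_1]})^2-c_w^{[\xi_1]}c_{w+2}^{[\xi_1]}=1$ from Lemma~\ref{lem cn} to collapse the inequality to $\omega_1A_{n_1-1;1}-\xi_1A_{n_1-2;1}>0$, which is equation~(\ref{eq 44.1}). The only differences are cosmetic and in your favor: you cancel the $A_{n_1-2;1}$-terms directly instead of first rewriting the denominators via the recursion (so you need only one instance of Lemma~\ref{lem cn} rather than two), and you explicitly check positivity of the denominators before cross-multiplying, a point the paper leaves implicit.
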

\begin{proof}
The statement is equivalent to
$$\frac{ \xi_1 c_{w+1}^{[\xi_1]} A_{n_1;1}}{c_{w+2}^{[\xi_1]}\omega_1A_{n_1-1;1}-c_{w+3}^{[\xi_1]}A_{n_1-2;1}  } >\frac{ \xi_1 c_{w}^{[\xi_1]} A_{n_1;1}}{c_{w+1}^{[\xi_1]}\omega_1A_{n_1-1;1}-c_{w+2}^{[\xi_1]}A_{n_1-2;1}  }$$
\medskip$$\Longleftrightarrow\ \ \
c_{w+1}^{[\xi_1]}c_{w+1}^{[\xi_1]}\omega_1A_{n_1-1;1}-c_{w+1}^{[\xi_1]}c_{w+2}^{[\xi_1]}A_{n_1-2;1} > c_{w}^{[\xi_1]}c_{w+2}^{[\xi_1]}\omega_1A_{n_1-1;1}-c_{w}^{[\xi_1]}c_{w+3}^{[\xi_1]}A_{n_1-2;1}$$
\medskip
$$\overset{\text{Lemma }3.1}\Longleftrightarrow\ \ \
\omega_1A_{n_1-1;1}-\xi_1A_{n_1-2;1} > 0.$$
\end{proof}

This completes the proof of Proposition~\ref{prop2.14} modulo the following Lemmas which are proved in \cite{LS3}. 

\begin{lemma}\cite[Lemma 4.8]{LS3}\label{0423lem1}
Let $a,b,c$ be any nonnegative integers. Then there are nonnegative integers $d_0,...,d_c$ such that
$$
{aX+b\choose c} = \sum_{i=0}^c d_i{X\choose i}
$$for all nonnegative integers $X$.
\end{lemma}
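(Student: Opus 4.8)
The plan is to prove the statement by induction on $c$, carrying $a$ and $b$ along as arbitrary nonnegative parameters at every stage. The soft part is almost automatic: $\binom{aX+b}{c}$ is a polynomial of degree $c$ in $X$, and since $\binom{X}{0},\dots,\binom{X}{c}$ form a basis for the polynomials in $X$ of degree at most $c$, an expansion $\binom{aX+b}{c}=\sum_{i=0}^c d_i\binom{X}{i}$ with $d_i\in\mathbb{Z}$ certainly exists. The entire content of the lemma is therefore the \emph{nonnegativity} of the $d_i$, and that is what the induction is designed to make visible.

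The engine is the forward-difference operator $\Delta f(X)=f(X+1)-f(X)$ together with two elementary identities. First, Pascal's rule telescopes to give, for $P(X)=\binom{aX+b}{c}$,
\[
\Delta P(X)=\binom{aX+a+b}{c}-\binom{aX+b}{c}=\sum_{j=1}^{a}\binom{aX+b+j-1}{c-1}.
\]
Crucially, each summand is again of the shape $\binom{aX+b'}{c-1}$ with $b'=b+j-1\ge 0$, so the induction hypothesis (with $c-1$ in place of $c$) writes each one as a nonnegative integer combination of $\binom{X}{0},\dots,\binom{X}{c-1}$. Summing over $j$ yields $\Delta P(X)=\sum_{i=0}^{c-1}q_i\binom{X}{i}$ with every $q_i$ a nonnegative integer.

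Second, I would recover $P$ from $\Delta P$ using $\Delta\binom{X}{i+1}=\binom{X}{i}$. Since $\Delta\bigl(P(X)-\sum_{i=0}^{c-1}q_i\binom{X}{i+1}\bigr)=0$, this difference is constant; evaluating at $X=0$, where each $\binom{0}{i+1}$ vanishes, identifies the constant as $P(0)=\binom{b}{c}$. Hence
\[
\binom{aX+b}{c}=\binom{b}{c}+\sum_{i=0}^{c-1}q_i\binom{X}{i+1},
\]
exhibiting the desired nonnegative integer coefficients $d_0=\binom{b}{c}$ and $d_i=q_{i-1}$ for $1\le i\le c$. The base case $c=0$ is immediate, as $\binom{aX+b}{0}=1=\binom{X}{0}$.

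The argument is routine once the difference identity is available, and I regard the telescoping step as the crux: it is essential that the summands of $\Delta P$ keep the inner index advancing by exactly $a$ per unit increment of $X$, so that each remains of the form $\binom{aX+b'}{c-1}$ and the induction on $c$ closes. Phrasing the recursion through $\Delta$ is precisely what makes positivity manifest; by contrast, the closed form $d_i=(\Delta^iP)(0)=\sum_{k=0}^i(-1)^{i-k}\binom{i}{k}\binom{ak+b}{c}$ is an alternating sum whose sign is far from transparent, so I would deliberately avoid reasoning from it directly.
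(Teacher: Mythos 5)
Your proof is correct. Note that the paper does not actually prove this lemma; it is quoted verbatim from \cite[Lemma 4.8]{LS3}, so there is no in-paper argument to compare against. Your induction on $c$ is sound: the telescoping identity $\binom{aX+a+b}{c}-\binom{aX+b}{c}=\sum_{j=1}^{a}\binom{aX+b+j-1}{c-1}$ keeps every summand in the inductive class (same $a$, shifted nonnegative $b'$, degree $c-1$), the antidifference step via $\Delta\binom{X}{i+1}=\binom{X}{i}$ is valid, and the constant of summation $\binom{b}{c}$ is visibly a nonnegative integer; the degenerate case $a=0$ is also handled correctly since the telescoping sum is then empty. For comparison, there is an even shorter combinatorial argument that avoids induction entirely: partition a ground set of size $aX+b$ into $X$ blocks of size $a$ and one block of size $b$; classifying each $c$-subset by the $i$-element set of $a$-blocks it meets gives $\binom{aX+b}{c}=\sum_{i=0}^{c}d_i\binom{X}{i}$, where $d_i$ counts the $c$-subsets of a set of size $ai+b$ meeting all $i$ distinguished blocks --- manifestly a nonnegative integer. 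Your difference-operator route buys a recursive description of the $d_i$ (as iterated images under the telescoping), while the combinatorial route buys an explicit closed-form interpretation; both establish the lemma.
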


\begin{lemma}\label{0423lem2}\cite[Lemma 4.9]{LS3}
Let $a,b$ be any nonnegative integers. Then there are nonnegative integers $e_0,...,e_{a+b}$ such that
$$
{X\choose a} {X\choose b} = \sum_{i=0}^{a+b} e_i{X\choose i}
$$for all nonnegative  integers $X$.
\end{lemma}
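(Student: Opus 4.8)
The plan is to prove this combinatorially, since a counting argument yields nonnegativity and integrality simultaneously. First I would isolate the soft part: both sides are polynomials in $X$ of degree $a+b$, and since $\{\binom{X}{i} : 0\le i\le a+b\}$ is a basis of the space of polynomials of degree at most $a+b$, there is a unique expansion $\binom{X}{a}\binom{X}{b}=\sum_{i=0}^{a+b} e_i\binom{X}{i}$ with $e_i\in\mathbb{Q}$. The entire content of the lemma is therefore that these uniquely determined coefficients happen to be nonnegative integers. Moreover, a polynomial identity that holds at all nonnegative integers $X$ holds identically, so it suffices to exhibit nonnegative integers $e_i$ for which the equation is valid at every nonnegative integer $X$.

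Then I would argue by counting. For $X\in\mathbb{Z}_{\ge 0}$ the product $\binom{X}{a}\binom{X}{b}$ is the number of ordered pairs $(A,B)$ of subsets of $\{1,\dots,X\}$ with $|A|=a$ and $|B|=b$. I would partition these pairs according to the size $i=|A\cup B|$ of their union; since $|A\cup B|\le|A|+|B|=a+b$, the index $i$ ranges over $\{0,1,\dots,a+b\}$. For fixed $i$, the number of admissible pairs with $|A\cup B|=i$ factors as $\binom{X}{i}$, the number of ways to choose the union as an $i$-element subset, times the number $e_i$ of pairs $(A,B)$ of subsets of $\{1,\dots,i\}$ satisfying $|A|=a$, $|B|=b$ and $A\cup B=\{1,\dots,i\}$. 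The key point is that this second factor depends only on $a,b,i$ and not on $X$. Summing over $i$ gives precisely $\binom{X}{a}\binom{X}{b}=\sum_{i=0}^{a+b} e_i\binom{X}{i}$ with each $e_i$ a nonnegative integer, which is the claim.

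I do not expect any serious obstacle, as this is a standard fact about products of binomial coefficients. The only two points deserving a sentence of care are that the union size cannot exceed $a+b$, which is what truncates the sum at $i=a+b$, and that $e_i$ is genuinely independent of $X$, which is what licenses passing from validity at infinitely many integer values to the polynomial identity (equivalently, to the basis expansion). If an explicit formula is wanted, one can compute $e_i$ by inclusion–exclusion as $e_i=\sum_{j=0}^{i}(-1)^{i-j}\binom{i}{j}\binom{j}{a}\binom{j}{b}$, which is just the $i$-th finite difference of $\binom{X}{a}\binom{X}{b}$ evaluated at $X=0$; but this closed form is not needed, since the combinatorial description already presents $e_i$ as a cardinality and hence as a nonnegative integer.
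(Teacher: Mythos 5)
Your proof is correct. The paper does not reprove this lemma but simply cites \cite[Lemma 4.9]{LS3}; your self-contained argument — counting ordered pairs $(A,B)$ of subsets of $\{1,\dots,X\}$ with $|A|=a$, $|B|=b$, stratified by $i=|A\cup B|$, so that $e_i$ is the number of such pairs with union equal to $\{1,\dots,i\}$ (explicitly $e_i=\binom{i}{a}\binom{a}{a+b-i}$, a manifestly nonnegative integer) — is the standard way to establish it, and you correctly handle the two points that need care: the truncation at $i=a+b$ and the independence of $e_i$ from $X$, which upgrades the identity from all nonnegative integers $X$ to a polynomial identity.
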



\begin{lemma}\label{0302lem1}\cite[Lemma 4.5]{LS3}
 $$\left\lfloor (A_{n_1-1;1}-\varsigma) \frac{A_{n_1-1;1}}{A_{n_1;1}}\right\rfloor -s_{n_1-2;1}\ge 0.$$
\end{lemma}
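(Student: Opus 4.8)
The plan is to reduce the floor inequality to a clean statement about the weighted partial sums and then to prove that statement by the same mechanism that gives Lemma~\ref{lem 514}. The first point is that, inside the summation where Lemma~\ref{0302lem1} is invoked, the parameter $\varsigma$ is \emph{not} independent of the $\tau_{i;1}$: from the relation $\tau_{n_1-2;1}=A_{n_1-1;1}-r_1s_{n_1-2;1}-\varsigma+s_{n_1-3;1}$ recorded just after (\ref{eq goal2}), together with Lemma~\ref{lem sn} applied at index $n_1-1$, one gets $s_{n_1-1;1}=r_1s_{n_1-2;1}-s_{n_1-3;1}+\tau_{n_1-2;1}=A_{n_1-1;1}-\varsigma$. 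Hence the quantity whose floor is taken is exactly $s_{n_1-1;1}\,A_{n_1-1;1}/A_{n_1;1}$. Since $s_{n_1-2;1}$ is a nonnegative integer and $A_{n_1;1}>0$ in the situation at hand, the bound $\big\lfloor s_{n_1-1;1}A_{n_1-1;1}/A_{n_1;1}\big\rfloor\ge s_{n_1-2;1}$ reduces to
\[s_{n_1-1;1}\,A_{n_1-1;1}\ \ge\ s_{n_1-2;1}\,A_{n_1;1}.\]
Writing $A_{i;1}=\alpha c^{[r_1]}_{i+1}+\beta c^{[r_1]}_i$ with $\alpha,\beta\ge 0$ (the two orderings of $p,q$), the trivial case $\alpha=\beta=0$ makes all $A_{i;1}$ and $s_{i;1}$ vanish, so we may assume $\alpha^2+\beta^2+r_1\alpha\beta>0$.

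This target is the index-shifted analogue of Lemma~\ref{lem 514}, and I would prove it by the same two-step argument. Fix any $(\tau'_0,\dots,\tau'_{n_1-2})\in\mathcal{L}_{\max}(\tau_{0;1},\dots,\tau_{n_1-2;1})$, which is nonempty since the zero sequence lies in $\mathcal{L}$, and write $s'_i$ for its weighted partial sums. Because $s'_{n_1-2}$ uses only the components $\tau'_0,\dots,\tau'_{n_1-3}$, which are bounded above by $\tau_{0;1},\dots,\tau_{n_1-3;1}$ (Definition~\ref{20120121}(1)), we have $s_{n_1-2;1}-s'_{n_1-2}\ge 0$. The second line of condition (\ref{cond502b0303}), read with $n=n_1+1$, gives $(s_{n_1-1;1}-s'_{n_1-1})A_{n_1-2;1}\ge (s_{n_1-2;1}-s'_{n_1-2})A_{n_1-1;1}$, so the left difference is nonnegative as well since $A_{n_1-2;1}>0$. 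Invoking $A_{n_1-1;1}^2>A_{n_1;1}A_{n_1-2;1}$ from Lemma~\ref{negone}(b) (strict once the trivial case is excluded), I would bump up the indices exactly as in Lemma~\ref{lem 514}—dividing through by $s_{n_1-2;1}-s'_{n_1-2}$ when it is positive and treating the vanishing case directly—to obtain
\[(s_{n_1-1;1}-s'_{n_1-1})A_{n_1-1;1}\ \ge\ (s_{n_1-2;1}-s'_{n_1-2})A_{n_1;1}.\]

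It then suffices to prove the primed inequality $s'_{n_1-1}A_{n_1-1;1}\ge s'_{n_1-2}A_{n_1;1}$, since adding it to the previous display yields the target. Here membership $(\tau')\in\mathcal{L}$ supplies an integer $k\ge 0$ with $s'_{n_1-2}=k\,c^{[r_1]}_{n_1-1}$ and $s'_{n_1-1}=k\,c^{[r_1]}_{n_1}$ (Definition~\ref{20120121}(2) with $n=n_1+1$). Expanding $A_{n_1-1;1}=\alpha c^{[r_1]}_{n_1}+\beta c^{[r_1]}_{n_1-1}$ and $A_{n_1;1}=\alpha c^{[r_1]}_{n_1+1}+\beta c^{[r_1]}_{n_1}$ and collecting, the coefficient of $\beta$ is $k\big(c^{[r_1]}_{n_1}c^{[r_1]}_{n_1-1}-c^{[r_1]}_{n_1-1}c^{[r_1]}_{n_1}\big)=0$, while the coefficient of $\alpha$ is $k\big((c^{[r_1]}_{n_1})^2-c^{[r_1]}_{n_1+1}c^{[r_1]}_{n_1-1}\big)=k$ by the final assertion of Lemma~\ref{lem cn}. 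Thus $s'_{n_1-1}A_{n_1-1;1}-s'_{n_1-2}A_{n_1;1}=\alpha k\ge 0$, finishing the proof.

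The step I expect to be most delicate is not any single computation but the bookkeeping of indices: one must be certain that condition (\ref{cond502b0303}) is applied with the correct value $n=n_1+1$, so that the constrained sums really are $s_{n_1-2;1}$ and $s_{n_1-1;1}$ and not their neighbors, and that the identity $A_{n_1-1;1}-\varsigma=s_{n_1-1;1}$ is used in the right place to convert the opaque floor expression into a sum. Once these conventions are pinned down, the argument is a verbatim shift (by one in the index and from the seed-$t$ to the seed-$t_{w_Z}$ data) of the proof of Lemma~\ref{lem 514}.
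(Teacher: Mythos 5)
The paper does not prove this lemma at all: it is imported verbatim as \cite[Lemma 4.5]{LS3}, so there is no in-text argument to compare against. Your proposal supplies a genuine, self-contained proof, and it checks out. The identification $A_{n_1-1;1}-\varsigma=s_{n_1-1;1}$ is exactly the constraint under which the lemma is invoked (it is written explicitly in the summations of Propositions~\ref{prop2.14} and~\ref{cor2.17}, and follows from $\tau_{n_1-2;1}=A_{n_1-1;1}-r_1s_{n_1-2;1}-\varsigma+s_{n_1-3;1}$ together with Lemma~\ref{lem sn}), so the statement does reduce to $s_{n_1-1;1}A_{n_1-1;1}\ge s_{n_1-2;1}A_{n_1;1}$ by integrality of $s_{n_1-2;1}$. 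Your two-step argument — the unprimed-minus-primed inequality coming from the second line of (\ref{cond502b0303}) at $n=n_1+1$ bumped up one index via $A_{n_1-1;1}^2>A_{n_1;1}A_{n_1-2;1}$, plus the exact evaluation $s'_{n_1-1}A_{n_1-1;1}-s'_{n_1-2}A_{n_1;1}=k\alpha\ge 0$ using $(c^{[r_1]}_{n_1})^2-c^{[r_1]}_{n_1+1}c^{[r_1]}_{n_1-1}=1$ — is the correct index-shifted analogue of the proof of Lemma~\ref{lem 514}, and the cancellation of the $\beta$-coefficient is right. The only things left implicit are the positivity facts you lean on ($A_{n_1;1},A_{n_1-1;1},A_{n_1-2;1}>0$ and $c^{[r_1]}_i\ge 0$ for $2\le i\le n_1-1$, the latter needed to get $s_{n_1-2;1}-s'_{n_1-2}\ge 0$ from the termwise bounds); these hold in the regime where the lemma is actually used ($r_1\ge 2$ and $(p,q)\ne(0,0)$), but since Lemma~\ref{0302lem1} is stated unconditionally you should flag that your argument covers exactly that regime. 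What your route buys is that the reader need not consult \cite{LS3}; what it costs is nothing, since the ingredients (Definition~\ref{20120121}, condition (\ref{cond502b0303}), Lemmas~\ref{lem cn} and~\ref{negone}) are all already present in this paper.
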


\begin{lemma}\label{0303lem944}\cite[Lemma 4.6]{LS3}
Let $I$ be a subset of the integers and let $q:I\to \mathbb{R}$ be a function.  Suppose that a polynomial of $x$ of the form $$\sum_{m\in I} q(m)x^{b-m}$$is divisible by $(1+x)^g$ and its quotient has nonnegative coefficients. Let
$$
p(m)=\sum_{i=0}^h d_i {b-m\choose i}
$$ be a polynomial of $m$ with $d_i\geq 0$.
Then  $$\sum_{m\in I} p(m)q(m)x^{b-m}$$is divisible by $(1+x)^{g-h}$ and its quotient has nonnegative coefficients. 
\end{lemma}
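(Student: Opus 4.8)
The plan is to recognize the coefficientwise multiplication by $\binom{b-m}{i}$ as a differential operator acting on $f(x):=\sum_{m\in I}q(m)x^{b-m}$, and to show that this operator lowers the order of vanishing at $x=-1$ by exactly $i$ while preserving nonnegativity of the quotient. First I would introduce, for each $i\ge 0$, the operator $T_i:=\frac{x^i}{i!}\frac{d^i}{dx^i}$. Since $\frac{d^i}{dx^i}x^n=\frac{n!}{(n-i)!}x^{n-i}$, one has $T_i(x^n)=\binom{n}{i}x^n$ for every integer $n$, and hence
$$T_i(f)=\sum_{m\in I}\binom{b-m}{i}q(m)\,x^{b-m}.$$
Using $p(m)=\sum_{i=0}^h d_i\binom{b-m}{i}$ and linearity, I would then write
$$\sum_{m\in I}p(m)q(m)x^{b-m}=\sum_{i=0}^h d_i\,T_i(f),$$
which, since $d_i\ge 0$, reduces the lemma to the following claim: for each $0\le i\le h$ (and assuming $g\ge h$, the natural setting in which $(1+x)^{g-h}$ makes sense), the polynomial $T_i(f)$ is divisible by $(1+x)^{g-i}$ with a quotient having nonnegative coefficients.

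To establish the claim I would write $f=(1+x)^g h$, where $h$ has nonnegative coefficients by hypothesis, and expand $f^{(i)}$ by the Leibniz rule. Since $k\le i\le h\le g$ we have $[(1+x)^g]^{(k)}=\frac{g!}{(g-k)!}(1+x)^{g-k}$, so
$$T_i(f)=(1+x)^{g-i}\cdot\frac{x^i}{i!}\sum_{k=0}^i\binom{i}{k}\frac{g!}{(g-k)!}(1+x)^{i-k}h^{(i-k)}.$$
Every factor appearing on the right has nonnegative coefficients: the constants $\binom{i}{k}$ and $\frac{g!}{(g-k)!}$ are nonnegative, $x^i$ and the powers $(1+x)^{i-k}$ (with $i-k\ge 0$) have nonnegative coefficients, and each derivative $h^{(i-k)}$ has nonnegative coefficients because differentiation sends $\sum_\ell c_\ell x^\ell$ with $c_\ell\ge 0$ to $\sum_\ell \ell c_\ell x^{\ell-1}$. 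Thus the quotient $T_i(f)/(1+x)^{g-i}$ has nonnegative coefficients, proving the claim.

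Finally I would assemble the pieces. For $0\le i\le h$ we have $g-i\ge g-h$, so writing $Q_i:=T_i(f)/(1+x)^{g-i}$ gives $T_i(f)=(1+x)^{g-h}\,(1+x)^{h-i}Q_i$, where $(1+x)^{h-i}Q_i$ has nonnegative coefficients. Summing against the nonnegative weights $d_i$ yields
$$\sum_{m\in I}p(m)q(m)x^{b-m}=(1+x)^{g-h}\sum_{i=0}^h d_i\,(1+x)^{h-i}Q_i,$$
whose quotient $\sum_{i=0}^h d_i(1+x)^{h-i}Q_i$ is manifestly a polynomial with nonnegative coefficients. The computation is essentially routine once the operator $T_i$ is in place; the only point requiring care—the ``hard part'' in spirit—is verifying that nonnegativity is preserved at every stage, which rests on the facts that $g\ge h$ (so that all the factorial constants and the exponents $g-k$ and $i-k$ are nonnegative) and that differentiation preserves coefficientwise nonnegativity.
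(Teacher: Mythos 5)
Your proof is correct and takes the standard route — and, as far as one can tell, the same route as the source: the paper itself gives no proof here but cites \cite[Lemma 4.6]{LS3}, whose argument likewise realizes multiplication of the coefficient of $x^{b-m}$ by $\binom{b-m}{i}$ as the operator $\frac{x^i}{i!}\frac{d^i}{dx^i}$ and then applies the Leibniz rule to $(1+x)^g\cdot\bigl(f/(1+x)^g\bigr)$, using $i\le h\le g$ to keep every factor nonnegative. The only blemish is notational: you reuse the letter $h$ both for the integer bound in $p(m)$ and for the quotient $f/(1+x)^g$; the latter should be renamed.
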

\subsection{Proof of Proposition \ref{cor2.17}}\label{sect 5.3}

\begin{proof}
By Theorem~\ref{thm01312012} and replacing $x_1$ by $x_{d,t_{w_Z}}^{-1}$, we have 
$$
\sum_{\begin{array}{c}\scriptstyle \tau_{0;1},\tau_{1;1},\cdots,\tau_{n_1-2;1}\\ \scriptstyle s_{n_1-1;1}=A_{n_1-1;1}-\varsigma \end{array}}   \left( \prod_{w=0}^{n_1-2}\gchoose{A_{w+1;1} - r_1s_{w;1}}{\tau_{w;1}}   \right){x_{d;t_{\omega_Z}}}^ {A_{n_1-1;1}-r_1s_{n_1-2;1}}
$$
is   divisible by $(1+{x_{d;t_{\omega_Z}}}^{r_1})^{r_1(A_{n_1-1;1}-\varsigma) -A_{n_1;1}}$  in $\mathbb{Z}[{x_{d;t_{\omega_Z}}}^{\pm 1}]$, and the resulting quotient has nonnegative coefficients.
Multiplying the sum with ${x_{d;t_{\omega_Z}}}^{r_1  \left\lfloor (A_{n_1-1;1}-\varsigma) \frac{A_{n_1-1;1}}{A_{n_1;1}}\right\rfloor -A_{n_1-1;1}}$ shows that
\begin{equation}\label{double star1}
\sum_{\begin{array}{c} \scriptstyle \tau_{0;1},\tau_{1;1},\cdots,\tau_{n_1-2;1}\\\scriptstyle  s_{n_1-1;1}=A_{n_1-1;1}-\varsigma\end{array}}   \left( \prod_{w=0}^{n_1-2}\gchoose{A_{w+1;1} - r_1s_{w;1}}{\tau_{w;1}}   \right)({x_{d;t_{\omega_Z}}}^{r_1})^{ \left\lfloor (A_{n_1-1;1}-\varsigma) \frac{A_{n_1-1;1}}{A_{n_1;1}}\right\rfloor -s_{n_1-2;1}}
\end{equation}
is also divisible by $(1+{x_{d;t_{\omega_Z}}}^{r_1})^{r_1(A_{n_1-1;1}-\varsigma) -A_{n_1;1}}$, and the resulting quotient has nonnegative coefficients.
Moreover,  Lemma \ref{0302lem1} above implies that the exponents in the expression (\ref{double star1}) are non-negative,
and, since the divisor has constant term 1, this shows that the quotient is a polynomial.

Note that the statement about the divisibility of (\ref{double star1}) also holds when we replace $(x_{d;t_{\omega_Z}}^r)$ with any other expression $X$. 
We can write the second sum of (\ref{eq2.14}) as follows:
\[ \sum_{\theta>0}x_{e;t'}^{-\theta}\sum_{\varsigma\ge 0} \sum_ \mathfrak{r} \lambda_\mathfrak{r} \mathfrak{r} \sum_{b,m}q(m)p(m)X^{b-m},\]
where
\[
\begin{array}
 {rcl}
q(m) &=& \displaystyle\prod_{w=0}^{n_{\nuisone}-2}\gchoose{A_{w+1;\nuisone} - r_\nuisone s_{w;\nuisone}}{\tau_{w;\nuisone}} \\
& \\
p(m)&=&\displaystyle  \sum_{j=0}^{\sum_{w=1}^{n_2-3} \tau_{w,2} } d_j {{ \left\lfloor (A_{n_\nuisone-1;\nuisone}-\varsigma) \frac{A_{n_\nuisone-1;\nuisone}}{A_{n_\nuisone;\nuisone}}\right\rfloor -s_{n_\nuisone-2;\nuisone}}
 \choose j}  \\
\\
b &=& \left\lfloor (A_{n_\nuisone-1;\nuisone}-\varsigma) \frac{A_{n_\nuisone-1;\nuisone}}{A_{n_\nuisone;\nuisone}}\right\rfloor \\
& \\
m&=& s_{n_\nuisone-2;\nuisone} \\
\\
X&=&\left(\frac{\prod_i x_{i;t'}^{ [b_{i,e}^{t'}]_+ }  }{\prod_i x_{i;t'}^{ [-b_{i,e}^{t'}]_+ } }\right)^{\text{sgn}(2b_{d,e}^{t'}+1)}.
\end{array}\]

Moreover, we can replace the upper bound $\sum_{w=1}^{n_2-3} \tau_{w,2}$ of the sum in $p(m)$ by the larger integer $A_{n_1-2;1}-r_1\varsigma-\theta$ and setting $d_j=0$, whenever   $j>\sum_{w=1}^{n_2-3} \tau_{w,2}$. 
The fact that
 $\sum_{w=1}^{n_2-3} \tau_{w,2} \le A_{n_1-2;1}-r_1\varsigma-\theta$
  follows from equation (\ref{theta}).
 
Using Lemma~\ref{0303lem944}  
with
$g=r_\nuisone(A_{n_\nuisone-1;\nuisone}-\varsigma) -A_{n_\nuisone;\nuisone}$ and 
$h=A_{n_\nuisone-2;\nuisone} - r_\nuisone \varsigma -\theta$, we get that the second sum in the expression  in (\ref{eq2.14}) 
is divisible by 
\begin{equation}\label{divisor1}
\aligned
&\left(1+\left(\frac{\prod_i x_{i;t'}^{ [b_{i,e}^{t'}]_+ }  }{\prod_i x_{i;t'}^{ [-b_{i,e}^{t'}]_+ } }\right)^{\text{sgn}(2b_{d,e}^{t'}+1)}\right)^{r_\nuisone(A_{n_\nuisone-1;\nuisone}-\varsigma) -A_{n_\nuisone;\nuisone}-(A_{n_\nuisone-2;\nuisone} - r_\nuisone \varsigma -\theta)}\\
&\stackrel{\small \textup Lemma~ \ref{negone}}{=}\left(1+\left(\frac{\prod_i x_{i;t'}^{ [b_{i,e}^{t'}]_+ }  }{\prod_i x_{i;t'}^{ [-b_{i,e}^{t'}]_+ } }\right)^{\text{sgn}(2b_{d,e}^{t'}+1)}\right)^\theta,\endaligned\end{equation} and the resulting quotient has nonnegative coefficients.
Finally, dividing (\ref{divisor1}) by $x_{e;t'}^\theta$ and using the fact that  
\[ \widetilde{\widetilde{x_{e;t}}} =\left.\left(\prod_i x_{i;t'}^{ [b_{i,e}^{t'}]_+ } +\prod_i x_{i;t'}^{ [-b_{i,e}^{t'}]_+ } \right)\right/x_{e;t'}\]
we see that the second sum in \eqref{eq2.14} is divisible by 
 $\widetilde{\widetilde{x_{e;t}}}^\theta$.
This completes the proof of Proposition~\ref{cor2.17}(1). Part (2) can be proved in a similar way using Proposition~\ref{prop2.16}.\end{proof}

\section{Application to quiver Grassmannians}\label{sect Grass}

Let $(Q,S) $ be a quiver with potential and
 let $M$ be an indecomposable representation of  $(Q,S)$ which is obtained by a mutation sequence starting from a negative simple representation,  see \cite[Section 5]{DWZ2}. Let $\textup{Gr}_{\mathbf{e}} M$ denote the Grassmannian of subrepresentations of $M$ of dimension vector $\mathbf{e}$.
 \begin{theorem} 
  The Euler-Poincar\'e characteristic of $\textup{Gr}_{\mathbf{e}} M$ is non-negative.
\end{theorem}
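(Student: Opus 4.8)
The plan is to deduce the statement directly from the main positivity result, Theorem~\ref{11192011thm}, through the cluster character formula that expresses cluster variables in terms of Euler--Poincar\'e characteristics of quiver Grassmannians. First I would recall the formula of \cite{FK,DWZ2} (generalizing \cite{CC,CK1}): under the hypothesis that $M$ is obtained from a negative simple representation by a mutation sequence, the decorated representation $M$ is reachable, so it corresponds to a genuine cluster variable $x_M$ of the cluster algebra $\mathcal{A}(Q)$ associated with $(Q,S)$, and the $F$-polynomial of $x_M$ is
\[
F_M(y_1,\dots,y_N)=\sum_{\mathbf{e}}\chi(\textup{Gr}_{\mathbf{e}}M)\,y^{\mathbf{e}}.
\]

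To isolate the individual characteristics I would work in the cluster algebra with \emph{principal coefficients}, i.e.\ with $\mathbb{P}=\textup{Trop}(y_1,\dots,y_N)$, which is of geometric type and hence covered by Theorem~\ref{11192011thm}. By Fomin--Zelevinsky's separation of addition formula \cite[Theorem 3.7]{FZ4}, the Laurent expansion of $x_M$ in the initial cluster $\mathbf{x}=(x_1,\dots,x_N)$ has the shape
\[
x_M=\prod_{i=1}^N x_i^{g_i}\cdot F_M(\hat{y}_1,\dots,\hat{y}_N),\qquad \hat{y}_j=y_j\prod_i x_i^{b_{ij}}.
\]
Expanding this product, the contribution indexed by a dimension vector $\mathbf{e}$ is $\chi(\textup{Gr}_{\mathbf{e}}M)\,y^{\mathbf{e}}\prod_i x_i^{g_i+\sum_j b_{ij}e_j}$. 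Since $\textup{Trop}(y_1,\dots,y_N)$ is freely generated, the assignment $\mathbf{e}\mapsto y^{\mathbf{e}}$ is injective, so collecting terms by their $y$-degree shows that the coefficient of the semifield element $y^{\mathbf{e}}\in\mathbb{P}$ in this Laurent polynomial is precisely $\chi(\textup{Gr}_{\mathbf{e}}M)$ times a single Laurent monomial in $\mathbf{x}$.

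Now I would apply Theorem~\ref{11192011thm} to the cluster variable $x_M$ with respect to the cluster $\mathbf{x}$: its Laurent expansion has coefficients that are non-negative integer linear combinations of elements of $\mathbb{P}$. Comparing with the previous display, the integer $\chi(\textup{Gr}_{\mathbf{e}}M)$ appears as the $\mathbb{Z}$-coefficient of the basis element $y^{\mathbf{e}}$ of $\mathbb{P}$, hence must be non-negative, which is the assertion. The only real point to verify carefully---and the step I expect to be the sole obstacle---is the bookkeeping guaranteeing that each $\chi(\textup{Gr}_{\mathbf{e}}M)$ is genuinely separated as the coefficient of a \emph{single} element $y^{\mathbf{e}}\in\mathbb{P}$ attached to a single $\mathbf{x}$-monomial, so that no two distinct dimension vectors can cancel or superpose their contributions. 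Once the $y$-grading is tracked honestly in the principal-coefficient setting this separation is automatic, and no further information about the geometry of the Grassmannians is required.
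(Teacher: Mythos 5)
Your proof is correct and follows the same route as the paper: invoke the $F$-polynomial formula of \cite{DWZ2} expressing the coefficients as Euler--Poincar\'e characteristics, and then apply Theorem~\ref{11192011thm} in the principal-coefficient (geometric type) setting to conclude non-negativity. The paper's proof is a two-line version of exactly this argument; your extra bookkeeping with the tropical semifield and the separation formula only makes explicit why each $\chi(\textup{Gr}_{\mathbf{e}}M)$ is isolated as the coefficient of the basis element $y^{\mathbf{e}}\in\mathbb{P}$.
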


\begin{proof}  It has been shown in 
 \cite{DWZ2}  that $M$ corresponds to a cluster variable whose $F$-polynomial  is equal to a sum of monomials whose  coefficients are given by the  Euler-Poincar\'e characteristic of $\textup{Gr}_{\mathbf{e}} (M)$.
 Theorem~\ref{11192011thm} implies that these coefficients are non-negative.
\end{proof}

\end{document}